\newtheorem{pro}{Proposition}[section]
\newtheorem{teo}[pro]{Theorem}
\newtheorem{defi}[pro]{Definition}
\newtheorem{lem}[pro]{Lemma}
\newtheorem{cor}[pro]{Corollary}
\newtheorem{rk}[pro]{Remark}
\newtheorem{ex}[pro]{Example}
\newcommand{\Ext}{\mathrm{Ext}}
\newcommand{\Hom}{\mathrm{Hom}}
\newcommand{\End}{\mathrm{End}}
\newcommand{\T}{\mathcal{T}}
\newcommand{\C}{\mathcal{C}}
\newcommand{\X}{\mathcal{X}}
\newcommand{\Y}{\mathcal{Y}}
\newcommand{\Z}{\mathcal{Z}}
\newcommand{\pd}{\mathrm{pd}}
\newcommand{\inj}{\mathrm{inj}}
\newcommand{\gldim}{\mathrm{gl.dim}}
\newcommand{\ind}{\mathrm{ind}}
\newcommand{\rad}{\mathrm{rad}}
\newcommand{\add}{\mathrm{add}}
\newcommand{\smd}{\mathrm{smd}}
\newcommand{\gen}{\mathrm{gen}}
\newcommand{\Ker}{\mathrm{Ker}}
\newcommand{\Ima}{\mathrm{Im}}
\newcommand{\modu}{\mathrm{mod}}
\newcommand{\homoX}{\mathsf{K}^b(\X)}
\newcommand{\proj}{\mathrm{proj}}
\begin{document}

\title[Relative Torsion Classes]{Relative Torsion Classes, relative tilting and relative silting modules}
\author{Luis Mart\'inez and Octavio Mendoza}
\thanks{2020 {\it{Mathematics Subject Classification}}. Primary 16G10; 18G05. Secondary 16E10.\\
{\it Keywords.} Auslander-Reiten theory; relative homological algebra; relative torsion classes; relative tilting and relative presilting modules.\\
The authors thanks project PAPIIT-Universidad Nacional Aut\'onoma de M\'exico IN100520.}

\begin{abstract} Let $\Lambda$ be an Artin algebra. In 2014, T. Adachi, O. Iyama and I. Reiten proved that the torsion funtorially finite classes in $\modu\,(\Lambda)$ can be described by the $\tau$-tilting theory.  The aim of this paper is to introduce the notion of $F$-torsion class in $\modu\,(\Lambda)$, where $F$ is an additive subfunctor of $\Ext^1_\Lambda,$ and to characterize when these clases are preenveloping and $F$-preenveloping. In order to do that, we introduce the notion of $F$-presilting $\Lambda$-module. The latter is both a generalization of $\tau$-rigid and $F$-tilting in $\modu(\Lambda).$
\end{abstract}  
\maketitle

\setcounter{tocdepth}{1}
\tableofcontents

\section{Introduction}

Throughout this paper $\Lambda$ will be an Artin $R$-algebra. We denote by $\modu(\Lambda)$ the category of finitely generated left $\Lambda$-modules and $\proj(\Lambda)$ (respectively, $\inj(\Lambda))$ denotes the category of finitely generated projective (respectively, injective) left $\Lambda$-modules. Following \cite{AR75}, we have the usual duality functor $D_\Lambda:\modu(\Lambda)\to \modu(\Lambda^{op}),$ where $D_\Lambda:=\Hom_R(-,k)$ and $k$ is the injective envelope of $R/\rad(R),$ the duality functor $*:=\Hom_\Lambda(-,\Lambda):\proj(\Lambda)\to\proj(\Lambda^{op})$ and the Nakayama equivalence $\nu:\proj(\Lambda)\to\inj(\Lambda),$ where $\nu(P):=D_{\Lambda^{op}}(P^*).$ For any $M\in\modu(\Lambda)$, we denote by $\gen(M)$ the epimorphic images of  finite direct sums of copies of $M,$ and by $\add(M)$ the class of all the direct summands of finite direct sums of copies of $M.$
\

The study of relative homological algebra over an Artin algebra $\Lambda$ is due to M. Auslander and \O. Solberg \cite{AS93, AS93II}. It has been very useful for studying relative tilting modules through a connection with partial tilting modules and his complemments \cite{AS93III}. In those papers, Auslander and Solberg found more general methods that allow us the construction of new subcategories that are contravariantly finite and functorially finite. It is also given a generalization to the relative context of the theory of tilting and cotilting modules giving new relationships between algebras.
\

All the subfunctors $F$ of $\Ext^1_\Lambda$ to be considered in this paper will be additive. We write $F\subseteq \Ext^1_\Lambda$ meaning that $F$ is an additive subfunctor of $\Ext^1_\Lambda.$
\

For a given  $F\subseteq \Ext^1_\Lambda,$ an exact sequence $\eta: 0\to A\to B\to C\to 0$ is called {\bf $F$-exact} if $\eta\in F(C,A).$
\

An {\bf additive generator} in $\modu(\Lambda)$ is a $\Lambda$-module $X$ such that $\gen(X)=\modu(\Lambda).$  Following  \cite{AS93}, for a given class $\X\subseteq\modu(\Lambda),$ there exists $F_\X\subseteq\Ext^1_\Lambda$ which is determined by all the exacts sequences $\eta: 0\to A\to B\to C\to0$ such that $\mathrm{Hom}_\Lambda(X,\eta)$ is an exact sequence $\forall$ $X\in\X.$
\

Let $\C$ be an additive category and $\X$ be a class of objects in $\C.$ Following \cite{AS80}, it is said that a morphism $f:X\rightarrow C$ is  an $\X$-{\bf{precover}} if $X\in\X$ and $\mathrm{Hom}_\C(Z,f):\mathrm{Hom}_\C(Z,X)\rightarrow\mathrm{Hom}_\C(Z,C)$ is surjective, for any $Z\in\X$. The class $\X$ is {\bf{precovering}} if every $C\in\C$ admits a $\X$-precover. Dually, we have the notion of $\X$-{\bf{preenvelope}} and {\bf preenveloping} class. If $\X$ is both precovering and preenveloping it is called {\bf functorially finite}.

In case that $\C=\modu(\Lambda),$ Auslander and Solberg defined, for  $F\subseteq\Ext^1_\Lambda,$ the notion of an {\bf $(\X,F)$-preenvelope} of $M,$ which is an $F$-exact sequence $\eta: 0\to M\overset{f}{\to}X\to N\to0$ such that $f:M\to X$ is an $\X$-preenvelope. It is said that $\X$ is {\bf $F$-preenveloping} if each $M\in\modu(\Lambda)$ admits an $(\X,F)$-preenvelope.
\

Tilting theory started as a module-theoretic interpretation of the reflection functors introduced by I. N. Berstein, I. M. Gelfand and V. A. Ponomarev in the early 1970s. These functors were categorized in $\modu(\Lambda)$ by Brenner and Butler \cite{BB80}, through the tilting modules. When $T$ is a tilting module, there is an associated torsion pair $(\mathcal{T},\mathcal{F})$, where $\mathcal{T}=\gen(T).$ Moreover,  for $\Gamma:=\mathrm{End}_\Lambda(T)^{op},$ we have the faithful functors $\mathrm{Hom}_\Lambda(T,-):\mathcal{T}\to\mathrm{mod}(\Gamma)$ and $\Ext^1_\Lambda(T,-):\mathcal{F}\to\mathrm{mod}(\Gamma).$ The above can be seen as a generalization of Morita equivalence \cite{Mo58}, which is recovered if $T$ is a progenerator. In the case that $\mathcal{T}$ is a funtorially finite torsion class, it is proved in \cite{AIR14}  that there is a support $\tau$-tilting module $M$ such that $\mathcal{T}=\gen(M),$ where $\tau$ is the Auslander-Reiten translation functor \cite{AR75}. Moreover, this assignment defines a bijection between functorially finite torsion classes and support 
$\tau$-tilting modules.
\

Consider a class $\X\subseteq\modu(\Lambda).$ The right orthogonal class of $\X$ is $\X^\perp:=\cap_{i>0}\,\X^{\perp_i},$ where $\X^{\perp_i}:=\{M\in\modu(\Lambda)\;|\;\Ext^i_\Lambda(-,M)|_{\X}=0\}$ is the $i$-th right orthogonal class of $\X,$ for each positive integer $i.$ Dually, we have the left orthogonal class ${}^{\perp}\X$ of $\X$ and the $i$-th left orthogonal class ${}^{\perp_i}\X$ of $\X.$ Following \cite{BB80}, it is said that $T\in\modu(\Lambda)$ is {\bf tilting} if  $\pd(T)\leq 1,$ $T\in T^\perp$ and there is an exact sequence $0\to {}_\Lambda\Lambda\to T_0\to T_1\to 0$ with $T_0,T_1\in\add(T).$
\

 Let $F\subseteq\Ext^1_\Lambda$ be with enough projectives and injectives (see definition in Section 2). In \cite{AS93}, it is defined the relative Ext bifunctor 
 $ \Ext^i_F(-,-),$  for any $i\geq 0,$ and the relative projective dimension $\pd_F(M),$ for any $M\in\modu(\Lambda).$   The {\bf $F$-global dimension} $\gldim_F(\Lambda)$ of 
 $\Lambda$
 is the supremum of all the $\pd_F(M),$ where $M\in\modu(\Lambda).$ 
 \
 
 For a given 
 $\X\subseteq\modu(\Lambda),$  the $F$-right orthogonal class $\X^{{}_F\perp}:=\cap_{i>0}\,\X^{{}_F\perp_i}$ of $\X,$   where 
 $\X^{{}_F\perp_i}:=\{M\in\modu(\Lambda)\;|\;\Ext^i_F(-,M)|_{\X}=0\}$ is the $i$-th $F$-right orthogonal class of $\X,$ for each positive integer $i.$ Following \cite{AS93II}, it is said that $T\in\modu(\Lambda)$ is {\bf F-tilting} if $\pd_F(T)\leq 1,$ $T\in T^{_F\perp}$ and, for any $Q\in\mathcal{P}(F),$  there is an $F$-exact sequence $0\to Q\to T_0\to T_1\to 0$ with $T_0,T_1\in\add(T).$ We recall that $\mathcal{P}(F)$ is the class of all the $F$-projective $\Lambda$-modules (see definition in Section 2).
\

As we have seen above, the case when $F\subseteq\Ext^1_\Lambda$ has enough projectives and injectives is quite relevant. Note that, if $X$ is an additive generator in $\modu(\Lambda),$ it follows from Lemma \ref{FP-FI} (a) that $F:=F_{\add(X)}$ has enough projectives and injectives. 
\

M. Hoshino  and S. Smal\o   ${}$ proved the following result characterizing the torsion preenveloping classes (containing  the injectives) 
through the tilting modules.

\begin{teo}\cite{Ho82, Sm84} For an Artin algebra $\Lambda$ and $\mathcal{T}\subseteq\modu(\Lambda),$ the following statements are equivalent.
\begin{itemize}\item[(a)] $\mathcal{T}$ is a preenveloping torsion class such that $\inj(\Lambda)\subseteq\mathcal{T}.$
\item[(b)] There is a tilting module $T\in\modu(\Lambda)$ such that $\mathcal{T}=\gen(T).$
\end{itemize}\end{teo}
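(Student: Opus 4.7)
The plan is to prove both implications separately; (b)$\Rightarrow$(a) reduces to a citation of standard tilting theory, while all the real substance lies in (a)$\Rightarrow$(b).

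For (b)$\Rightarrow$(a), given a tilting module $T$, the hypothesis $\pd(T)\le 1$ yields the identity $\gen(T)=T^{\perp_1}$, which is automatically closed under extensions and hence a torsion class; closure under quotients is built into the definition of $\gen(T)$. Containment $\inj(\Lambda)\subseteq\gen(T)$ is immediate because $\Ext^1_\Lambda(T,I)=0$ for every injective $I$, and the preenveloping property of $\gen(T)$ follows from the standard tilting approximation built out of the defining sequence $0\to{}_\Lambda\Lambda\to T_0\to T_1\to 0$.

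For (a)$\Rightarrow$(b), I start by taking a $\mathcal{T}$-preenvelope $f:{}_\Lambda\Lambda\to T_0$. Fixing an injective embedding $\iota:{}_\Lambda\Lambda\hookrightarrow I$ with $I\in\inj(\Lambda)\subseteq\mathcal{T}$, the preenveloping property forces $\iota$ to factor through $f$, so $f$ is a monomorphism. Setting $T_1:=\Coker(f)$ and using closure of $\mathcal{T}$ under quotients, I obtain $T_1\in\mathcal{T}$ and an exact sequence
\[
0\to{}_\Lambda\Lambda\xrightarrow{f}T_0\to T_1\to 0
\]
with $T_0,T_1\in\mathcal{T}$. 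My candidate tilting module is $T:=T_0\oplus T_1$, which satisfies the exact-sequence requirement of the definition by construction.

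It then remains to verify $T\in T^\perp$, the dimension bound $\pd(T)\le 1$, and finally $\gen(T)=\mathcal{T}$. Applying $\Hom_\Lambda(-,N)$ to the sequence above for $N\in\mathcal{T}$, and combining surjectivity of $\Hom_\Lambda(T_0,N)\to\Hom_\Lambda(\Lambda,N)$ (preenvelope property) with $\Ext^{\ge 1}_\Lambda(\Lambda,N)=0$, I first obtain a canonical isomorphism $\Ext^1_\Lambda(T_1,N)\cong\Ext^1_\Lambda(T_0,N)$ for every $N\in\mathcal{T}$. To upgrade this to absolute vanishing, I plan to iterate the preenvelope construction: the hypothesis $\inj(\Lambda)\subseteq\mathcal{T}$ guarantees for every $M\in\modu(\Lambda)$ that the $\mathcal{T}$-preenvelope $M\to T^M$ is monic with cokernel in $\mathcal{T}$, and feeding these "coresolutions in $\mathcal{T}$" back into the Ext long exact sequences is meant to force $\Ext^1_\Lambda(T,\mathcal{T})=0$ and $\Ext^2_\Lambda(T,-)=0$, yielding both $T\in T^\perp$ and $\pd(T)\le 1$. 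The inclusion $\gen(T)\subseteq\mathcal{T}$ is then automatic from $T\in\mathcal{T}$ and the torsion-class axioms, while the reverse inclusion $\mathcal{T}\subseteq T^{\perp_1}=\gen(T)$ follows from the Ext-vanishing just proved.

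The principal obstacle I anticipate is converting the relative isomorphism $\Ext^1_\Lambda(T_0,N)\cong\Ext^1_\Lambda(T_1,N)$ into absolute vanishing: a single application of the preenvelope sequence only transfers $\Ext^1$ between $T_0$ and $T_1$, so the argument has to exploit $\inj(\Lambda)\subseteq\mathcal{T}$ to terminate an inductive shifting of Ext along injective coresolutions. This delicate bookkeeping is where the heart of the proof lies; the remaining diagram chases and torsion-class manipulations are routine by comparison.
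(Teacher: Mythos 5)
First, a point of comparison: the paper does not prove this theorem at all --- it is quoted from Hoshino and Smal\o{} --- but it does prove the relative generalization (Theorem \ref{NHo83}), whose proof specializes to this statement when $X={}_\Lambda\Lambda$. Measured against that argument, your direction (b)$\Rightarrow$(a) is essentially the standard one and is fine in outline. The direction (a)$\Rightarrow$(b), however, has a genuine gap exactly where you suspect it, and the repair you propose does not close it.

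The missing step is the vanishing $\Ext^1_\Lambda(T_0,\mathcal{T})=0$, and as written your construction cannot even produce a correct $T$: preenvelopes are only determined up to adding arbitrary summands from $\mathcal{T}$, so the $\mathcal{T}$-preenvelope $f:\Lambda\to T_0$ may be chosen with $T_0={}_\Lambda\Lambda\oplus Y$ for some $Y\in\mathcal{T}$ with $\Ext^1_\Lambda(Y,Y)\neq 0$ (take $\mathcal{T}=\modu(\Lambda)$ over any non-hereditary algebra), and then $T=T_0\oplus T_1$ is not rigid. So some minimality or normalization of $T_0$ is indispensable, not optional bookkeeping. Moreover, your long exact sequence only yields $\Ext^1_\Lambda(T_1,N)\cong\Ext^1_\Lambda(T_0,N)$, which shuttles the problem between the two summands without killing either, and the ``iterate coresolutions in $\mathcal{T}$'' idea is circular rather than inductive: from $0\to N\to I\to N'\to 0$ with $I$ injective and $N'\in\mathcal{T}$ one gets $\Ext^1_\Lambda(T_0,N)\cong\mathrm{Coker}\bigl(\Hom_\Lambda(T_0,I)\to\Hom_\Lambda(T_0,N')\bigr)$, and showing that this cokernel vanishes is precisely an instance of the statement being proved. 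Two known ways to supply the missing vanishing are: (i) take $f$ to be the \emph{minimal} left $\mathcal{T}$-approximation and apply Wakamatsu's lemma (closure of $\mathcal{T}$ under extensions gives $\Ext^1_\Lambda(T_1,\mathcal{T})=0$, after which your isomorphism handles $T_0$); or (ii) the route of Theorem \ref{NHo83}: first write $\mathcal{T}=\gen(T'')$ with $T''$ gen-minimal (Propositions \ref{NAS80, 4.6} and \ref{p1.1, NAS81, 5.1}) and apply the Auslander--Smal\o{} radical/Nakayama argument (Proposition \ref{NAS81, 5.1}) to get $T''\in{}^{\perp_1}\mathcal{T}$, then choose the preenvelope $T_0$ of $\Lambda$ inside $\add(T'')$. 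One of these ingredients must be added; once $\Ext^1_\Lambda(T,\mathcal{T})=0$ is in hand, your remaining steps ($\pd(T)\le 1$ via $0\to M\to I\to L\to 0$ with $L\in\mathcal{T}$, and $\gen(T)=\mathcal{T}$) do go through.
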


The first main result of this paper is the relative version of the above theorem. In order to state such result, we need firstly to give the notions of $\gen_F(T)$ and $F$-torsion class, for some  $F\subseteq\Ext^1_\Lambda.$ In order to do that, for a class $\X\subseteq\modu(\Lambda),$ we denote by  $\X^\oplus $ the class of all the $\Lambda$-modules which are finite direct sums of copies of objects in   
$\X.$ 

\begin{defi} Let $\mathcal{T}\subseteq\mathrm{mod}(\Lambda)$ and $F\subseteq\Ext^1_\Lambda.$
\begin{itemize}
\item[$\mathrm{(a)}$] $g: L\to N$ $(respectively$, $f: M\to L)$ in $\modu(\Lambda)$ is {\bf F-epic} $(respectively,$ {\bf F-monic}$)$ if there is an $F$-exact sequence $0\to M\overset{f}{\to}L\overset{g}{\to}N\to0.$ In such a case, it is said that $N$ is an {\bf F-quotient} of $L$ $(respectively$, $M$ is an {\bf F-subobject} of $L).$
\item[$\mathrm{(b)}$] $\mathcal{T}$ is an {\bf F-torsion class} if $\mathcal{T}$ is closed under $F$-extensions and $F$-quotients.
\item[$\mathrm{(c)}$] $\gen_F(\mathcal{T})$ is the class of all the $Z\in\modu(\Lambda)$ for which there is an $F$-exact sequence $0\to K\to Y\to Z\to0$ with $Y\in\mathcal{\T}^\oplus.$
\end{itemize}\end{defi}

Now, we are ready to state the first main result of this paper which is the following one, for a proof see in Section 3. We recall that $M\in\modu(\Lambda)$ is {\bf basic} if $M\neq 0$ and for a decomposition into indecomposables $M=\oplus_{i=1}^n\,M_i$ we have that all the $M_1,M_2,\cdots, M_n$ are pairwise non isomorphic. 

\begin{teo}\label{NHo83}  For an additive generator $X$ in $\modu(\Lambda),$  $F:=F_{\mathrm{add}(X)}$ and 
$\mathcal{T}\subseteq\modu(\Lambda),$ the following statements are equivalent.
\begin{itemize}
\item[$\mathrm{(a)}$] The class $\mathcal{T}$  is $F$-preenveloping and  $F$-torsion. 
\item[$\mathrm{(b)}$] There is an $F$-tilting $T\in\modu(\Lambda)$ such that $\mathcal{T}=\mathrm{gen}_F(T).$
\end{itemize}
Moreover, the map
$T\mapsto\mathrm{gen}_F(T)$ is a bijection between the iso-classes of basic  $F$-tilting $\Lambda$-modules and the classes $\mathcal{T}\subseteq\modu(\Lambda)$ which are $F$-torsion and $F$-preenveloping.
\end{teo}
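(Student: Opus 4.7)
The plan is to prove the two implications separately and then derive the bijection, with the bulk of the work concentrated in (a) $\Rightarrow$ (b).

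For (a) $\Rightarrow$ (b), let $Q_1,\dots,Q_n$ be representatives of the iso-classes of indecomposable $F$-projectives (finitely many since $\Lambda$ is Artin). Using that $\mathcal{T}$ is $F$-preenveloping, for each $Q_i$ I pick an $(\mathcal{T},F)$-preenvelope $0\to Q_i\overset{f_i}{\to} T^0_i\to T^1_i\to 0$. The $F$-torsion hypothesis forces $T^1_i\in\mathcal{T}$, as it is an $F$-quotient of $T^0_i\in\mathcal{T}$. I take $T$ to be the basic module whose indecomposable summands coincide with those of $\bigoplus_{i=1}^n(T^0_i\oplus T^1_i)$. The $F$-exact sequence in the $F$-tilting definition is present by construction for every $Q\in\mathcal{P}(F)$, so two conditions remain: $\pd_F(T)\leq 1$ and $T\in T^{{}_F\perp}$. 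The first follows since $\pd_F(T^1_i)\leq 1$ from its one-step resolution by $Q_i$, and the long exact $\Ext^\bullet_F$-sequence propagates this to $\pd_F(T^0_i)\leq 1$. The second is the delicate step: applying $\Hom_\Lambda(-,Z)$ to the preenvelope sequence for $Z\in\mathcal{T}$ and combining the surjectivity $\Hom_\Lambda(T^0_i,Z)\twoheadrightarrow\Hom_\Lambda(Q_i,Z)$ with $\Ext^1_F(Q_i,Z)=0$ produces an isomorphism $\Ext^1_F(T^1_i,Z)\cong\Ext^1_F(T^0_i,Z)$. To kill these, take any $F$-extension $0\to Z\to E\to T^0_i\to 0$; $F$-torsion closure gives $E\in\mathcal{T}$, $F$-projectivity of $Q_i$ lifts $f_i$ to a map $Q_i\to E$, and the preenvelope universality of $f_i$ factors this lift through $T^0_i$; a bookkeeping argument combining both factorizations and the corresponding statement for $T^1_i$ yields a splitting, hence $\Ext^1_F(T,T)=0$. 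Finally, $\gen_F(T)=\mathcal{T}$: the inclusion $\subseteq$ follows from $T\in\mathcal{T}$ and $F$-torsion closure; for the reverse, given $M\in\mathcal{T}$, take an $F$-exact $F$-projective presentation $0\to K\to Q\to M\to 0$ and pushout against the $F$-tilting sequence of $Q$ to realize $M$ as an $F$-quotient of a module in $\add(T)$.

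For (b) $\Rightarrow$ (a), set $\mathcal{T}:=\gen_F(T)$ for an $F$-tilting $T$. Closure of $\mathcal{T}$ under $F$-extensions and $F$-quotients follows from routine splicing of $F$-exact sequences using $\pd_F(T)\leq 1$ and $\Ext^1_F(T,T)=0$. For the $F$-preenveloping property, given $M\in\modu(\Lambda)$, take an $F$-exact $F$-projective cover $0\to K\to Q\to M\to 0$ with $Q\in\mathcal{P}(F)$, apply the $F$-tilting sequence $0\to Q\to T_0\to T_1\to 0$ with $T_0,T_1\in\add(T)$, and form the pushout along $Q\to M$ to produce an $F$-exact sequence $0\to M\to U\to T_1\to 0$ with $U,T_1\in\gen_F(T)$; the $(\gen_F(T),F)$-preenveloping universality is then checked using $\Ext^1_F(T,T)=0$.

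For the bijection, requiring each $(\mathcal{T},F)$-preenvelope $f_i$ in the construction of $T$ to be minimal (available over an Artin algebra) makes $T$ basic and uniquely determined by $\mathcal{T}$. Conversely, starting from a basic $F$-tilting $T$ and forming $\mathcal{T}=\gen_F(T)$, the minimal preenvelopes of the indecomposable $F$-projectives in $\mathcal{T}$ reconstruct $T$ up to isomorphism, giving the inverse correspondence.

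I expect the main obstacle to be the vanishing $\Ext^1_F(T,T)=0$ in (a) $\Rightarrow$ (b): the preenvelope property alone only yields an isomorphism $\Ext^1_F(T^1_i,Z)\cong\Ext^1_F(T^0_i,Z)$ and does not directly force either side to vanish. The actual vanishing requires a careful splitting argument intertwining $F$-projectivity of $Q_i$ (to produce lifts), the universal property of the preenvelope $f_i$ (to produce factorizations in $\mathcal{T}$), and $F$-extension closure of $\mathcal{T}$ (to ensure the middle term of the extension stays in $\mathcal{T}$), so that every $F$-extension $0\to Z\to E\to T^j_i\to 0$ with $Z\in\mathcal{T}$ can be split.
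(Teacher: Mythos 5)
Your construction of $T$ from $(\mathcal{T},F)$-preenvelopes $0\to Q_i\to T^0_i\to T^1_i\to 0$ of the indecomposable $F$-projectives is essentially the paper's (Proposition \ref{NAS80, 4.6}(b1) applied after reducing to $\mathcal{T}=\gen_F(T'')$), and your route to $\Ext^1_F(T,\mathcal{T})=0$ is genuinely different: the paper obtains $\Ext^1_F(\overline{T},\mathcal{T})=0$ from the $\gen_F$-minimality machinery (Propositions \ref{p1.1, NAS81, 5.1} and \ref{NAS81, 5.1}, a radical/Nakayama argument), whereas you split an arbitrary $F$-extension $0\to Z\to E\xrightarrow{\pi} T^0_i\to 0$ directly. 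That splitting argument does work, but only if $f_i$ is \emph{left minimal}: from the lift $g:Q_i\to E$ of $f_i$ and the factorization $hf_i=g$ you only get $(\pi h)f_i=f_i$, and without minimality $\pi h$ need not be an automorphism, so no splitting follows. You invoke minimality only later, for the bijection; it must be invoked here, and you should check that the left-minimal version of an $(\mathcal{T},F)$-preenvelope is still $F$-monic (it is, by Lemma \ref{Sa12, 1.10}(b)). With that repair this step is correct and arguably cleaner than the paper's.

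The genuine gap is the verification of $\pd_F(T)\leq 1$. You claim $\pd_F(T^1_i)\leq 1$ ``from its one-step resolution by $Q_i$,'' but $0\to Q_i\to T^0_i\to T^1_i\to 0$ is not an $F$-projective resolution: $T^0_i$ lies in $\mathcal{T}$, not in $\mathcal{P}(F)=\add(X)$, so this sequence gives no bound on $\pd_F(T^1_i)$; moreover the long exact sequence only yields $\Ext^2_F(T^1_i,-)\cong\Ext^2_F(T^0_i,-)$, so the two bounds are equivalent and neither is established. The actual proof needs the hypothesis that $\mathcal{T}$ is $F$-preenveloping in its full strength, via $\mathcal{I}(F)\subseteq\mathcal{T}$ (Lemma \ref{FP-FI}(c)): for any $M$ one takes an $F$-exact $0\to M\to I\to L\to 0$ with $I\in\mathcal{I}(F)\subseteq\mathcal{T}$, deduces $L\in\mathcal{T}$ by closure under $F$-quotients, and kills $\Ext^2_F(T,M)$ between $\Ext^1_F(T,L)=0$ and $\Ext^2_F(T,I)=0$, using Lemma \ref{FP-FI}(d). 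Your proposal never uses $\mathcal{I}(F)\subseteq\mathcal{T}$, and Example \ref{Ejem4} shows the conclusion fails for classes that are merely preenveloping, so this cannot be patched without that ingredient. Two smaller points: the pushout of the tilting sequence of $Q$ along $Q\to M$ exhibits $M$ as an $F$-\emph{subobject}, not an $F$-quotient, of a module in $\gen_F(T)$ (for $\mathcal{T}\subseteq\gen_F(T)$ you should instead factor the $F$-epic $Q\to M$ through the preenvelope $Q\to T_0$ and apply Lemma \ref{Sa12, 1.10}(f1)); and the injectivity of $T\mapsto\gen_F(T)$ is not justified by ``reconstruction from minimal preenvelopes'' — the paper instead uses $\gen_F(T)=T^{{}_F\perp_1}$ (Lemma \ref{l1, NHo83}) to split an $\add(T)$-precover of $S$ and conclude $S\in\add(T)$.
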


The concept of triangulated category, introduced by J. L. Verdier in \cite{Verdier}, has been very useful for studying the category $\modu(\Lambda).$ This is because certain triangulated categories become quite accessible if one applies the methods of representation theory  \cite{Ha88}. The most famous example \cite{Ha88} of a triangulated category is the derived category $\mathsf{D}^b(\modu(\Lambda))$ of bounded cochain complexes over $\modu(\Lambda)$ and can be identified with the homotopy category $\mathsf{K}^{-,b}(\proj(\Lambda))$ of bounded cochain complexes which are bounded above and have bounded cohomology over the class $\proj(\Lambda).$ Silting theory is a topic originating from representation theory of Artin algebras which is an extension of tilting theory in the environment of triangulated categories \cite{AMV16}.

We denote by $\mathsf{C}(\C)$ the category of cochain complexes of objects in and additive category $\C.$ The homotopy category of cochain complexes of objects in $\C$ is denoted by $\mathsf{K}(\C).$  The two-terms presilting complexes in $\mathsf{K}^b(\proj(\Lambda))$ are characterized by the $\tau$-tilting theory \cite{AIR14}. For an additive generator $X$ in $\modu(\Lambda)$ and $F:=F_{\add(X)},$ we give in  this paper  a classification of the two-terms presilting complexes in $\mathsf{K}^b(\add(X))$ by using the $\tau$-rigid modules in $\modu(\Gamma),$ where $\Gamma:=\mathrm{End}_\Lambda(X)^{op}.$

For each $M\in\modu(\Lambda),$ we fix a minimal $F$-projective resolution  of $M$
\begin{center}
$\cdots\to P^{-n}_F(M)\xrightarrow{\pi^{-n}_F}P^{-n+1}_F(M)\to\cdots\to P^{-1}_F(M)\xrightarrow{\pi^{-1}_F}P^0_F(M)\xrightarrow{\pi^0_F} M\to 0.$  
\end{center}
Associated to this minimal projective resolution of $M,$ we have the cochain complex $P^\bullet_F(M)\in\mathsf{C}(\modu(\Lambda))$ which agrees with this resolution in degree $i\leq 0$ and vanishes for any degree $i>0.$ In order to avoid doubt and underline the role of the $\Lambda$-module $M,$ in the resolution above, we some times write $\pi^{-n}_{F,M}$ instead of $\pi^{-n}_F.$ This will be usefull when considering another $\Lambda$-module $N$ and its minimal $F$-projective resolution.
\

For any cochain complex $W^\bullet\in\mathsf{C}(\modu(\Lambda))$ and any integer $n,$ we have the truncated cochain subcomplex $W^{\bullet}_{\geq n}$ of $W^{\bullet},$ where $W^i_{\geq n}:= W^i$ if $i\geq n$ and $W^{i}_{\geq n}:=0$ elsewhere. Similarly, we have the truncated quotient cochain complex  $W^{\bullet}_{\leq n}$ of $W^{\bullet}.$ 

Consider $M\in\modu(\Lambda).$ We denote by $\mathrm{rk}(M)$ the number of pairwise non-isomorphic indecomposable direct summands of $M.$ Following \cite{AIR14}, it is said that $M$ is {\bf $\tau$-rigid} if $\Hom(M,\tau(M))=0.$ Moreover, $M$ is called {\bf $\tau$-tilting} if it is $\tau$-rigid and $\mathrm{rk}(M)=\mathrm{rk}({}_\Lambda\Lambda).$ 

Let $\X$ be an additive subcategory of $\modu(\Lambda)$ and $\Z$ be a class of objects in the homotopy category $\homoX$ of bounded cochain complexes in $\X.$ For each positive integer $i,$ the right $i$-th orthogonal class of $\Z$ is 
$$\Z^{\perp_i}:=\{P^\bullet\in\homoX\;|\;\Hom_{\homoX}(-,P^\bullet[i])|_{\Z}=0\}.$$
we also consider the right orthogonal class $\Z^{\perp_{>i}}:=\cap_{j>i}\Z^{\perp_j}.$ Dually, we have the left orthogonal classes ${}^{\perp_{>i}}\Z$ and ${}^{\perp_i}\Z$ of $\Z.$ Following \cite{AI12}, we recall that a cochain complex $P^\bullet\in\homoX$ is {\bf presilting} if $P^\bullet\in(P^\bullet)^{\perp_{>0}}.$ Moreover, $P^\bullet$ is {\bf silting} if it is presilting and $\homoX$ is the smallest triangulated subcategory containing $\add(P^\bullet).$ In the case of a $\tau$-tilting $\Lambda$-module $M,$ the truncated complex $P^\bullet_{\geq -1}(M)$ is silting in $\mathsf{K}^b(\proj(\Lambda))$ \cite[Proposition 3.7 (a)]{AIR14}.

\begin{defi}Let $X$ be an additive generator in $\mathrm{mod}(\Lambda)$ and $F:=F_{\mathrm{add}(X)}.$ We say that $M\in\modu(\Lambda)$ is {\bf F-silting} $(${\bf F-presilting}$)$ if $\mathrm{P^\bullet_{F\geq-1}}(M)$ is silting $(presilting)$ in $\mathsf{K}^b(\mathrm{add}(X)).$
\end{defi}

The next result, which is proved in Section 4,  is a generalization of \cite[Lemma 3.4]{AIR14}.

\begin{pro}\label{RAIR, 3.5} Let $X$ be an additive generator in $\modu(\Lambda),$  $F:=F_{\mathrm{add}(X)}$ and $\Gamma:=\End_\Lambda(X)^{op}.$ Then, for any $M\in\modu(\Lambda),$ the following  
 statements are equivalent.
\begin{itemize}
\item[$\mathrm{(a)}$] $M$ is $F$-presilting.
\item[$\mathrm{(b)}$] $\Hom_\Lambda(\pi^{-2}_{F,M}, M)=0$ and $\Ext^1_F(M,M)=0.$
\item[$\mathrm{(c)}$] $\Hom_\Lambda(\pi^{-2}_{F,M}, M)=0$ and $M\in{}^{_F\perp_1}\gen_F(M).$
\item[$\mathrm{(d)}$] $\mathrm{Hom}_\Lambda(X,M)$ is $\tau$-rigid in $\mathrm{mod}(\Gamma).$
\end{itemize} 
\end{pro}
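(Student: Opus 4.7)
The plan is to establish the equivalences in the order $(a)\Leftrightarrow(b)$, $(b)\Leftrightarrow(c)$ and $(a)\Leftrightarrow(d)$; throughout, I write $Q^\bullet:=P^\bullet_{F\geq-1}(M)$, a two-term complex in $\mathsf{K}^b(\add(X))$ concentrated in degrees $-1$ and $0$. For $(a)\Leftrightarrow(b)$, the key step is to compute $\Hom_{\mathsf{K}^b(\add(X))}(Q^\bullet,Q^\bullet[i])$ for all $i\geq 1$; since $Q^\bullet$ has length two this vanishes automatically when $i\geq 2$, so only $i=1$ is substantive. I would identify chain maps $Q^\bullet\to Q^\bullet[1]$ with arbitrary morphisms $g\colon P^{-1}_F(M)\to P^0_F(M)$, and null-homotopies with the set $\{s\pi^{-1}_{F,M}-\pi^{-1}_{F,M}t\}$ as $(s,t)$ runs over $\End_\Lambda(P^0_F(M))\oplus\End_\Lambda(P^{-1}_F(M))$. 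Using that $P^{-1}_F(M),P^0_F(M)\in\add(X)=\mathcal{P}(F)$, both $\Hom_\Lambda(P^{-1}_F(M),-)$ and $\Hom_\Lambda(P^0_F(M),-)$ are exact on $F$-exact sequences; hence postcomposition with $\pi^0_{F,M}$ is a surjection whose kernel is exactly $\{\pi^{-1}_{F,M}\circ t\}$ and which sends the $s\pi^{-1}_{F,M}$ piece onto $\Ima\,\Hom_\Lambda(\pi^{-1}_{F,M},M)$, yielding
\[\Hom_{\mathsf{K}^b(\add(X))}(Q^\bullet,Q^\bullet[1])\cong\Hom_\Lambda(P^{-1}_F(M),M)/\Ima\,\Hom_\Lambda(\pi^{-1}_{F,M},M).\]
Since $\Hom_\Lambda(\pi^{-2}_{F,M},M)\circ\Hom_\Lambda(\pi^{-1}_{F,M},M)=0$, postcomposition with $\pi^{-2}_{F,M}$ descends to this quotient and produces a short exact sequence
\[0\to\Ext^1_F(M,M)\to\Hom_{\mathsf{K}^b(\add(X))}(Q^\bullet,Q^\bullet[1])\to\Ima\,\Hom_\Lambda(\pi^{-2}_{F,M},M)\to 0,\]
whose middle term vanishes precisely when both outer terms do, which is $(b)$.

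For $(b)\Leftrightarrow(c)$, the direction $(c)\Rightarrow(b)$ is immediate because the split sequence $0\to 0\to M\xrightarrow{\id} M\to 0$ places $M$ in $\gen_F(M)$, so $\Ext^1_F(M,M)=0$ follows by taking the test object to be $M$ itself. For $(b)\Rightarrow(c)$ I would fix an arbitrary $N\in\gen_F(M)$ together with an $F$-exact sequence $0\to K\to M^{\oplus k}\xrightarrow{p} N\to 0$, and represent a class in $\Ext^1_F(M,N)$ by $g\colon P^{-1}_F(M)\to N$ with $g\,\pi^{-2}_{F,M}=0$. Since $P^{-1}_F(M)$ is $F$-projective, the $F$-epic $p$ lifts $g$ to some $\widetilde g\colon P^{-1}_F(M)\to M^{\oplus k}$. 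The hypothesis $\Hom_\Lambda(\pi^{-2}_{F,M},M)=0$ forces each component of $\widetilde g$ to annihilate $\pi^{-2}_{F,M}$, so $\widetilde g$ represents an element of $\Ext^1_F(M,M^{\oplus k})=\Ext^1_F(M,M)^{k}=0$. Therefore $\widetilde g=h\,\pi^{-1}_{F,M}$ for some $h$, and $g=(ph)\,\pi^{-1}_{F,M}$, i.e.\ the chosen class is zero.

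For $(a)\Leftrightarrow(d)$, I would use that the Yoneda functor $\Hom_\Lambda(X,-)\colon \add(X)\to \proj(\Gamma)$ is an equivalence of additive categories and hence induces a triangle equivalence $\mathsf{K}^b(\add(X))\simeq \mathsf{K}^b(\proj(\Gamma))$. Applying $\Hom_\Lambda(X,-)$ to the $F$-exact part $P^{-1}_F(M)\to P^0_F(M)\to M\to 0$ produces an exact sequence of $\Gamma$-modules, so $\Hom_\Lambda(X,Q^\bullet)$ is a minimal projective presentation of $\Hom_\Lambda(X,M)$. Under the equivalence, condition $(a)$ becomes the statement that $\Hom_\Lambda(X,Q^\bullet)$ is presilting in $\mathsf{K}^b(\proj(\Gamma))$, which by the classical (non-relative) case applied to $\Gamma$ (cf.\ \cite[Lemma 3.4]{AIR14}) is equivalent to $\Hom_\Lambda(X,M)$ being $\tau$-rigid in $\modu(\Gamma)$.

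The main obstacle will be the null-homotopy bookkeeping inside $(a)\Leftrightarrow(b)$: the extra summand $\Ima\,\Hom_\Lambda(\pi^{-2}_{F,M},M)$ in the short exact sequence above is precisely the term that accounts for the auxiliary condition on $\pi^{-2}_{F,M}$ in $(b)$ and that drives the lifting step in $(b)\Rightarrow(c)$, so separating it cleanly from the $\Ext^1_F$ factor (rather than collapsing them, as happens only when $\pd_F(M)\leq 1$) is the technical crux of the argument.
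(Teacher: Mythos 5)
Your proposal is correct and follows essentially the same route as the paper's proof: your identification of $\Hom_{\mathsf{K}^b(\add(X))}(Q^\bullet,Q^\bullet[1])$ with $\Hom_\Lambda(P^{-1}_F(M),M)/\Ima\,\Hom_\Lambda(\pi^{-1}_{F,M},M)$ is precisely the content of Lemma \ref{RAIR, 3.4} combined with the complex $(\mathrm{P}^\bullet_F(M),M)$ that the paper uses, your lifting argument for $(b)\Rightarrow(c)$ is the paper's, and $(a)\Leftrightarrow(d)$ is handled identically via the equivalence $\add(X)\simeq\proj(\Gamma)$ and \cite[Lemma 3.4]{AIR14}. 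The one step you assert rather than prove --- that $\Hom_\Lambda(X,-)$ carries the minimal $F$-projective presentation to a \emph{minimal} projective presentation over $\Gamma$ --- is exactly what Proposition \ref{p1, NAS81, 5.1}\,(a) supplies, while your explicit short exact sequence packaging of condition $(b)$ and the trivial $(c)\Rightarrow(b)$ (which the paper leaves implicit) are pleasant but inessential refinements.
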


Note that the notion of $F$-presilting module in $\modu(\Lambda)$ coincides with that of $\tau$-rigid, when $X=\Lambda.$ Moreover, it is a generalization of $F$-tilting by the above equivalences.  If $M$ is a $\tau$-rigid module, we know that  $\gen(M)$ is a preenveloping class \cite[Theorem 5.10]{AS81}. If $M$ is an $F$-presilting module, then 
$\gen_F(M)$ is not necessarily a preenveloping class as can be see in Example $\ref{Kronecker}.$
\

A natural question that arrives is when the condition $\Hom_\Lambda(\pi^{-2}_{F,M}, M)=0$ holds true? In what follows, we see three cases where this is possible.

\begin{rk}  Let $X$ be an additive generator in $\modu(\Lambda)$ and $F:=F_{\mathrm{add}(X)}.$ Then, for $M\in\modu(\Lambda),$ we have:
\begin{enumerate}
\item If $X=\Lambda$ and $M\in{}^{_F\perp_1}\gen_F(M),$ then by \cite[Proposition 1.2 (a), Proposition 2.4 (b)]{AIR14} we get that $\Hom_\Lambda(\pi^{-2}_{F,M}, M)=0.$ 
\item If $\pd_F(M)\leq 1$ then $\Hom_\Lambda(\pi^{-2}_{F,M}, M)=0.$ 
\item If $X$ is an Auslander generator in $\modu(\Lambda)$ and $\mathrm{rep.dim}(\Lambda)\leq 3,$ then $\Hom_\Lambda(\pi^{-2}_{F,M}, M)=0.$ Indeed, it follows from 
$\mathrm{(2)}$  and the discussion in \cite[pag. 785]{LM17}.
\end{enumerate}
\end{rk} 

The discussion in the above remark motivates the following definition.

\begin{defi}  Let $X$ be an additive generator in $\modu(\Lambda)$ and  $F:=F_{\mathrm{add}(X)}.$ We say that the algebra $\Lambda$ is {\bf $F$-admissible} if, for 
any $M\in\modu(\Lambda),$  we have that $\Ext^1_F(M,M)=0$ implies that $\Hom_\Lambda(\pi^{-2}_{F,M}, M)=0.$ 
\end{defi} 

In Example \ref{Ej-noFadm}, we give an Artin algebra $\Lambda$ and an additive generator $X\in\modu(\Lambda)$ such that $\Lambda$ is not $F$-admissible, for 
$F:=F_{\mathrm{add}(X)}.$ 
\

The second main result of this paper is a generalization of the bijection between preenveloping torsion class in 
$\modu(\Lambda)$ and basic support $\tau$-tilting $\Lambda$-modules \cite[Theorem 2.7]{AIR14}. In order to state that, we need the following notions. Let 
$\T\subseteq\modu(\Lambda)$ be such that $\T=\add(\T)$ and $\ind(\T\cap{}^{\perp_1}\T)=\{T_i\}_{i=1}^n.$ We set $\mathbb{P}(\T):=\oplus_{i=1}^n\,T_i.$ An example when this happens is the case when $\T=\gen(M)$ for some $\tau$-rigid $M\in\modu(\Lambda)$ \cite[Corollary 4.4]{AS81}. For $M,N\in\modu(\Lambda),$ the notation $N\mid M$ means that $N$ is a direct summand of $M.$

\begin{defi} Let $X$ be an additive generator in $\modu(\Lambda),$ $F:=F_{\mathrm{add}(X)}$ and $M\in\modu(\Lambda).$ We say that $M$ is {\bf special F-presilting} if $M$ is basic, $F$-presilting and $\mathrm{Hom}_\Lambda(X,M\oplus M')\nmid \mathbb{P}(\mathrm{gen(Hom}_\Lambda(X,M)))$ $\forall\,M'\in\modu(\Lambda)-\{0\}$ such that $\mathrm{add}(M)\cap\mathrm{add}(M')=0.$ 
\end{defi}

Note that, in the above definition, $\mathbb{P}(\mathrm{gen(Hom}_\Lambda(X,M))))$ exists since by Proposition \ref{RAIR, 3.5} we know that $\mathrm{Hom}_\Lambda(X,M)$ is $\tau$-rigid in $\mathrm{mod}(\Gamma).$ Now, we are ready to state the second main result of this paper which is the following one. For a proof, see in Section 4.

\begin{teo}\label{teo-princip2} Let $X$ be an additive generator in $\modu(\Lambda)$ and $F:=F_{\mathrm{add}(X)}$ be such that $\Lambda$ is $F$-admissible.  Consider the following classes:
\begin{itemize}
\item[$\mathrm{(a)}$] $\mathrm{F}$-$\mathrm{tor}$-$\mathrm{preen}(\Lambda)$ whose elements are the nonzero preenveloping $F$-torsion classes in $\modu(\Lambda);$
\item[$\mathrm{(b)}$] $\mathrm{F}$-$\mathrm{presilt}$-$\mathrm{esp}(\Lambda)$ whose elements are the iso-classes $[M]$, with $M\in\modu(\Lambda)$  special F-presilting such that $\gen_F(M)$ is preenveloping.
\end{itemize}
Then, the map $[M]\mapsto\mathrm{gen}_F(M)$ induces a bijection between $\mathrm{F}$-$\mathrm{presilt}$-$\mathrm{esp}(\Lambda)$ and $\mathrm{F}$-$\mathrm{tor}$-$\mathrm{preen}(\Lambda).$
\end{teo}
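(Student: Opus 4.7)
The plan is to push the problem through the functor $H := \Hom_\Lambda(X,-) \colon \modu(\Lambda) \to \modu(\Gamma)$, where $\Gamma := \End_\Lambda(X)^{op}$, and reduce to the Adachi--Iyama--Reiten bijection \cite[Theorem 2.7]{AIR14} between preenveloping torsion classes in $\modu(\Gamma)$ and basic support $\tau$-tilting $\Gamma$-modules. First I would verify that the map is well defined: if $M$ is special $F$-presilting with $\gen_F(M)$ preenveloping, then $\gen_F(M)$ is an $F$-torsion class. Closure under $F$-quotients is immediate from the definition, and closure under $F$-extensions is obtained from $\Ext^1_F(M,M)=0$ (Proposition~\ref{RAIR, 3.5}(b)) by applying the horseshoe construction in the $F$-exact structure to direct sums of $F$-presentations of the two end terms.

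For surjectivity I would construct an inverse as follows. Given a nonzero preenveloping $F$-torsion class $\mathcal{T}$, consider $\gen(H(\mathcal{T}))$ inside $\modu(\Gamma)$. Since $F=F_{\add(X)}$ is defined by exactness of $\Hom_\Lambda(X,-)$, the functor $H$ sends $F$-exact sequences to exact sequences; hence $\gen(H(\mathcal{T}))$ is closed under quotients and extensions, and $\mathcal{T}$-preenvelopes in $\modu(\Lambda)$ transport to preenvelopes into $\gen(H(\mathcal{T}))$ in $\modu(\Gamma)$, so the latter is a preenveloping torsion class. By \cite[Theorem 2.7]{AIR14} there is a unique basic support $\tau$-tilting $N\in\modu(\Gamma)$ with $\gen(N)=\gen(H(\mathcal{T}))$. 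The equivalence $\add(X)\simeq\proj(\Gamma)$ induced by $H$ lifts $N$ uniquely to a basic $M\in\modu(\Lambda)$ with $H(M)\cong N$, and Proposition~\ref{RAIR, 3.5}(d) then certifies $M$ as $F$-presilting. The $F$-admissibility hypothesis enters precisely here, guaranteeing $\Hom_\Lambda(\pi^{-2}_{F,M},M)=0$ from $\Ext^1_F(M,M)=0$ so that Proposition~\ref{RAIR, 3.5} applies cleanly. A direct computation via $F$-projective covers shows $\mathcal{T}=\gen_F(M)$, and the \emph{special} condition on $M$ is exactly what is needed to match basicness of $N$ together with the $\mathbb{P}$-constraint on the $\modu(\Gamma)$ side.

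Injectivity is then inherited from AIR: if $M_1,M_2$ both yield the same $\mathcal{T}$, then $H(M_1)\cong H(M_2)$ as basic support $\tau$-tilting $\Gamma$-modules, and basicness of each $M_i$ combined with $\add(X)\simeq\proj(\Gamma)$ forces $M_1\cong M_2$. The main obstacle I expect is the two-way translation of the preenveloping property through $H$: because $H$ is neither fully faithful nor exact in general, one must unfold the definition of $F_{\add(X)}$ carefully to see that preenvelopes in $\modu(\Lambda)$ into $\mathcal{T}$ correspond to honest preenvelopes in $\modu(\Gamma)$ into $\gen(H(\mathcal{T}))$, and that preenvelopes in $\modu(\Gamma)$ can be lifted back along $H$. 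Coupling this correspondence with the \emph{special} condition---which simultaneously encodes basicness of $M$ and the constraint on $\mathbb{P}(\gen(H(M)))$---is the delicate bookkeeping I expect to be the crux of the proof.
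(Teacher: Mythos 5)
Your overall strategy---transporting the problem along $H:=\Hom_\Lambda(X,-)$ to $\modu(\Gamma)$ and invoking \cite[Theorem 2.7]{AIR14}---is the right philosophy and is indeed the paper's, but two steps contain genuine gaps. The central one is the lifting step in your surjectivity argument: you take the basic support $\tau$-tilting $\Gamma$-module $N$ with $\gen(N)=\gen(H(\mathcal{T}))$ and claim that the equivalence $\add(X)\simeq\proj(\Gamma)$ lifts $N$ uniquely to a basic $M\in\modu(\Lambda)$ with $H(M)\cong N$. That equivalence only identifies $\add(X)$ with $\proj(\Gamma)$; on all of $\modu(\Lambda)$ the functor $H$ is full and faithful (Lemma \ref{l2, NAS81, 5.1}(a) --- contrary to your remark that it is not) but it is \emph{not} dense, so there is no reason for $N$, nor for $\mathbb{P}(\gen(H(\mathcal{T})))$, to lie in its essential image. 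This is exactly the obstruction that the notion of \emph{special} $F$-presilting is designed to circumvent: the paper first produces, via Proposition \ref{AuxP} (the $\mathcal{T}$-preenvelope of $X$, $\gen_F$-minimalization, Proposition \ref{NAS81, 5.1}), a basic $T\in\modu(\Lambda)$ with $\gen_F(T)=\mathcal{T}$ and $T\in{}^{_F\perp_1}\mathcal{T}$, uses $F$-admissibility at precisely this point to upgrade $T$ to an $F$-presilting module, and then enlarges it to $M=T\oplus N$ by adjoining a \emph{maximal} $N\in\modu(\Lambda)$ such that $\Hom_\Lambda(X,T\oplus N)$ divides $\mathbb{P}(\gen(\Hom_\Lambda(X,T)))$ --- never claiming equality with $\mathbb{P}$. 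Your placement of the $F$-admissibility hypothesis is correspondingly off: if one could certify the lifted module via Proposition \ref{RAIR, 3.5}(d), no admissibility would be needed at all; it is needed to pass from $\Ext^1_F(T,T)=0$ to $F$-presilting for the module coming out of Proposition \ref{AuxP}.

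The second gap is in injectivity: from $\gen_F(M_1)=\gen_F(M_2)$ you conclude that $H(M_1)\cong H(M_2)$ as basic support $\tau$-tilting $\Gamma$-modules. But $H(M_i)$ is only $\tau$-rigid, not support $\tau$-tilting, and distinct $\tau$-rigid modules can generate the same torsion class, so the AIR uniqueness does not apply directly. What one actually obtains is $\gen(H(M_1))=\gen(H(M_2))$ (Lemma \ref{basicM}(c)), hence $\mathbb{P}(\gen(H(M_1)))=\mathbb{P}(\gen(H(M_2)))$ and $H(M_i)\mid\mathbb{P}$ by \cite[Theorem 5.10]{AS81}; it is then the maximality encoded in the \emph{special} condition that forces $\add(M_1)=\add(M_2)$. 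Your well-definedness sketch is essentially fine, though closure of $\gen_F(M)$ under $F$-extensions is not a horseshoe argument from $\Ext^1_F(M,M)=0$ alone: it requires $M\in{}^{_F\perp_1}\gen_F(M)$ (Proposition \ref{RAIR, 3.5}(c)) and the precover argument of Lemma \ref{NAS81, 5.10}.
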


Finally, in Example \ref{Ejem4}, we give an Artin algebra $\Lambda$ where we can apply Theorem \ref{teo-princip2} but not the Theorem \ref{NHo83}.

\section{Background and preliminary results}

We start this section by collecting all the background material that will be necessary in the sequel. In all that follows,  $\Lambda$ denotes an Artin $R$-algebra.
\

Let $M\in\modu(\Lambda).$ The following technique allows solving problems in $\add(M)$ and takes them into questions about projective 
$\Gamma$-modules, where $\Gamma:=\mathrm{End}_\Lambda(M)^{op}.$

\begin{pro}\cite[Proposition II.2.1]{ARS95} \label{ASS06, VI.3.1}  For $M\in\modu(\Lambda),$  $\Gamma:=\mathrm{End}_\Lambda(M)^{op}$ and the evaluation functor $e_M:=\mathrm{Hom}_\Lambda(M,$-$): \modu(\Lambda)\to\modu(\Gamma),$ the following statements hold true.
\begin{itemize}
\item[(a)]  $e_M:\Hom_\Lambda(Z,X)\to \Hom_\Gamma(e_M(Z), e_M(X))$ is an isomorphism in $\modu(\Gamma),$ $\forall\,Z\in\add(M);$ $\forall\,X\in\modu(\Lambda).$ 
\item[(b)] $e_M|_{\add(M)}: \add(M)\to\proj(\Gamma)$ is an equivalence of categories.
\end{itemize}
\end{pro}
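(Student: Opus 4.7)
The plan is to reduce everything to the case $Z=M$ and then bootstrap by additivity and idempotents, mirroring the classical proof of the projectivization correspondence.

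For part (a), I first verify the isomorphism when $Z=M$. On the one hand, $e_M(M)=\Hom_\Lambda(M,M)=\End_\Lambda(M)$, which as a left $\Gamma$-module (via the precomposition action used to define $e_M$) is canonically isomorphic to ${}_\Gamma\Gamma$. On the other hand, there is a canonical evaluation-at-$1$ isomorphism $\Hom_\Gamma({}_\Gamma\Gamma, e_M(X))\cong e_M(X)$. A direct diagram chase shows that $e_M:\Hom_\Lambda(M,X)\to\Hom_\Gamma(e_M(M),e_M(X))$ coincides with the composition of these two canonical isomorphisms, so the case $Z=M$ is settled. The main care needed here is the bookkeeping around $\Gamma=\End_\Lambda(M)^{op}$, so that the left $\Gamma$-structure on $e_M(X)$ is the correct one.

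Next I propagate this to arbitrary $Z\in\add(M)$. Both $\Hom_\Lambda(-,X)$ and $\Hom_\Gamma(e_M(-),e_M(X))$ are additive contravariant functors in $Z$, so the $Z=M$ case immediately yields the isomorphism for $Z=M^n$, any $n\geq 1$. For a general $Z\in\add(M)$, pick a split idempotent presentation $Z\oplus Z'\cong M^n$; applying the additive functors to the split decomposition and using naturality of $e_M$ reduces the statement for $Z$ to the statement for $M^n$, which is already established.

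For part (b), full faithfulness of $e_M|_{\add(M)}$ is exactly part (a) restricted to $X\in\add(M)$. For essential surjectivity, recall that every $Q\in\proj(\Gamma)$ is a direct summand of some free module $\Gamma^n$, and $\Gamma^n\cong e_M(M^n)$. Write $Q$ as the image of an idempotent $\bar e\in\End_\Gamma(e_M(M^n))$; by part (a), $\bar e=e_M(e)$ for a unique idempotent $e\in\End_\Lambda(M^n)$. The image of $e$ is a direct summand $Z$ of $M^n$, hence lies in $\add(M)$, and applying $e_M$ to the split short exact sequence splitting off $Z$ shows $e_M(Z)\cong Q$.

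The only real subtlety I expect in any of the three steps is keeping the conventions straight: which side $\Gamma$ acts on the Hom-spaces, and the passage between $\End_\Lambda(M)$ and its opposite $\Gamma$. Once these are fixed at the outset, the argument is a direct application of the Yoneda-type identification $e_M(M)\cong{}_\Gamma\Gamma$ together with additivity and the idempotent-splitting characterization of $\proj(\Gamma)$.
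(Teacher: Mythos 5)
Your proof is correct. Note that the paper itself offers no argument for this proposition --- it is quoted verbatim from \cite[Proposition II.2.1]{ARS95} --- so there is nothing internal to compare against; what you have written is precisely the standard projectivization argument found in that reference: the Yoneda-type identification $e_M(M)\cong{}_\Gamma\Gamma$ settles the case $Z=M$, additivity and idempotent splitting extend it to all of $\add(M)$, and essential surjectivity onto $\proj(\Gamma)$ follows by lifting idempotents of $\End_\Gamma(\Gamma^n)\cong\End_\Lambda(M^n)$ and splitting them in $\modu(\Lambda)$. The one point you treat implicitly but should state is that $e_M$ does carry $\add(M)$ into $\proj(\Gamma)$ in the first place (a summand of $M^n$ goes to a summand of $\Gamma^n$); this is immediate from the same additivity you already invoke.
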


Let $F\subseteq\Ext^1_\Lambda.$ The class $\mathcal{I}(F)$ of the {\bf F-injective} modules  consists of all the 
$I\in\modu(\Lambda)$ such that, for every $F$-exact sequence $\eta: 0\to M\to N\to K\to0,$ the sequence $\Hom_\Lambda(\eta, I)$ is exact.  it is said that $F$ has {\bf enough injectives} if any $M\in\modu(\Lambda)$ admits an $F$-exact sequence $0\to M\to I\to Q\to 0$ with 
$I\in \mathcal{I}(F).$ Dually, it is defined the class $\mathcal{P}(F)$ of the  {\bf F-projective} modules and the notion saying that $F$ has {\bf enough projectives}.
\

 The next lemma will be  crucial for the development of the paper.

\begin{lem}\label{Sa12, 1.10} For $F\subseteq\Ext^1_\Lambda,$ the following statements hold true.
\begin{itemize}
\item[(a)] The class of all the $F$-exact sequences  is closed under pushouts, pulbacks and finite direct sums.
\item[(b)] Let $\eta: 0\to M\to N\to K\to0$ and $\eta': 0\to M'\to N'\to K'\to0$ be exact sequences in $\modu(\Lambda).$ Then,   
$\eta\oplus\eta'$ is $F$-exact if, and only if, $\eta$ and $\eta'$ are $F$-exact.
\item[(c)] $\gen_F(\Y)=\gen_F(\add(\Y)),$ for any $\Y\subseteq \modu(\Lambda).$
\item[(d)] Let $F=F_\X,$ for some $\X\subseteq\modu(\Lambda).$ Consider the following exact and commutative diagram in $\modu(\Lambda)$ 
$$\xymatrix{&0\ar[d]&0\ar[d]\\
&A\ar[d]\ar@{=}[r]&A\ar[d]\\
0\ar[r]& B\ar[r]\ar[d] & B' \ar[d] \ar[r]& B''\ar@{=}[d]\ar[r]&0\\
0\ar[r]& C\ar[d]\ar[r]& C'\ar[d]\ar[r]&C''\ar[r]&0\\
&0&0.}$$
If the middle row and the first column in the above diagram are $F$-exact, then all the rows and columns are $F$-exact.
\item[(e)] Let $F$ be with enough projectives and $\eta:\;0\to A\to B\to C\to 0$  be an exact sequence in $\modu(\Lambda).$ Then, $\eta$ is $F$-exact if, and only if, $\Hom_\Lambda(P,\eta)$ is exact for all $P\in\mathcal{P}(F).$
\item[(f)] Let $F$ be with enough projectives and $f:M\to N$ and $g:N\to K$ be morphisms in $\modu(\Lambda).$ Then, the following statements hold true.
 \begin{itemize}
 \item[(f1)] If $gf$ is  $F$-epic, then $g$ is $F$-epic.
 \item[(f2)] If $f$ and $g$ are $F$-epic, then $gf$ is $F$-epic.
 \end{itemize}

\end{itemize}
\end{lem}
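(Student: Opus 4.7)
The plan is to treat $F$ as an additive subbifunctor of $\Ext^1_\Lambda$, so that the usual bifunctorial operations on short exact sequences (pullback, pushout, direct sum) preserve membership in $F$, and then to use the characterization of $\mathcal{P}(F)$ via $\Hom$-exactness for the parts that hypothesize enough projectives. I would tackle the items in the order (a), (b), (c), (d), (e), (f), because (e) depends on (a) and (f) on (e).

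For (a), (b), (c) the arguments are bookkeeping. In (a), the pushout of $\eta\in F(C,A)$ along $u:A\to A'$ is $F(C,u)(\eta)\in F(C,A')$, the pullback along $v:C'\to C$ is $F(v,A)(\eta)\in F(C',A)$, and biadditivity of $F$ places $\eta\oplus\eta'$ inside $F(C\oplus C',A\oplus A')$. Part (b) then follows from (a), since each of $\eta,\eta'$ is recovered from $\eta\oplus\eta'$ by composing a coordinate inclusion and projection. For (c), only $\gen_F(\add(\Y))\subseteq\gen_F(\Y)$ needs work: given an $F$-exact $0\to K\to Y\to Z\to 0$ with $Y\in\add(\Y)$, choose $Y'$ with $Y\oplus Y'\in\Y^\oplus$ and take the direct sum with the split sequence on $Y'$; by (a) the result is $F$-exact, exhibiting $Z\in\gen_F(\Y)$.

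For (d), I would apply $\Hom_\Lambda(X,-)$ for an arbitrary $X\in\X$ to the whole diagram. The hypothesis $F=F_\X$ turns the middle row and first column into fully short exact sequences of abelian groups, the identification $B''\to C''$ being an isomorphism makes the rightmost column exact, and left exactness of $\Hom_\Lambda(X,-)$ supplies the remaining rows and columns with their left-exact portions. A nine-lemma-style chase then yields surjectivity of $\Hom_\Lambda(X,B')\to\Hom_\Lambda(X,C')$ and of $\Hom_\Lambda(X,C')\to\Hom_\Lambda(X,C'')$: lift the target through the middle-row surjection and the rightmost-column isomorphism, then fix the discrepancy using the surjection $\Hom_\Lambda(X,B)\to\Hom_\Lambda(X,C)$ coming from the first column. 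Since $X\in\X$ was arbitrary, the second column and the bottom row are $F$-exact.

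For (e), the forward implication is the definition of $\mathcal{P}(F)$. For the converse, enough $F$-projectives supplies an $F$-exact $0\to L\to P\to C\to 0$ with $P\in\mathcal{P}(F)$; the hypothesis lifts $P\to C$ through $B\to C$, the induced map $L\to A$ exhibits $\eta$ as a pushout of an $F$-exact sequence, and (a) then forces $\eta$ to be $F$-exact. Part (f) is an immediate consequence of (e): in (f1), any $\alpha\in\Hom_\Lambda(P,K)$ lifts through $gf$ and composing with $f$ produces a lift through $g$; in (f2), successive lifts through $g$ and then $f$ give a lift through $gf$. The main obstacle is the diagram chase in (d), where care is needed to track which maps are known to be surjective on $\Hom$ groups and in what order to invoke each hypothesis; everything else is a direct application of the additive-subbifunctor formalism or of (e).
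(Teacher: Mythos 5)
Your proposal is correct and, for the two items the paper actually proves in-line ((c) and (f)), it follows the same arguments. For (a), (b), (d) and (e) the paper merely cites \cite{AS93} and \cite{S19}, and the direct arguments you supply --- bifunctoriality of the subfunctor $F$ for (a)--(b), the $\Hom_\Lambda(X,-)$ nine-lemma chase for (d), and exhibiting $\eta$ as a pushout of an $F$-exact sequence with $F$-projective middle term for (e) --- are precisely the standard proofs underlying those citations, so the route is essentially the same.
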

\begin{proof} The proof of (a) and (b) can be found in \cite{AS93}, whereas the proof of (d) can be obtained from 
\cite[Proposition 3.9]{S19}. Moreover, (e) follows from \cite[Proposition 1.5 (a)]{AS93}.
\

Let us prove (c). Consider $\Y\subseteq \modu(\Lambda).$ It is clear that  $\gen_F(\Y)\subseteq\gen_F(\add(\Y)).$ Let 
$M\in \gen_F(\add(\Y)).$ Then there is an $F$-exact sequence $0\to K\to\Y'\to M\to 0$ such that $Y'\oplus Y''=Y^m$ and $Y\in \Y.$ Since 
$0\to Y''\to Y''\to 0\to 0$  is $F$-exact, we get from (a) that $0\to K\oplus Y''\to Y^m\to M\to 0$ is $F$-exact and thus $M\in\gen_F(\Y).$
Finally, (f) can be proved by using  (e).
\end{proof}

For a given class $\X\subseteq\modu(\Lambda),$ we denote by $\smd(\X)$ the class of all the $\Lambda$-modules which are a direct summand of objects in $\X.$

\begin{lem} \label{FP-FI} For $F:=F_{\add(X)},$ where $X$ is an additive generator in $\modu(\Lambda),$ the following statements hold true.
\begin{itemize}
\item[(a)] $F$ has enough injectives and projectives.
\item[(b)] $\mathcal{P}(F)=\add(X)$ and $\mathcal{I}(F)=\add(\tau(X))\oplus\inj(\Lambda).$
\item[(c)] Let $\X=\smd(\X)\subseteq\modu(\Lambda).$ Then, $\X$  is $F$-preenveloping  if, and only if, $\X$ is preenveloping  and 
$\mathcal{I}(F)\subseteq\X.$
\item[(d)] Let $M\in\modu(\Lambda).$ Then, $\pd_F(M)\leq n$ $\Leftrightarrow$ $\Ext^{n+1}_F(M,-)=0.$ 
\end{itemize}
\end{lem}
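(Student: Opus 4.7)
The plan for parts (a) and (b) is direct. On the projective side, given $M\in\modu(\Lambda)$, I would set $\Gamma=\End_\Lambda(X)^{\mathrm{op}}$, pick a finite $\Gamma$-generating set $f_1,\dots,f_n$ of $\Hom_\Lambda(X,M)$, and form $\phi=(f_1,\dots,f_n)\colon X^n\to M$. This $\phi$ is surjective because $\gen(X)=\modu(\Lambda)$ writes $M$ as a finite sum of images of maps $X\to M$, each of which is a $\Gamma$-linear combination of the $f_i$ and so has image contained in $\Ima(\phi)$. By construction $\Hom_\Lambda(X',\phi)$ is surjective for every $X'\in\add(X)$, so $0\to\Ker\phi\to X^n\to M\to 0$ is $F$-exact; this gives both enough $F$-projectives and $\add(X)\subseteq\mathcal{P}(F)$, while the reverse inclusion follows by splitting such a sequence when $M=P\in\mathcal{P}(F)$. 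For the identification $\mathcal{I}(F)=\add(\tau(X))\oplus\inj(\Lambda)$ and the existence of enough $F$-injectives, I would invoke the Auslander--Solberg machinery of \cite{AS93}, where the Auslander--Reiten formula $D\overline{\Hom}_\Lambda(M,\tau N)\cong\overline{\Ext}^1_\Lambda(N,M)$ converts the defining condition ``$\Hom_\Lambda(X,-)$ exact'' into ``cosplit by $\add(\tau(X))\cup\inj(\Lambda)$''.

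For part (c), the forward implication is short: applying $F$-preenvelopingness to $I\in\mathcal{I}(F)$ gives an $(\X,F)$-preenvelope $0\to I\to X''\to N\to 0$ which, being $F$-exact with $F$-injective source, splits and exhibits $I\in\smd(X'')\subseteq\X$; that $\X$ is preenveloping is immediate. The converse is the technical heart. Given $M$, I would take an $\X$-preenvelope $f\colon M\to X'$ together with an $F$-exact sequence $0\to M\xrightarrow{i}I\to Q\to 0$ with $I\in\mathcal{I}(F)\subseteq\X$, and form the pushout $P$ of $f$ and $i$. By Lemma \ref{Sa12, 1.10}(a) the pushout row $0\to X'\xrightarrow{f'}P\to Q\to 0$ is $F$-exact, and the key claim is that the mapping-cone sequence $\zeta\colon 0\to M\xrightarrow{(i,-f)}I\oplus X'\xrightarrow{(i',f')}P\to 0$ is $F$-exact too. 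By Lemma \ref{Sa12, 1.10}(e) this reduces to $\Hom_\Lambda(X,\zeta)$ being a short exact sequence, which I would verify by a Mayer--Vietoris chase: any $\phi\colon X\to P$ lifts, after projecting to $Q$, through the $F$-epi $I\to Q$ to some $\psi\colon X\to I$, and then $\phi-i'\psi$ factors through $\Ker(P\to Q)=f'(X')$, yielding $\theta\colon X\to X'$ with $\phi=i'\psi+f'\theta$. Finally $(i,-f)$ is an $\X$-preenvelope because any $h=kf\colon M\to Y\in\X$ equals $(0,-k)\circ(i,-f)$, and using the (implicit) additivity of $\X$ so that $I\oplus X'\in\X$, the sequence $\zeta$ supplies the required $(\X,F)$-preenvelope of $M$.

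Part (d) I would handle by a standard dimension-shift. Writing $\Omega^n_F M$ for the $n$-th syzygy in the minimal $F$-projective resolution of $M$, the long exact sequence of $\Ext^\bullet_F(-,N)$ applied to the short $F$-exact sequences $0\to\Omega^{i+1}_F M\to P^{-i}_F(M)\to\Omega^i_F M\to 0$ yields $\Ext^{n+1}_F(M,N)\cong\Ext^1_F(\Omega^n_F M,N)$ for every $N$. Thus $\Ext^{n+1}_F(M,-)=0$ is equivalent to $\Omega^n_F M\in\mathcal{P}(F)$, which by minimality of the resolution is equivalent to $\pd_F(M)\le n$. The main obstacle across the lemma is the $F$-exactness of the mapping cone $\zeta$ in the converse of (c); the injective identification in (b) is also non-trivial but essentially amounts to a reference into \cite{AS93}.
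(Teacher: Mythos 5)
Your proposal is correct in substance, but it is organized quite differently from the paper's proof, which handles (a), (b) and (c) almost entirely by citation to Auslander--Solberg (\cite[Propositions 1.10 and 4.2, Corollaries 1.6 and 1.13]{AS93}) and only writes out an argument for (d). Your direct construction of the $F$-epic $X^n\to M$ from a $\Gamma$-generating set of $\Hom_\Lambda(X,M)$ is a correct, self-contained replacement for the projective half of (a) and (b) (the injective half you, like the paper, leave to \cite{AS93}). For (d) the two arguments shift dimension on opposite sides: you resolve $M$ by $F$-projectives and show $\Omega^n_FM\in\mathcal{P}(F)$ via the splitting of $0\to\Omega^{n+1}_FM\to P^{-n}_F(M)\to\Omega^n_FM\to0$, whereas the paper coresolves the second variable by $F$-injectives to deduce $\Ext^{n+j+1}_F(M,N)\simeq\Ext^{n+1}_F(M,K)=0$ for all $j\geq1$. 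Both are valid; yours has the mild advantage of not invoking the characterization of $\pd_F$ by vanishing of \emph{all} higher $\Ext_F$.

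The one point to watch is in your converse direction of (c): the map $(i,-f)\colon M\to I\oplus X'$ is an $(\X,F)$-preenvelope only if $I\oplus X'\in\X$, and the stated hypothesis $\X=\smd(\X)$ does not by itself give closure under finite direct sums. You flag this as ``implicit additivity,'' and indeed it is harmless in the AS93 framework and in every application in the paper (where $\X=\add(\X)$), but as written your argument uses a hypothesis slightly stronger than the one in the statement. This can be avoided entirely: since $M$ is an $F$-subobject of $I\in\mathcal{I}(F)\subseteq\X$, the $\X$-preenvelope $f\colon M\to X'$ factors the $F$-mono $M\to I$ and is therefore itself $F$-monic (pull back the $F$-exact sequence $0\to M\to I\to Q\to0$ along the induced map of cokernels and use Lemma \ref{Sa12, 1.10}(a)); thus $0\to M\xrightarrow{f}X'\to\Coker(f)\to0$ is already the required $(\X,F)$-preenvelope. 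This is exactly the mechanism the paper isolates later as Lemma \ref{l2, NAS, 4.6}, and it renders your pushout and Mayer--Vietoris chase unnecessary.
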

\begin{proof} The item (a)  follows from \cite[Proposition 1.10 and Corollary 1.13]{AS93}, and  (b) can be obtained from  \cite[Proposition 1.10 and Corollary 1.6 (a)]{AS93}. 
 \
 
Let us prove (c). By (b),  $\mathcal{P}(F)=\add(X).$ Then, (c)  follows from \cite[Proposition 4.2]{AS93}. In order to prove (d), let $\Ext^{n+1}_F(M,-)=0.$ Consider $N\in\modu(\Lambda).$ Since, by (a), $F$-has enough injectives, there is an $F$-exact sequence 
$0\to N\to I_0\to I_1\to\cdots\to I_{j-1}\to K\to 0$ where $I_k\in\mathcal{I}(F)$ $\forall\,k.$ Hence 
$\Ext^{n+j+1}_F(M,N)\simeq \Ext^{n+1}_F(M,K)=0$ $\forall\, j\geq 1$ and thus $\pd_F(M)\leq n.$
\end{proof}

Following G. M. Kelly in \cite{Ke64}, we recall the notion the {\bf radical} ideal $\mathrm{rad}_\Lambda$ of the category 
$\modu(\Lambda).$ This ideal is defined, for any $M,N\in\modu(\Lambda),$ as follows 
$$\mathrm{rad}_\Lambda(M,N):=\{f\in\Hom_\Lambda( M,N)\;|\;1_M-gf\in U(\End_\Lambda(M))\;\forall\,g\in\Hom_\Lambda(N,M)\}.$$
 The notion of right and left minimal morphisms were introduced, in \cite{AS80}, for the study of the  preprojective partitions in 
 $\modu(\Lambda).$ We recall that  $f: M\to N$ in $\modu(\Lambda)$ is called {\bf left minimal} if every morphism $g: N\to N$ such that $gf=f$  is an isomorphism. Dually, it is defined {\bf rigt minimal} morphism. The following result shows us the relationship between the previous concepts.

\begin{lem}\cite[Lemma 2.5]{ABM98}\label{ABM98, 2.5} For a morphism $f: M\to N$  in $\modu(\Lambda),$  the following statements are equivalent.
\begin{itemize}
\item[$(a)$] $f$ is left minimal.
\item[$(b)$] For all pseudocokernel $g:N\to K$ of $f,$ we have that  $g\in\rad_\Lambda(N,K).$ 
\item[$(c)$] There is some  pseudocokernel $g: N\to K$  of $f$ such that $g\in\rad_\Lambda(N,K).$
\end{itemize}
\end{lem}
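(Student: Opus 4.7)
The plan is to prove the cyclic chain of implications (a) $\Rightarrow$ (b) $\Rightarrow$ (c) $\Rightarrow$ (a). The key observation threading the argument together is that for any pseudocokernel $g:N\to K$ of $f$ and any $s:K\to N$, the endomorphism $h:=1_N-sg$ of $N$ satisfies $hf=f-s(gf)=f$, so left minimality of $f$ is equivalent to demanding that all such $h$ be automorphisms; and conversely, any $h:N\to N$ with $hf=f$ satisfies $(1_N-h)f=0$ and so factors through any pseudocokernel $g$, producing $h=1_N-sg$ for some $s:K\to N$. This duality between the radical condition on $g$ and the left minimality of $f$ is what makes the three conditions equivalent.

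For (a) $\Rightarrow$ (b): fix an arbitrary pseudocokernel $g:N\to K$ of $f$ and any $s\in\Hom_\Lambda(K,N)$. Put $h:=1_N-sg$; as noted above, $hf=f$, so by hypothesis $h$ is an automorphism of $N$. Since this holds for every $s$, the definition of $\rad_\Lambda(N,K)$ recalled from Kelly's paper is satisfied, giving $g\in\rad_\Lambda(N,K)$.

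For (b) $\Rightarrow$ (c): because $\modu(\Lambda)$ is abelian, the usual cokernel of $f$ exists and is in particular a pseudocokernel, so at least one pseudocokernel of $f$ exists and (b) applies to it.

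For (c) $\Rightarrow$ (a): let $g:N\to K$ be the pseudocokernel provided by (c), and let $h:N\to N$ satisfy $hf=f$. Then $(1_N-h)f=f-hf=0$, so by the (weak) universal property of the pseudocokernel there exists $s:K\to N$ with $sg=1_N-h$; equivalently $h=1_N-sg$. Since $g\in\rad_\Lambda(N,K)$, the endomorphism $1_N-sg$ is a unit of $\End_\Lambda(N)$, so $h$ is an isomorphism, proving that $f$ is left minimal.

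The only mildly delicate point is handling the non-uniqueness inherent in ``pseudo'' cokernels, but since in both directions we only need the existence (not uniqueness) of a factorization $s$, this causes no trouble; the rest is a direct unwinding of the definitions of $\rad_\Lambda$ and of left minimality.
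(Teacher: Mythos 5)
Your proof is correct. The paper does not actually prove this lemma --- it is imported from \cite[Lemma 2.5]{ABM98} --- and your argument is the standard direct verification one would expect there: the correspondence $h=1_N-sg$ between endomorphisms $h$ of $N$ with $hf=f$ and morphisms $s$ out of a pseudocokernel $g$ (using $gf=0$ in one direction and the weak factorization property in the other), combined with the definition of $\rad_\Lambda(N,K)$, together with the observation that the honest cokernel supplies the pseudocokernel needed to pass from (b) to (c).
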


\section{$F$-torsion and $F$-preenveloping classes in $\modu(\Lambda)$}

In this section, we study the $F$-torsion classes in $\modu(\Lambda)$ and their relationship with the $F$-tilting $\Lambda$-modules.

\begin{lem}\label{T=addT} Let $\mathcal{T}\subseteq\modu(\Lambda)$ and $F\subseteq\Ext^1_\Lambda.$ If 
$\mathcal{T}$ is an $F$-torsion class, then $\mathcal{T}=\mathrm{add}(\mathcal{T}).$
\begin{proof} Let $\T$ be $F$-torsion. Since every split exact sequence is $F$-exact, from the exact sequence 
$0\to T\overset{1_T}{\to}T\to0\to0$, with $T\in\mathcal{T},$  it follows that $0\in\mathcal{T}.$ Now, since $\mathcal{T}$ closed under $F$-extensions and $F$-quotients, we get that $\mathrm{add}(\mathcal{T})=\mathcal{T}$.
\end{proof}
\end{lem}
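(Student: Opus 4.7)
The goal is to establish $\mathcal{T}=\add(\mathcal{T})$, which amounts to showing three closure properties: that $0\in\mathcal{T}$, that $\mathcal{T}$ is closed under finite direct sums, and that $\mathcal{T}$ is closed under direct summands. My plan is to derive each of these from the two defining properties of an $F$-torsion class (closure under $F$-extensions and closure under $F$-quotients) by feeding them appropriate split short exact sequences.

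The one observation that drives the whole argument is that every split short exact sequence is automatically $F$-exact, regardless of which subfunctor $F\subseteq\Ext^1_\Lambda$ is chosen. This is because such a sequence represents the zero element of $\Ext^1_\Lambda(C,A)$, and $F(C,A)$, being a subgroup, always contains $0$. With this in hand, the rest is essentially formal. For $0\in\mathcal{T}$, I would pick any $T\in\mathcal{T}$ and apply the $F$-quotient hypothesis to the split sequence $0\to T\xrightarrow{1_T}T\to 0\to 0$. For closure under direct sums, given $A,B\in\mathcal{T}$, the split sequence $0\to A\to A\oplus B\to B\to 0$ is $F$-exact, and closure under $F$-extensions gives $A\oplus B\in\mathcal{T}$. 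For closure under direct summands, if $A\oplus B\in\mathcal{T}$, the same split sequence together with closure under $F$-quotients yields $B\in\mathcal{T}$, and the symmetric split $0\to B\to A\oplus B\to A\to 0$ yields $A\in\mathcal{T}$.

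There is no real obstacle in this proof; the lemma is almost a tautology once one notices that split sequences lie in every additive subfunctor. The main point to be careful about is simply to invoke the correct closure property (extension vs.\ quotient) against the correct split sequence, and to remember that closure under $F$-extensions applied to $A,B\in\mathcal{T}$ requires first knowing that the middle term of an $F$-exact sequence with outer terms in $\mathcal{T}$ lies in $\mathcal{T}$, which is exactly the extension-closure axiom.
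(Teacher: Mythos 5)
Your proof is correct and follows essentially the same route as the paper: the key observation that split exact sequences lie in every additive subfunctor $F$, then $0\in\mathcal{T}$ via the $F$-quotient $0\to T\xrightarrow{1_T}T\to 0\to 0$, and closure under finite sums and summands via the split sequences $0\to A\to A\oplus B\to B\to 0$ together with $F$-extension and $F$-quotient closure. The paper states this more tersely, but your spelled-out version is exactly the intended argument.
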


\begin{lem}\label{l2, NAS, 4.6} Let $\mathcal{X}\subseteq\mathrm{mod}(\Lambda)$, $M\in\mathrm{mod}(\Lambda)$, $F\subseteq\Ext^1_\Lambda$ and $f: M\to X$ be an $\mathcal{X}$-preenvelope. If $M$ is an $F$-subobject of some $X'\in\mathcal{X},$ then $f$ is $F$-monic.
\begin{proof} Since $f: M\to X$ is an $\mathcal{X}$-preenvelope and $M$ is a $F$-subobjet of $X'\in\mathcal{X}$, we have the following exact and commutative diagram in $\modu(\Lambda)$
$$\xymatrix{\eta:\;0\ar[r]&M\ar[r]^{f}\ar@{=}[d]&X\ar[r]\ar[d]&K'\ar[r]\ar[d]&0\\
\eta': \;0\ar[r]&M\ar[r]&X'\ar[r]&K\ar[r]&0,}$$
where $\eta'$ is $F$-exact. Thus, $\eta$ is a pull-back from $\eta'$ and by Lemma \ref{Sa12, 1.10} (a), it follows that 
 $0\to M\overset{f}{\to}X\to K'\to0$ is $F$-exact.
 \end{proof}
 \end{lem}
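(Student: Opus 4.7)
The plan is to exploit the defining factorization property of a preenvelope in order to embed the candidate short exact sequence as a pullback of a sequence that is already $F$-exact, and then invoke Lemma \ref{Sa12, 1.10}(a).

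First, I would unpack the hypothesis: saying that $M$ is an $F$-subobject of $X'\in\mathcal{X}$ means that there exists an $F$-exact sequence
\[
\eta': \; 0\to M\xrightarrow{\iota} X'\to K\to 0.
\]
Since $f: M\to X$ is an $\mathcal{X}$-preenvelope and $X'\in\mathcal{X}$, the map $\iota$ must factor through $f$; that is, there exists a morphism $h:X\to X'$ with $h\circ f=\iota$. Because $\iota$ is a monomorphism, this forces $f$ to be a monomorphism as well, so we obtain an honest short exact sequence
\[
\eta:\;0\to M\xrightarrow{f} X\to K'\to 0,
\]
where $K':=\Coker(f)$.

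Next, the universal property of the cokernel gives a unique $\bar h : K'\to K$ making the square on the right of the two sequences commute, so that we have a morphism of short exact sequences $\eta\to\eta'$ which is the identity on $M$. A small diagram chase (or one application of the snake lemma) shows that the middle square
\[
\xymatrix{X\ar[r]\ar[d]_{h} & K'\ar[d]^{\bar h}\\ X'\ar[r] & K}
\]
is a pullback square; equivalently, $\eta$ is obtained from $\eta'$ by pulling back along $\bar h$. Finally, since the class of $F$-exact sequences is closed under pullbacks by Lemma \ref{Sa12, 1.10}(a), the sequence $\eta$ is $F$-exact, which is exactly the assertion that $f$ is $F$-monic.

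There is no real obstacle here: the only slightly delicate point is verifying that the map $\eta\to\eta'$ induced by $h$ presents $\eta$ as a pullback of $\eta'$, and this is a standard homological fact (alternatively one can argue directly from the five lemma applied to $\eta$ and the explicit pullback sequence, both of which are extensions of $K'$ by $M$ with identical outer maps).
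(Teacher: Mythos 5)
Your proposal is correct and follows essentially the same route as the paper: factor the $F$-monic $\iota:M\to X'$ through the preenvelope $f$, observe that the resulting morphism of short exact sequences (identity on $M$) exhibits $\eta$ as the pullback of the $F$-exact sequence $\eta'$, and conclude by the closure of $F$-exact sequences under pullbacks from Lemma \ref{Sa12, 1.10}(a). You merely spell out two details the paper leaves implicit, namely that $f$ is a monomorphism and that the induced square is indeed a pullback.
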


The next result is the relative version of \cite[Proposition 4.6]{AS80}.

\begin{pro}\label{NAS80, 4.6} For an additive generator $X\in\mathrm{mod}(\Lambda)$, $F:=F_{\mathrm{add}(X)}$ and $\mathcal{T}\subseteq\mathrm{mod}(\Lambda)$, the following statements hold true.
\begin{itemize}
\item[(a)] Let $\mathcal{T}$ be an $F$-torsion and  preenveloping class. Then $\mathcal{T}=\mathrm{gen}_F(T)$, where $f: X\to T$ is a $\mathcal{T}$-preenvelope of $X.$
\item[(b)] Let $\mathcal{I}(F)\subseteq\mathcal{T}=\mathrm{gen}_F(T)$, for some $T\in\mathcal{T}$. Then, the following conditions are satisfied.
  \begin{itemize}
  \item[(b1)] $\forall$ $X'\in\mathrm{add}(X)$ there is a $(\mathcal{T},F)$-preenvelope $f: X'\to T'$, with $T'\in\mathrm{add}(T)$.
 \item[ (b2)] $\mathcal{T}$ is $F$-preenveloping.
 \end{itemize}
\end{itemize}
\begin{proof} (a) Let $f: X\to T$ be a $\mathcal{T}$-preenvelope of $X.$ We assert that $\mathcal{T}=\mathrm{gen}_F(T).$ Indeed, the inclusion $\mathrm{gen}_F(T)\subseteq\T$ is clear since $\mathcal{T}$ is closed under finite coproducts and $F$-quotients.
\

 Let $T'\in\mathcal{T}$. By Lemma \ref{FP-FI} (a,b), there is an $F$-exact sequence 
 $\eta: 0\to K \to X'\to T'\to 0,$ with $X'\oplus X''=X^m.$ Since $\xi: 0\to X ''\to X ''\to0\to0$ is $F$-exact, it follows from Lemma \ref{Sa12, 1.10} (a) that $\eta\oplus \xi:\;0\to K\oplus X''\to X^m\to T'\to0$ is  $F$-exact. Using that $f^m:X^m\to T^m$ is a $\mathcal{T}$-preenvelope, we can get the following exact commutative diagram in $\modu(\Lambda)$
$$\xymatrix{0\ar[r]&K\oplus X''\ar[r]\ar[d]&X^m\ar[r]\ar[d]^{f^m}&T'\ar[r]\ar@{=}[d]&0\\
0\ar[r]&K'\ar[r]&T^m\ar[r]_{h}&T'\ar[r]&0.}$$
Thus the bottom row of the above diagram  is the pushout of the top one and hence, by Lemma \ref{Sa12, 1.10} (a),  $h:T^m\to T '$ is 
$F$-epic; proving that $\mathcal{T}\subseteq\mathrm{gen}_F(T).$
\

(b) Let $X'\in\mathrm{add}(X)$ and $g: X'\to T'$ be an $\mathrm{add}(T)$-preenvelope. To prove (b1), we show that $g: X'\to T'$ is a 
$(\mathcal{T},F)$-preenvelope. Indeed, let $Z\in\T=\mathrm{gen}_F(T)$, $k: T^m\to Z$ be $F$-epic and $h: X'\to Z.$ Since $X'$ is $F$-projective, there exists $l: X'\to T^m$ such that $h=kl.$ As $g: X'\to T'$ is an $\mathrm{add}(T)$-preenvelope, there is $r: T'\to T^m$ such that $rg=l$. Then $krg=kl=h$ and thus $g$ is a $\T$-preenvelope. On the other hand, by Lemma \ref{FP-FI} (a), there is a 
$F$-exact sequence $0\to X'\to I\to K\to0$, with $I\in\mathcal{I}(F)\subseteq\mathcal{T}$. Hence, by Lemma \ref{l2, NAS, 4.6}, we conclude that $g: X'\to T'$ is a $(\mathcal{T},F)$-preenvelope.
\

Let us show now that $\T$ is $F$-preenveloping. Indeed, let $N\in\mathrm{mod}(\Lambda).$ By Lemma \ref{FP-FI} (a,b), there is an 
$F$-exact sequence $0\to A\to X'\overset{p}{\to}N\to0$ with $X'\in\mathrm{add}(X).$ On the other hand,  by (b1) there is an 
$(\mathcal{T},F)$-preenvelope $0\to X'\overset{w}{\to}\bar{T}\to C\to0$, with $\bar{T}\in\mathrm{add}(T)$. Then, by Lemma \ref{Sa12, 1.10} (d),  we obtain the following $F$-exact and commutative diagram
$$\xymatrix{&0\ar[d]&0\ar[d]\\
&A\ar[d]\ar@{=}[r]&A\ar[d]\\
0\ar[r]&X'\ar[r]^w\ar[d]_{p}&\bar{T}\ar[d]^{p'}\ar[r]&C\ar@{=}[d]\ar[r]&0\\
0\ar[r]&N\ar[d]\ar[r]_{w'}&Z\ar[d]\ar[r]&C\ar[r]&0\\
&0&0.}$$
In particular,  $0\to N\overset{w'}{\to}Z\to C\to0$ and $0\to A\to\bar{T}\overset{p'}{\to}Z\to0$ are $F$-exact sequences.  Hence, by Lemma \ref{Sa12, 1.10} (c), it follows that $Z\in\mathrm{gen}_F(\mathrm{add}(T))=\mathrm{gen}_F(T)=\mathcal{T}.$ Let us show that  $w': N\to Z$ is a $\mathcal{T}$-preenvelope. Indeed, let $g: N\to Y$ with $Y\in\mathcal{T}.$ Since $w: X'\to\bar{T}$ is a $\mathcal{T}$-preenvelope, there exists $t: \bar{T}\to Y$ such that $tw=gp.$ By the universal property of the pushout, the exists $w':Z\to Y$ such that the 
following diagram commutes
$$\xymatrix{X'\ar[r]^w\ar[d]_{p}&\bar{T}\ar[d]^{p'}\ar@/^4pc/[ddl]^t\\
N\ar[r]_{w'}\ar[d]_{g}&Z\ar@{-->}[ld]^{\exists\;g'}\\
Y.}$$
Therefore $w': N\to Z$ is a $(\mathcal{T},F)$-preenvelope, and thus, $\mathcal{T}$ is $F$-preenveloping. 
\end{proof}
\end{pro}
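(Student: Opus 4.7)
The plan is to exploit the enough-projectives/injectives property of $F$ provided by Lemma \ref{FP-FI}, together with the stability of $F$-exact sequences under pushouts, pullbacks and the $3\times 3$-lemma machinery of Lemma \ref{Sa12, 1.10}. The underlying theme is that $\mathcal{P}(F)=\add(X)$, so each module admits an $F$-exact sequence $0\to K\to X'\to N\to 0$ with $X'\in\add(X)$, and this can be transported to $\mathcal{T}$ by a pushout along a suitable preenvelope.

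For part (a), the inclusion $\gen_F(T)\subseteq \mathcal{T}$ follows immediately since $\mathcal{T}$ is closed under $F$-quotients and, by Lemma \ref{T=addT}, under finite direct sums. For the reverse inclusion, given $T'\in\mathcal{T}$ I would first produce an $F$-exact sequence $0\to K\to X^m\to T'\to 0$ using Lemma \ref{FP-FI}(a,b) (possibly orthogonalising by a split $F$-exact sequence $0\to X''\to X''\to 0\to 0$ to replace an arbitrary summand of $X^m$ with the whole $X^m$). Then I would push this sequence along the coproduct $f^m: X^m \to T^m$ of the preenvelope $f:X\to T$; Lemma \ref{Sa12, 1.10}(a) guarantees the resulting morphism $T^m\to T'$ is $F$-epic, which is exactly the witness that $T'\in \gen_F(T)$.

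For (b1), given $X'\in\add(X)$ I would take an $\add(T)$-preenvelope $g:X'\to T'$. The fact that $X'$ is $F$-projective combined with $\mathcal{T}=\gen_F(T)$ allows any $h:X'\to Z$ with $Z\in\mathcal{T}$ to be lifted to some $l:X'\to T^m$, and then the $\add(T)$-preenvelope property factors $l$ through $g$, so $g$ is a $\mathcal{T}$-preenvelope. To upgrade to $F$-monic, invoke Lemma \ref{l2, NAS, 4.6}: since $\mathcal{I}(F)\subseteq \mathcal{T}$, the $F$-exact envelope $0\to X'\to I\to K\to 0$ with $I\in\mathcal{I}(F)$ shows that $X'$ is an $F$-subobject of an element of $\mathcal{T}$, hence $g$ is $F$-monic.

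For (b2), given $N$ I would start from an $F$-exact sequence $0\to A\to X'\xrightarrow{p} N\to 0$ with $X'\in\add(X)$ (Lemma \ref{FP-FI}) and combine it with the $(\mathcal{T},F)$-preenvelope $0\to X'\xrightarrow{w}\bar{T}\to C\to 0$ from (b1). Taking the pushout of $p$ and $w$ yields a diagram whose middle row and first column are $F$-exact, so by Lemma \ref{Sa12, 1.10}(d) every row and column is $F$-exact. This gives $0\to N\xrightarrow{w'}Z\to C\to 0$ with $Z\in\gen_F(\add(T))=\gen_F(T)=\mathcal{T}$ (the last identity is Lemma \ref{Sa12, 1.10}(c)). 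The main obstacle, which I expect to be the most delicate step, is showing $w'$ is genuinely a $\mathcal{T}$-preenvelope: given $g:N\to Y$ with $Y\in\mathcal{T}$, I would factor $gp$ through $w$ using that $w$ is a $\mathcal{T}$-preenvelope, and then invoke the universal property of the pushout square to produce the required factorisation through $w'$. Combined with the $F$-exactness of the bottom row, this exhibits $w'$ as a $(\mathcal{T},F)$-preenvelope, completing the proof.
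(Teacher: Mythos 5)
Your proposal is correct and follows essentially the same route as the paper's proof: the same pushout of the $F$-projective presentation along $f^m$ for (a), the same lifting argument plus Lemma \ref{l2, NAS, 4.6} for (b1), and the same $3\times 3$ pushout diagram with the universal property giving the preenvelope factorisation for (b2). No gaps.
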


The next result is the relative version of \cite[Theorem 5.10]{AS81}.

\begin{lem}\label{NAS81, 5.10} Let $X,$ $M\in\modu(\Lambda)$ be such that $X$ is an adittive generator, $F:=F_{\mathrm{add}(X)}$ and $M\in{}^{_F\perp_1}\mathrm{gen}_F(M).$ Then, $\mathrm{gen}_F(M)$ is an $F$-torsion class.

\begin{proof} We start by proving that  $\mathrm{gen}_F(M)$ is closed under $F$-quotients. Let $p: L\to L'$ be  $F$-epic, with $L\in\mathrm{gen}_F(M).$ In particular, there is an $F$-epic $q: M^r\to L.$ Then, by Lemma \ref{Sa12, 1.10} (f2),  $pq: M^r\to L'$ is $F$-epic and thus $\mathrm{gen}_F(M)$ is closed under $F$-quotients.
\

Let us show that  $\mathrm{gen}_F(M)$ is closed under $F$-extensions. Indeed, consider an $F$-exact sequence 
$\eta: 0\to A\overset{f}{\to}B\overset{g}{\to}C\to0$  with $A$ and $C$ in $\mathrm{gen}_F(M).$ Let $h: M'\to B$ be an $\mathrm{add}(M)$-precover. We assert that $h$ is  $F$-epic. By Lemma \ref{FP-FI} (a,b), there is an $F$-epic $k: X'\to B,$ with $X'\in\mathrm{add}(X).$ On the other hand, since $C\in\mathrm{gen}_F(M)$ there is an $F$-epic $h': M^n\to C.$  Therefore, there exists $k': X'\to M^n$ such that $h'k'=gk.$ By applying $\mathrm{Hom}_\Lambda(M^n,-)$ to $\eta,$ and since $\mathrm{add}(M)\in{}^{_F\perp_1}\mathrm{gen}_F(M),$ we get the exact sequence 
$$\mathrm{Hom}_\Lambda(M^n,B)\xrightarrow{\mathrm{Hom}_\Lambda(M^n,g)}\mathrm{Hom}_\Lambda(M^n,C)\to0.$$
Then, there is $h'': M^n\to B$ such that $h'=gh''.$ Now $g(h''k'-k)=h'k'-gk=0,$ and due to the universal property of the kernel, we have the following commutative diagram
$$\xymatrix{&&X'\ar@{-->}[ld]_{\exists!\;t}\ar[d]^{h''k'-k}\\
0\ar[r]&A\ar[r]_{f}&B\ar[r]_{g}&C\ar[r]&0.}$$
Since $A\in\mathrm{gen}_F(M),$ there is an $F$-epic $l: M^m\to A.$ Then, by Lemma \ref{FP-FI} (b),  there is $s: X'\to M^m$ such that $t=ls.$ Since $h: M'\to B$ is an $\mathrm{add}(M)$-precover, exists $q: M^m\to M'$ such that $fl=hq.$ Finally, since $h: M'\to B$ is an $\mathrm{add}(M)$-precover, there is $q': M^n\to M'$ such that $h''=hq'.$ So that $h(q'k'-qs)=h''k'-hqs=h''k'-ft=k$ and $h$ is an epimorphism. Since $k$ is  $F$-epic, from the equality $h(q'k'-qs)=k$ and Lemma \ref{Sa12, 1.10} (f1), it follows that $h$ is $F$-epic. Then $B\in\mathrm{gen}_F(M)$ and thus $\mathrm{gen}_F(M)$ is closed under $F$-extensions.
\end{proof}
\end{lem}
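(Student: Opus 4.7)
The plan is to verify the two defining properties of an $F$-torsion class separately for $\mathcal{T} := \gen_F(M)$: closure under $F$-quotients and closure under $F$-extensions. The first is essentially immediate, while the second requires using the orthogonality hypothesis $M \in {}^{_F\perp_1}\gen_F(M)$ in an essential way.

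For closure under $F$-quotients, let $L \in \gen_F(M)$ with an $F$-epic $q : M^r \to L$, and let $p : L \to L'$ be $F$-epic. Then by Lemma \ref{Sa12, 1.10}(f2), the composition $pq : M^r \to L'$ is $F$-epic, so $L' \in \gen_F(M)$.

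For closure under $F$-extensions, take an $F$-exact sequence $\eta : 0 \to A \xrightarrow{f} B \xrightarrow{g} C \to 0$ with $A, C \in \gen_F(M)$, and choose $F$-epimorphisms $\alpha : M^m \to A$ and $\beta : M^n \to C$. The strategy is to take an $\add(M)$-precover $h : M' \to B$ and prove that it is automatically $F$-epic; then $B \in \gen_F(\add(M)) = \gen_F(M)$ by Lemma \ref{Sa12, 1.10}(c). To show $h$ is $F$-epic, I would first choose an $F$-epic $k : X' \to B$ with $X' \in \add(X)$ (possible since $F$ has enough projectives by Lemma \ref{FP-FI}(a)); by Lemma \ref{Sa12, 1.10}(f1), it suffices to factor $k$ through $h$. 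Since $X'$ is $F$-projective, $gk$ lifts through $\beta$ to some $k' : X' \to M^n$. Now apply $\Hom_\Lambda(M^n, -)$ to $\eta$: the vanishing $\Ext^1_F(M^n, A) = 0$ (from $M \in {}^{_F\perp_1}\gen_F(M)$ and $A \in \gen_F(M)$) yields an $h'' : M^n \to B$ with $gh'' = \beta$. Then $g(h''k' - k) = 0$, so $h''k' - k = ft$ for a unique $t : X' \to A$; lift $t$ through $\alpha$ to get $s : X' \to M^m$ with $\alpha s = t$. Finally, use the $\add(M)$-precover property of $h$ twice to produce $q' : M^n \to M'$ with $hq' = h''$ and $q : M^m \to M'$ with $hq = f\alpha$. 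A direct check gives $h(q'k' - qs) = h''k' - f\alpha s = h''k' - ft = k$, as desired.

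The main obstacle is the step where $\beta$ is lifted along $g$; this is precisely the point where the orthogonality hypothesis is indispensable, and it is what forces the $\add(M)$-precover to be $F$-epic rather than merely epic. The rest of the argument is a careful bookkeeping of lifts, using the $F$-projectivity of $X'$ to lift into $\add(M)$-sums and the precover property of $h$ to collect everything into a single morphism factoring $k$ through $h$.
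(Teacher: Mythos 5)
Your proposal is correct and follows essentially the same route as the paper's proof: the same reduction to showing that an $\add(M)$-precover $h:M'\to B$ is $F$-epic, the same use of $\Ext^1_F(M,A)=0$ to lift the $F$-epic onto $C$ along $g$, and the same bookkeeping of lifts culminating in $h(q'k'-qs)=k$ and Lemma \ref{Sa12, 1.10}~(f1). The only cosmetic difference is that you explicitly invoke Lemma \ref{Sa12, 1.10}~(c) to pass from $\gen_F(\add(M))$ to $\gen_F(M)$ at the end, which the paper leaves implicit.
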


\begin{defi}Let $M\in\modu(\Lambda)$. We say that $M$ is $\mathbf{gen_F}$-$\mathbf{minimal}$ if $M\neq0$ and $\forall$ $M'\;|\;M$ $(M'\in\mathrm{gen}_F(M/M')\Rightarrow M'=0)$.\end{defi}

\begin{pro}\label{p1.1, NAS81, 5.1}Let $M$, $X\in\modu(\Lambda)$, with $M\neq0$, such that $X$ is an additive generator and $F:=F_{\mathrm{add}(X)}$. Then, there exists $M'\;|\;M$ such that $M'$ is $\mathrm{gen}_F$-minimal and $\mathrm{gen}_F(M')=\mathrm{gen}_F(M).$
\begin{proof} The  proof will be carried on by induction on $n:=\ell(M),$ the length of $M.$  The case $n=1$ is clear. 
\

let $n>1$. If $M$ is gen$_F$-minimal there is nothing to prove. Thus, we can assume that $M$ is not $\mathrm{gen}_F$-minimal. Then, there exists $M'\neq0$ such that $M'\;|\;M$ and $M'\in\mathrm{gen}_F(M/M').$ Since $\ell(M')<\ell(M),$ by the inductive hypothesis, there is  some $Y\;|\;(M/M')$ which is $\mathrm{gen}_F$-minimal and $\mathrm{gen}_F(Y)=\mathrm{gen}_F(M/M').$
\

Let us show that $\mathrm{gen}_F(Y)\subseteq\mathrm{gen}_F(M).$ Let $Z\in \gen_F(Y).$ Then, there is an $F$-exact sequence 
$\eta:\;0\to K\to Y^m\to Z\to 0.$ Since $Y\,|\,M,$ there is some $Y_1$ such that $Y\oplus Y_1=M^n.$ Consider the $F$-exact sequence 
$\eta':\;0\to Y_1^m\to Y_1^m\to 0\to 0.$ Then, Lema \ref{Sa12, 1.10} (b), we get that $\eta\oplus\eta'$ is an $F$- exact sequence and thus $Z\in \gen_F(M).$
\

We prove that $\mathrm{gen}_F(M)\subseteq\gen_F(Y).$ Indeed, let $Z\in\gen_F(M)$. Then there are
 $F$-epics $(f,g): (M/M')^m\oplus (M')^m\to Z$ and $h: (M/M')^s\to(M')^m$. On the other hand, by Lemma \ref{FP-FI} (a,b), there is an $F$-epic $k:X'\to Z,$ with $X'\in\add(X)=\mathcal{P}(F).$ Since $(f,g) $ and $h$ are $F$-epics, there are $\begin{pmatrix} f'\\ g' \end{pmatrix}: X'\to (M/M')^m\oplus (M')^m$ such that $(f,g)\begin{pmatrix} f'\\ g' \end{pmatrix}=k$ and $h': X'\to (M/M')^s$ such that $hh'=g'.$ Hence $k=ff'+gg'=ff'+ghh'=(f,gh)\begin{pmatrix} f'\\ h' \end{pmatrix},$ and by Lemma \ref{Sa12, 1.10} (f1),  it follows that $(f,gh): (M/M')^m\oplus(M/M')^s\to Z$ is  $F$-epic. Thus, by Lemma \ref{Sa12, 1.10} (c),  $Z\in\mathrm{gen}_F(\mathrm{add}(M/M'))=\mathrm{gen}_F(M/M')$ and then $Z\in\mathrm{gen}_F(M/M')=\mathrm{gen}_F(Y).$
\end{proof}
\end{pro}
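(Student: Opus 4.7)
The plan is to induct on the length $\ell(M)$. The base case $\ell(M)=1$ is immediate: a simple nonzero module has only $0$ and itself as direct summands, so the $\gen_F$-minimality condition is vacuous and we may take $M'=M$. For the inductive step with $\ell(M)=n>1$, if $M$ is already $\gen_F$-minimal set $M'=M$; otherwise there exists $M_1\neq 0$ with $M_1\mid M$ and $M_1\in\gen_F(M/M_1)$. Then $M=M_1\oplus N$ with $N\cong M/M_1$, and since $M_1\neq 0$ we have $\ell(N)<n$; the inductive hypothesis applied to $N$ yields a summand $Y\mid N$ that is $\gen_F$-minimal and satisfies $\gen_F(Y)=\gen_F(N)$. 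The task is then to promote this to $\gen_F(Y)=\gen_F(M)$.

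For $\gen_F(Y)\subseteq\gen_F(M)$ I would observe that $Y\mid M$ by transitivity, write $M=Y\oplus Y_1$, and given an $F$-exact sequence $0\to K\to Y^m\to Z\to 0$, sum it with the trivial $F$-exact sequence $0\to Y_1^m\to Y_1^m\to 0\to 0$ and apply Lemma~\ref{Sa12, 1.10}(b) to conclude that $0\to K\oplus Y_1^m\to M^m\to Z\to 0$ is $F$-exact, placing $Z$ in $\gen_F(M)$.

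The heart of the argument, and what I expect to be the main obstacle, is the reverse inclusion $\gen_F(M)\subseteq \gen_F(Y)$. Given $Z\in\gen_F(M)=\gen_F(N\oplus M_1)$ there is an $F$-epic $(f,g)\colon N^m\oplus M_1^m\to Z$, and the assumption $M_1\in\gen_F(N)$ provides an $F$-epic $h\colon N^s\to M_1^m$. A naive attempt to paste these together fails, because there is no direct $F$-epic from $N^{m+s}$ onto $N^m\oplus M_1^m$; the fix is to route everything through an $F$-projective. By Lemma~\ref{FP-FI}(a,b) there is an $F$-epic $k\colon X'\to Z$ with $X'\in\add(X)=\mathcal{P}(F)$. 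Since $X'$ is $F$-projective, $k$ lifts through $(f,g)$ giving $f'\colon X'\to N^m$ and $g'\colon X'\to M_1^m$ with $ff'+gg'=k$, and $g'$ further lifts through $h$ giving $h'\colon X'\to N^s$ with $hh'=g'$. The rearrangement $k=ff'+ghh'=(f,gh)\binom{f'}{h'}$, combined with Lemma~\ref{Sa12, 1.10}(f1), upgrades $(f,gh)\colon N^m\oplus N^s\to Z$ to an $F$-epic, and Lemma~\ref{Sa12, 1.10}(c) then places $Z$ in $\gen_F(N)=\gen_F(Y)$.

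The critical point to guard against is the failure of $F$-epics to behave well under composition in general; the round-trip through the $F$-projective $X'$ is exactly what activates Lemma~\ref{Sa12, 1.10}(f1), and this step relies essentially on the hypothesis $F=F_{\add(X)}$, which via Lemma~\ref{FP-FI}(b) guarantees enough $F$-projectives concentrated in $\add(X)$.
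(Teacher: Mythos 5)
Your proposal is correct and follows essentially the same route as the paper's own proof: the same induction on length, the same decomposition $M=M_1\oplus N$ with the inductive hypothesis applied to $N\cong M/M_1$, and the same key step of lifting through an $F$-projective $X'\in\add(X)$ so that Lemma \ref{Sa12, 1.10}(f1) and (c) yield $\gen_F(M)\subseteq\gen_F(M/M_1)$. No gaps to report.
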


The following lemma will be very useful in the development of this paper.

\begin{lem}\label{l2, NAS81, 5.1} Let $X$, $N\in\mathrm{mod}(\Lambda)$, with $X$ an additive generator, $\Gamma:=\mathrm{End}_\Lambda(X)^{op}$ and $F:=F_{\mathrm{add}(X)}.$ Then, the following statements hold true.
\begin{itemize}
\item[(a)] $\mathrm{Hom}_\Lambda(X,-): \mathrm{mod}(\Lambda)\to\mathrm{mod}(\Gamma)$ is full and faithful.
\item[(b)] $\mathrm{Ext^i_F}(-,N)\cong\mathrm{Ext^i_\Gamma}(\mathrm{Hom}_\Lambda(X,-),\mathrm{Hom}_\Lambda(X,N))$ $\forall$ $i\geq1.$
\end{itemize}
\end{lem}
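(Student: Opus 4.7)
The overall plan is to exploit the restriction equivalence $e_X|_{\add(X)}\colon \add(X)\to \proj(\Gamma)$ from Proposition \ref{ASS06, VI.3.1}, combined with the fact that $\mathcal{P}(F)=\add(X)$ and $F$ has enough projectives (Lemma \ref{FP-FI}). Both parts will follow by transporting an $F$-projective resolution of $M$ in $\modu(\Lambda)$ to an ordinary projective resolution of $e_X(M):=\Hom_\Lambda(X,M)$ in $\modu(\Gamma)$, and then using that $e_X$ restricted to $\add(X)$ is fully faithful.

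For part (a), I would fix $M,N\in\modu(\Lambda)$ and, using Lemma \ref{FP-FI}(a,b), choose an $F$-exact presentation $X_1\xrightarrow{d} X_0\to M\to 0$ with $X_0,X_1\in\add(X)$. Since $X\in\mathcal{P}(F)$, applying $e_X$ yields a right exact sequence $e_X(X_1)\to e_X(X_0)\to e_X(M)\to 0$ in $\modu(\Gamma)$, which is a projective presentation by Proposition \ref{ASS06, VI.3.1}(b). Now I would form the two left-exact sequences obtained by applying $\Hom_\Lambda(-,N)$ and $\Hom_\Gamma(-,e_X(N))$ respectively, assemble them into a commutative ladder via the natural isomorphisms $\Hom_\Lambda(X_i,N)\cong\Hom_\Gamma(e_X(X_i),e_X(N))$ from Proposition \ref{ASS06, VI.3.1}(a), and conclude by a direct diagram chase (equivalently, by the five lemma) that the canonical map $\Hom_\Lambda(M,N)\to\Hom_\Gamma(e_X(M),e_X(N))$ induced by $e_X$ is an isomorphism.

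For part (b), I would pick an $F$-projective resolution $P^\bullet\to M$ with each $P^{-k}\in\add(X)$, which exists by Lemma \ref{FP-FI}(a,b). Since every differential in this resolution is an $F$-morphism between $F$-exact kernels and $X\in\mathcal{P}(F)$, applying $e_X$ gives an exact complex $e_X(P^\bullet)\to e_X(M)\to 0$; by Proposition \ref{ASS06, VI.3.1}(b), this is a projective resolution in $\modu(\Gamma)$. By definition, $\Ext^i_F(M,N)=H^i(\Hom_\Lambda(P^\bullet,N))$ and $\Ext^i_\Gamma(e_X(M),e_X(N))=H^i(\Hom_\Gamma(e_X(P^\bullet),e_X(N)))$. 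The naturality of the isomorphism of Proposition \ref{ASS06, VI.3.1}(a) in each term $P^{-k}$ produces an isomorphism of cochain complexes
\[
\Hom_\Lambda(P^\bullet,N)\;\cong\;\Hom_\Gamma(e_X(P^\bullet),e_X(N)),
\]
and taking cohomology delivers the desired isomorphism for each $i\geq 1$. Functoriality in the first variable follows from the usual comparison of projective resolutions, giving the natural isomorphism of functors asserted in the statement.

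The only real obstacle is bookkeeping: verifying that the identifications from Proposition \ref{ASS06, VI.3.1}(a) are compatible with the differentials of the resolution, i.e. that they assemble into a genuine morphism of complexes. Since $e_X$ is a functor and these isomorphisms are natural in both arguments, this compatibility is automatic, so the argument reduces to applying definitions. No deep new input beyond Proposition \ref{ASS06, VI.3.1} and Lemma \ref{FP-FI} is required.
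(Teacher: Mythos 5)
Your proposal is correct and follows essentially the same route as the paper: both transport an $F$-projective resolution with terms in $\add(X)$ through the evaluation functor to a projective resolution in $\modu(\Gamma)$, prove (a) by a five-lemma argument on the ladder of left-exact sequences obtained from $\Hom_\Lambda(-,N)$ and $\Hom_\Gamma(-,e_X(N))$, and prove (b) by identifying the two Hom-complexes via the natural isomorphisms of Proposition \ref{ASS06, VI.3.1}(a) before taking cohomology. No substantive difference.
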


\begin{proof} Let $M\in\mathrm{mod}(\Lambda)$  and $X_\bullet: \cdots\to X_1\overset{f_1}{\to}X_0\overset{f_0}{\to}M\to0$ be an  $F$-projective resolution  of $M,$ with $\{X_i\}_{i\in\mathbb{N}}\subseteq\add(X)$ (see Lemma \ref{FP-FI} (a,b)). In particular, we have the following chain complex, which is acyclic in the first two places,
$$0\to\mathrm{Hom}_\Lambda(M,N)\xrightarrow{\mathrm{Hom}_\Lambda(f_0,N)}\mathrm{Hom}_\Lambda(X_0,N)\xrightarrow{\mathrm{Hom}_\Lambda(f_1,N)}\mathrm{Hom}_\Lambda(X_1,N)\to\cdots.$$
By Proposition \ref{ASS06, VI.3.1}, we have the projective resolution of $\mathrm{Hom}_\Lambda(X,M)$ in $\modu(\Gamma)$
$$\cdots\to\mathrm{Hom}_\Lambda(X,X_1)\to\mathrm{Hom}_\Lambda(X,X_0)\to\mathrm{Hom}_\Lambda(X,M)\to0.$$

(a) We have the following exact and commutative diagram, where $\eta_{X_0}$ and $\eta_{X_1}$ are isomorphisms (see Proposition \ref{ASS06, VI.3.1})
$$\xymatrix{0\ar[d]&&&0\ar[d]\\
\mathrm{Hom}_\Lambda(M,N)\ar[rrr]^{\eta_N}\ar[d]_{\mathrm{Hom}_\Lambda(f_0,N)}&&&\mathrm{Hom}_\Gamma(\mathrm{Hom}_\Lambda(X,M),\mathrm{Hom}_\Lambda(X,N))\ar[d]_{\mathrm{Hom}_\Gamma(\mathrm{Hom}_\Lambda(X,f_0),\mathrm{Hom}_\Lambda(X,N))}\\
\mathrm{Hom}_\Lambda(X_0,N)\ar[rrr]^\sim_{\eta_{X_0}}\ar[d]_{\mathrm{Hom}_\Lambda(f_1,N)}&&&\mathrm{Hom}_\Gamma(\mathrm{Hom}_\Lambda(X,X_0),\mathrm{Hom}_\Lambda(X,N))\ar[d]_{\mathrm{Hom}_\Gamma(\mathrm{Hom}_\Lambda(X,f_1),\mathrm{Hom}_\Lambda(X,N))}\\
\mathrm{Hom}_\Lambda(X_1,N)\ar[rrr]^\sim_{\eta_{X_1}}&&&\mathrm{Hom}_\Gamma(\mathrm{Hom}_\Lambda(X,X_1),\mathrm{Hom}_\Lambda(X,N)).}$$
Then, by  Five Lemma,  it follows that $\eta_N$ is an isomorphism and thus $\mathrm{Hom}_\Lambda(X,-): \mathrm{mod}(\Lambda)\to\mathrm{mod}(\Gamma)$ is full and faithful.

(b) In the following commutative diagram, by Proposition \ref{ASS06, VI.3.1}, we have that the horizontal maps are isomorphisms $\forall$ $i\in\mathbb{N}$
$$\xymatrix{\mathrm{Hom}_\Lambda(X_i,N)\ar[rrr]_\sim^{\eta_{X_0}}\ar[d]_{\mathrm{Hom}_\Lambda(f_{i+1},N)}&&&\mathrm{Hom}_\Gamma(\mathrm{Hom}_\Lambda(X,X_i),\mathrm{Hom}_\Lambda(X,N))\ar[d]_{\mathrm{Hom}_\Gamma(\mathrm{Hom}_\Lambda(X,f_{i+1}),\mathrm{Hom}_\Lambda(X,N))}\\
\mathrm{Hom}_\Lambda(X_{i+1},N)\ar[rrr]^\sim_{\eta_{X_{i+1}}}&&&\mathrm{Hom}_\Gamma(\mathrm{Hom}_\Lambda(X,X_{i+1}),\mathrm{Hom}_\Lambda(X,N)).}$$
Therefore, $\mathrm{Hom}_\Lambda(X_\bullet,N)$ and $\mathrm{Hom}_\Gamma(\mathrm{Hom}_\Lambda(X,X_\bullet),\mathrm{Hom}_\Lambda(X,N))$ are isomorphic as a chain complexes and thus, for any $i\geq 1,$
\begin{equation*}
\begin{split}
\Ext^i_F(M,N) & = \mathrm{H}^i(\Hom_\Lambda(X_\bullet,N))\\
                      & \simeq \mathrm{H}^i(\mathrm{Hom}_\Gamma(\mathrm{Hom}_\Lambda(X,X_\bullet),\mathrm{Hom}_\Lambda(X,N)))\\
                      & = \Ext^i_\Gamma(\mathrm{Hom}_\Lambda(X,M),\mathrm{Hom}_\Lambda(X,N)).
\end{split}
\end{equation*}
\end{proof}

\begin{pro}\label{p1, NAS81, 5.1} Let $X$ be an additive generator in $\modu(\Lambda),$  $f: M\to N$ be in $\modu(\Lambda)$ and 
$\Gamma:=\mathrm{End}_\Lambda(X)^{op}.$ Then, the following statements hold true.
\begin{itemize}
\item[(a)] $f$ is left (right) minimal in $\modu(\Lambda)$ if, and only if, $\mathrm{Hom}_\Lambda(X,f)$ is left (right) minimal in 
$\modu(\Gamma).$

\item[(b)] $f$ is $F$-epic if, and only if, $\mathrm{Hom}_\Lambda(X,f)$ is an epimorphism in $\modu(\Gamma).$
\end{itemize}
\end{pro}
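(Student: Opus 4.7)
The plan is to leverage the fully faithful embedding $e_X:=\mathrm{Hom}_\Lambda(X,-)\colon \modu(\Lambda)\to\modu(\Gamma)$ given by Lemma \ref{l2, NAS81, 5.1} (a), together with the explicit description of $F$-exact sequences coming from $F=F_{\add(X)}$.

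\textbf{Part (a).} I would begin by noting that any fully faithful functor reflects and preserves isomorphisms: if $e_X(g)$ has a two-sided inverse $h$, then by fullness $h=e_X(g')$ for some $g'$, and faithfulness applied to $e_X(gg')=e_X(1)=e_X(g'g)$ yields $gg'=1=g'g$. For the equivalence on left minimality, the forward direction proceeds by taking $h\colon e_X(N)\to e_X(N)$ with $h\,e_X(f)=e_X(f)$, writing $h=e_X(g)$ by fullness, deducing $gf=f$ by faithfulness, and applying left minimality of $f$ to conclude $g$ is iso, whence $h=e_X(g)$ is iso. The converse is symmetric: given $g$ with $gf=f$, one gets $e_X(g)\,e_X(f)=e_X(f)$, so $e_X(g)$ is iso by left minimality of $e_X(f)$, and therefore $g$ is iso since $e_X$ reflects isos. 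The right-minimal case is proved by the same argument with arrows reversed.

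\textbf{Part (b).} Here I would use that $\mathcal{P}(F)=\add(X)$ by Lemma \ref{FP-FI} (b), together with the characterization in Lemma \ref{Sa12, 1.10} (e). For the forward direction, if $f$ is $F$-epic then there exists an $F$-exact sequence $0\to K\to M\xrightarrow{f} N\to 0$, and testing against the $F$-projective $X$ shows $\Hom_\Lambda(X,f)$ is surjective, hence an epimorphism in $\modu(\Gamma)$ (epis in a module category are exactly the surjections). For the converse, suppose $\Hom_\Lambda(X,f)$ is an epimorphism, hence surjective. Since $X$ is an additive generator, $\gen(X)=\modu(\Lambda)$, so every element of $N$ lies in the image of some morphism $X^n\to N$; such morphisms factor through $f$ by surjectivity of $\Hom_\Lambda(X,f)$, forcing $f$ to be surjective in $\modu(\Lambda)$. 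This produces an exact sequence $0\to\Ker(f)\to M\xrightarrow{f} N\to 0$, and it is $F$-exact by Lemma \ref{Sa12, 1.10} (e), since $\Hom_\Lambda(X',-)$ preserves its exactness for every $X'\in\add(X)=\mathcal{P}(F)$ (the hypothesis gives this for $X$, and finite direct sums and summands pass through).

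The only subtle point I anticipate is the careful use of the fully faithfulness argument in (a) to reflect isomorphisms; once that is in hand, both parts reduce to routine diagram manipulations combined with the identifications $\mathcal{P}(F)=\add(X)$ and the translation of surjectivity in $\modu(\Gamma)$ into the defining property of $F_{\add(X)}$-exact sequences.
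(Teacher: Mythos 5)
Your proof is correct. Part (a) is essentially identical to the paper's argument: both use the fullness and faithfulness of $\Hom_\Lambda(X,-)$ from Lemma \ref{l2, NAS81, 5.1} (a) to transport the left/right minimality test back and forth, together with the fact that a fully faithful functor reflects isomorphisms. In part (b) the forward direction is the same (and is in fact immediate from the definition of $F_{\add(X)}$; the paper omits it), but your converse takes a mildly different route. The paper starts from an $F$-exact presentation $0\to K\to X'\xrightarrow{g}N\to 0$ with $X'\in\add(X)$ (Lemma \ref{FP-FI} (a,b)), uses projectivity of $\Hom_\Lambda(X,X')$ in $\modu(\Gamma)$ plus fullness to factor $g=fk$, and then concludes that $f$ is $F$-epic by the cancellation property of Lemma \ref{Sa12, 1.10} (f1) --- never needing to exhibit the kernel sequence of $f$ explicitly. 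You instead first deduce surjectivity of $f$ from $\gen(X)=\modu(\Lambda)$ and then verify directly that $0\to\Ker(f)\to M\to N\to 0$ satisfies the defining condition of $F_{\add(X)}$-exactness (equivalently, the criterion of Lemma \ref{Sa12, 1.10} (e)), using that surjectivity of $\Hom_\Lambda(X,f)$ propagates to all of $\add(X)$. Both arguments are short and valid; the paper's avoids checking surjectivity of $f$ by hand, while yours is slightly more self-contained in that it only uses the characterization (e) rather than the derived cancellation property (f1).
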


\begin{proof}(a)   Let $f$ be left minimal in $\modu(\Lambda).$ Consider some $g: \mathrm{Hom}_\Lambda(X,N)\to\mathrm{Hom}_\Lambda(X,N)$  in $\modu(\Gamma)$ with 
$g\mathrm{Hom}_\Lambda(X,f)=\mathrm{Hom}_\Lambda(X,f).$ By Lemma \ref{l2, NAS81, 5.1} (a) there exists $k: N\to N$ such that $g=\mathrm{Hom}_\Lambda(X,k)$ and thus $kf=f.$ Therefore $k$ is an isomorphism and then $g$ is an isomorphism.
\

Let  $\mathrm{Hom}_\Lambda(X,f)$ be left minimal in $\modu(\Gamma).$ Consider some $h: N\to N$ with $hf=f.$ Then $\mathrm{Hom}_\Lambda(X,h)\mathrm{Hom}_\Lambda(X,f)=\mathrm{Hom}_\Lambda(X,f)$ and therefore $\mathrm{Hom}_\Lambda(X,h)$ is an isomorphism. Hence, from Lemma \ref{l2, NAS81, 5.1} (a),  it follows that $h$ is an isomorphism.
\

(b)  Let $\mathrm{Hom}_\Lambda(X,f)$ be an epimorphism in $\modu(\Gamma).$ By Lemma \ref{FP-FI} (a,b), there is an  $F$-exact 
sequence $0\to K\to X'\overset{g}{\to}N\to0,$ with $X'\in\mathrm{add}(X).$  Since $\Hom_\Lambda(X,f)$ is 
an epimorphism in $\modu(\Gamma)$ and $\Hom_\Lambda(X,X')\in\proj(\Gamma)$ (see Lemma \ref{ASS06, VI.3.1}) (b)), by Lemma 
\ref{l2, NAS81, 5.1} (a), there exists 
$k: X^m\to M$ such that $g=fk.$ Moreover, by using that $g$ is $F$-epic, we get from Lemma \ref{Sa12, 1.10} (f1) that $f$ is  $F$-epic. 
\end{proof}

We recall that  $\ind(\Lambda)$ denotes the  set of representatives of iso-classes of indecomposables  in $\modu(\Lambda).$

The next result is the relative version of \cite[Proposition 5.1]{AS81}.

\begin{pro}\label{NAS81, 5.1} Let $M\in\modu(\Lambda),$ $X$ be an additive generator in $\modu(\Lambda)$ and $F:=F_{\add(X)}.$ If  
$M$ is $\mathrm{gen}_F$-minimal and $\mathrm{gen}_F(M)$ is an $F$-torsion class, then  $M\in{}^{_F\perp_1}\mathrm{gen}_F(M).$
\end{pro}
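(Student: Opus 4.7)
The plan is to argue by contradiction. Suppose there exists $N \in \gen_F(M)$ with $\Ext^1_F(M,N) \neq 0$; pick a non-split $F$-exact sequence $\eta\colon 0 \to N \xrightarrow{u} E \xrightarrow{v} M \to 0$. Since $\gen_F(M)$ is closed under $F$-extensions and both $N, M \in \gen_F(M)$, we have $E \in \gen_F(M)$. Hence there is an $F$-epic $\rho\colon M^n \to E$, and by Lemma \ref{Sa12, 1.10}(f2) the composite $v\rho\colon M^n \to M$ is $F$-epic. The overall goal is to produce a nonzero direct summand $M_j \mid M$ with $M_j \in \gen_F(M/M_j)$, which will contradict the $\gen_F$-minimality of $M$.

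First, observe that $\gen_F$-minimality forces $M$ to be basic, since a repeated indecomposable summand $M_0$ would satisfy $M_0 \in \gen_F(M_0) \subseteq \gen_F(M/M_0)$. Write $M = \bigoplus_{i=1}^t M_i$ with pairwise non-isomorphic indecomposable $M_i$ and expand $v\rho = (\alpha_{ji})$ with $\alpha_{ji}\colon M_i^n \to M_j$. I would then establish a splitting dichotomy: if every diagonal block $\alpha_{jj}$ had an isomorphism among its $n$ components $M_j \to M_j$, choosing a section $s_j$ of each and assembling $\sigma := \bigoplus_j \iota_j s_j \colon M \to M^n$, the composite $v\rho\,\sigma\colon M \to M$ equals $\mathrm{id}_M + R$ where $R$ has zero diagonal and off-diagonal entries in $\Hom_\Lambda(M_j, M_k) = \rad_\Lambda(M_j, M_k)$ for $j \neq k$. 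Hence $R \in \rad(\End_\Lambda(M))$, $\mathrm{id}_M + R$ is a unit, and $v\rho$ is split epi; but then so is $v$, splitting $\eta$ and contradicting its non-splitness. Therefore some $j$ exists for which every component of $\alpha_{jj}$ lies in $\rad(\End_\Lambda(M_j))$.

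For this $j$, write $p_j v\rho = (\alpha_{jj}, \beta_j) \colon M_j^n \oplus (M/M_j)^n \to M_j$, which is $F$-epic (composition of $F$-epics via the split epi $p_j$). The remaining step is to show that $\beta_j$ is $F$-epic, since then Lemma \ref{Sa12, 1.10}(c) yields $M_j \in \gen_F(M/M_j)$, contradicting $\gen_F$-minimality. I would reduce to $\modu(\Gamma)$ by applying $\Hom_\Lambda(X,-)$ and invoking Proposition \ref{p1, NAS81, 5.1}(b): it suffices to show that $\Hom_\Lambda(X, \beta_j)$ is surjective. From the $F$-epicness of $p_j v\rho$ one gets
$$\mathrm{Im}\bigl(\Hom_\Lambda(X,\alpha_{jj})\bigr) + \mathrm{Im}\bigl(\Hom_\Lambda(X,\beta_j)\bigr) = \Hom_\Lambda(X, M_j),$$
so by Nakayama's lemma for the $\Gamma$-module $\Hom_\Lambda(X, M_j)$, surjectivity of $\Hom_\Lambda(X, \beta_j)$ follows once $\mathrm{Im}(\Hom_\Lambda(X,\alpha_{jj})) \subseteq \rad_\Gamma \Hom_\Lambda(X, M_j)$.

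The main obstacle is precisely this last inclusion. The key tool is the ring isomorphism $\End_\Lambda(M_j) \xrightarrow{\sim} \End_\Gamma(\Hom_\Lambda(X, M_j))$ afforded by Lemma \ref{l2, NAS81, 5.1}(a), which preserves Jacobson radicals and maps each component of $\alpha_{jj}$ to a non-unit of the local ring $\End_\Gamma(\Hom_\Lambda(X, M_j))$. Combining this with a careful analysis of how post-composition by $\rad(\End_\Lambda(M_j))$ interacts with the $\Gamma$-module structure on $\Hom_\Lambda(X, M_j)$ delivers the required containment. Once this is in hand, Nakayama gives $\beta_j$ is $F$-epic, producing the summand $M_j \mid M$ with $M_j \in \gen_F(M/M_j)$ and the sought contradiction with $\gen_F$-minimality.
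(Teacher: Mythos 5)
Your overall strategy is the same as the paper's: argue by contradiction, transport the problem to $\modu(\Gamma)$ via $\Hom_\Lambda(X,-)$ and Proposition \ref{p1, NAS81, 5.1} (b), isolate an indecomposable summand $M_j$ of $M$ all of whose diagonal comparison maps $M_j\to M_j$ are radical, and finish with a Nakayama argument giving $M_j\in\gen_F(M/M_j)$, contradicting $\gen_F$-minimality. Where you differ is the reduction: the paper starts from a non-split $F$-exact sequence $0\to Z\to E\xrightarrow{g} Y\to 0$ ending in an \emph{indecomposable} summand $Y\mid M$ (possible by additivity of $\Ext^1_F(-,Z)$), so that by Lemma \ref{ABM98, 2.5} one has $g\in\rad_\Lambda(E,Y)$ and every component $gh_i$ is automatically radical; you instead keep $M$ whole and run a matrix dichotomy (either sections of the diagonal blocks make $v\rho$ a split epimorphism, contradicting non-splitness of $\eta$, or some block $\alpha_{jj}$ is entirely radical). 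Your dichotomy is correct, just longer than the paper's reduction.

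The genuine gap is exactly the step you flag as ``the main obstacle,'' and the route you sketch around it does not suffice. Knowing that each component $\alpha_{jj}^{(l)}$ is a non-unit of the local ring $\End_\Lambda(M_j)\cong\End_\Gamma(\Hom_\Lambda(X,M_j))$ only tells you that $\Hom_\Lambda(X,\alpha_{jj}^{(l)})$ is a non-invertible endomorphism of the indecomposable $\Gamma$-module $\Hom_\Lambda(X,M_j)$; it does \emph{not} follow that its image lies in $\rad_\Gamma\Hom_\Lambda(X,M_j)$. A non-invertible endomorphism of an indecomposable module of finite length has proper image, but a proper submodule need not sit inside the Jacobson radical of the module (for instance, over the Kronecker algebra the canonical radical endomorphism of a regular module of regular length $2$ has image a regular submodule of length $2$, which has nonzero top and so is not contained in the radical of the module). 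Hence ``non-unit'' alone cannot feed Nakayama, and the ``careful analysis'' you defer is precisely the mathematical content of the step: the paper obtains the containment from the fact that $\Hom_\Lambda(X,g)$ lies in the radical of the category, $\rad_\Gamma(\Hom_\Lambda(X,E),\Hom_\Lambda(X,Y))$, combined with \cite[Lemma A.3.4 (b)]{ASS06}. Without supplying that argument (or an equivalent one) your proof is incomplete at its decisive point.
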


\begin{proof} Let $M$ be $\mathrm{gen}_F$-minimal and $\mathrm{gen}_F(M)$ be an $F$-torsion class. Suppose that   $M\notin{}^{_F\perp_1}\mathrm{gen}_F(M)$. Then there are $Y\in\mathrm{ind}(\Lambda)$ such that $Y\;|\;M$, $Z\in\gen_F(M)$ and a non splitting  $F$-exact sequence $\eta: 0\to Z\overset{f}{\to}E\overset{g}{\to}Y\to0.$ Then $g\in\mathrm{rad}_\Lambda(E,Y)$ and thus,  from Lemma \ref{ABM98, 2.5}, it follows that $f$ is  left minimal. Hence, by  Proposition \ref{p1, NAS81, 5.1} (a),  $\mathrm{Hom}_\Lambda(X,f)$ is left minimal. Moreover, from the exact sequence 
$$0\to\mathrm{Hom}_\Lambda(X,Z)\xrightarrow{\mathrm{Hom}_\Lambda(X,f)}\mathrm{Hom}_\Lambda(X,E)\xrightarrow{\mathrm{Hom}_\Lambda(X,g)}\mathrm{Hom}_\Lambda(X,Y)\to0,$$  
it follows  by Lemma \ref{ABM98, 2.5},  that $\mathrm{Hom}_\Lambda(X,g)\in\mathrm{rad_\Gamma}(\mathrm{Hom_\Lambda}(X,E),\mathrm{Hom_\Lambda}(X,Y)),$ where $\Gamma:=\mathrm{End}_\Lambda(X)^{op}.$ Since $\gen_F(M)$ is closed under extensions, from 
 $\eta$ we get that $E\in\gen_F(M).$ Hence, there is an $F$-epic $(h,k): Y^m\oplus(M/Y)^m\to E,$ with $h=(h_1,\cdots,h_m).$ By the fact that $(gh,gk)$ is  $F$-epic (see Lemma \ref{Sa12, 1.10} (f2)), it follows that
$$\small{\xymatrix{(\mathrm{Hom_\Lambda}(X,gh),\mathrm{Hom_\Lambda}(X,gk)): \mathrm{Hom}_\Lambda(X,Y^m)\oplus\mathrm{Hom}_\Lambda(X,(M/Y)^m)\to\mathrm{Hom}_\Lambda(X,Y)}}$$
 is an epimorphism. Therefore
$$\footnotesize{\xymatrix{\mathrm{Hom}_\Lambda(X,Y)=\sum_{i=1}^m\mathrm{Hom}_\Lambda(X,gh_i)(\mathrm{Hom}_\Lambda(X,Y))+\mathrm{Hom}_\Lambda(X,gk)(\mathrm{Hom}_\Lambda(X,(M/Y)^m)).}}$$
Since $\mathrm{Hom}_\Lambda(X,g)\in\mathrm{rad_\Gamma}(\mathrm{Hom_\Lambda}(X,E),\mathrm{Hom_\Lambda}(X,Y)),$ we get from \cite[Lemma A.3.4 (b)]{ASS06} that $\sum_{i=1}^m\mathrm{Hom}_\Lambda(X,gh_i)(\mathrm{Hom}_\Lambda(X,Y))\subseteq
\mathrm{rad}_\Gamma(\Hom_\Lambda (X,Y)).$ Then, by  Nakayama's lemma, it follows that 
$$\mathrm{Hom}_\Lambda(X,Y)=\mathrm{Hom}_\Lambda(X,gk)(\mathrm{Hom}_\Lambda(X,(M/Y)^m));$$
proving that $\mathrm{Hom}_\Lambda(X,gk)$ is an epimorphism in $\modu(\Gamma).$ Then, by Proposition \ref{p1, NAS81, 5.1} (b), we have that $gk: (M/Y)^m\to Y$ is  $F$-epic, which is a contradiction since $M$ is $\mathrm{gen}_F$-minimal. Thus $M\in{}^{_F\perp_1}\mathrm{gen}_F(M)$.
\end{proof}

\begin{lem}\label{l1, NHo83} Let $X$ be  an additive generator in $\modu(\Lambda),$ $F:=F_{\mathrm{add}(X)}$ and $T\in\modu(\Lambda)$ be $F$-tilting. Then, $\gen_F(T)=T^{{}_F\perp_1}.$
\end{lem}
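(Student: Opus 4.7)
The plan is to prove both inclusions separately, mimicking the classical proof that $\gen(T)=T^{\perp_1}$ in ordinary tilting theory, but carried out in the relative setting using the machinery already developed in the excerpt.

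For $\gen_F(T)\subseteq T^{{}_F\perp_1}$, I would pick $N\in\gen_F(T)$ and choose an $F$-exact sequence $0\to K\to T^m\to N\to0$. Applying the long exact sequence for $\Ext_F^\ast(T,-)$ (available since $F$ has enough projectives by Lemma \ref{FP-FI}(a)) produces
\[
\Ext^1_F(T,T^m)\to\Ext^1_F(T,N)\to\Ext^2_F(T,K).
\]
The left term vanishes because $T\in T^{{}_F\perp}$, and the right term vanishes because $\pd_F(T)\leq 1$ via Lemma \ref{FP-FI}(d). Hence $\Ext^1_F(T,N)=0$, that is $N\in T^{{}_F\perp_1}$.

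For the reverse inclusion $T^{{}_F\perp_1}\subseteq\gen_F(T)$, I would use a Bongartz-style pushout argument. Let $N\in T^{{}_F\perp_1}$. By Lemma \ref{FP-FI}(a,b) pick an $F$-exact sequence $0\to K\to X'\to N\to 0$ with $X'\in\add(X)=\mathcal{P}(F)$. The $F$-tilting hypothesis applied to $X'\in\mathcal{P}(F)$ supplies an $F$-exact sequence $0\to X'\to T_0\to T_1\to 0$ with $T_0,T_1\in\add(T)$. Taking the pushout of $X'\to N$ and $X'\to T_0$ yields the $3\times 3$ commutative diagram of Lemma \ref{Sa12, 1.10}(d), whose middle row and first column are both already $F$-exact. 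That lemma then forces all rows and columns to be $F$-exact; in particular the bottom row is an $F$-exact sequence $0\to N\to Z\to T_1\to 0$ and the middle column is an $F$-exact sequence $0\to K\to T_0\to Z\to 0$. Since $T_1\in\add(T)$ and $N\in T^{{}_F\perp_1}$, we have $\Ext^1_F(T_1,N)=0$, so the bottom row splits and $Z\simeq N\oplus T_1$. Composing the $F$-epic $T_0\to Z$ with the split projection $Z\to N$, which is $F$-epic as any split epic is, gives by Lemma \ref{Sa12, 1.10}(f2) an $F$-epic $T_0\to N$ with $T_0\in\add(T)$. Thus $N\in\gen_F(\add(T))=\gen_F(T)$ by Lemma \ref{Sa12, 1.10}(c).

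I expect the main obstacle to be the careful verification that the pushout diagram indeed fits the hypothesis of Lemma \ref{Sa12, 1.10}(d): one must identify the correct middle row and first column as the two $F$-exact sequences we started with, and only then are all the other rows and columns automatically $F$-exact. The remaining steps, namely the splitting of the bottom row and the composition of $F$-epics, are routine applications of the orthogonality hypothesis and the closure properties of $F$-exact sequences already recorded in the preliminaries.
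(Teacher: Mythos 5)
Your proposal is correct and follows essentially the same route as the paper: the first inclusion via the long exact sequence using $\Ext^1_F(T,T^m)=0$ and $\pd_F(T)\leq 1$, and the second via the pushout of an $F$-projective cover of $N$ along the $F$-coresolution of $X'$ in $\add(T)$, followed by the splitting of the resulting bottom row and the composition of $F$-epics. The only cosmetic difference is that the paper justifies the $F$-epicity of the split projection $Z\to N$ by Lemma \ref{Sa12, 1.10} (f1) applied to $qi=1_N$, whereas you observe directly that split epimorphisms are $F$-epic; both are valid.
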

\begin{proof} By Lemma \ref{FP-FI}  (a,b),  $F$ has enough projectives and $\mathcal{P}(F)=\add(X).$ 
\

Let $M\in T^{_F\perp_1}$. Since $F$ has enough projectives, by Lemma \ref{Sa12, 1.10} (b),  there is an $F$-epic $p: X'\to M,$ where $X'\in\add(X).$  Using now that $T$ is $F$-tilting, there is an $F$-exact sequence $0\to X'\to T'\to T''\to 0,$ where 
$T',$ $T''\in\add(T).$ Then, by Lemma \ref{Sa12, 1.10} (d), we get the following $F$-exact and commutative diagram 
$$\xymatrix{0\ar[r]&X'\ar[r]\ar[d]_p&T'\ar[r]\ar[d]^{p'}&T''\ar@{=}[d]\ar[r]&0\\
0\ar[r]&M\ar[r]_i&E\ar[r]&T''\ar[r]&0,}$$
where $p': T'\to E$ is $F$-epic. Since $M\in T^{_F\perp_1},$ the second row in the above diagramm splits and thus there exists 
some $q: E\to M$ such that $qi=1_M.$ Then, by Lemma \ref{Sa12, 1.10} (f1),  $q: E\to M$ is  $F$-epic. Thus, by Lemma \ref{Sa12, 1.10} (f2), $qp': T'\to M$ is  $F$-epic. Then, we conclude that $M\in\mathrm{gen}_F(\add(T))=\gen_F(T)$ 
(see Lemma \ref{Sa12, 1.10} (c)) and thus $T^{_F\perp_1}\subseteq\mathrm{gen}_F(T).$

For $N\in\mathrm{gen}_F(T),$ there is  an $F$-exact sequence $0\to K\to T^n\to N\to0.$ By  applying $\mathrm{Hom}_\Lambda(T,-)$ to the previous exact sequence, we have the exact sequence, $0=\mathrm{Ext^1_F}(T,T^n)\to\mathrm{Ext^1_F}(T,N)\to\mathrm{Ext^2_F}(T,K)=0$ since $T$ is $F$-tilting. Therefore $N\in T^{_F\perp_1}$ and thus $\gen_F(T)\subseteq T^{_F\perp_1}.$
\end{proof}

Now, we are ready to give a proof of {\bf Theorem \ref{NHo83}}:

\begin{proof} (a) $\Rightarrow$ (b) Let $X=\oplus_{i=1}^nX_i$, where each $X_i$ is indecomposable. By Proposition \ref{NAS80, 4.6} (a) there exists $T'\in\mathcal{T}$ such that $\mathcal{T}=\gen_F(T').$ Moreover, from Proposition \ref{p1.1, NAS81, 5.1} there exists $T''\;|\;T'$ which is gen$_F$-minimal and $\mathcal{T}=\gen_F(T'').$ On the other hand, since $\mathcal{T}$ is 
$F$-preenveloping and $\T=\smd(\T)$ (see Lemma \ref{T=addT}), it follows by  Lemma \ref{FP-FI} (c) that 
$\mathcal{I}(F)\subseteq\mathcal{T}.$ Then, by the Proposition \ref{NAS80, 4.6} (b1), for every $k\in[1,n],$ there is an $F$-exact sequence 
$\eta_k: 0\to X_k\overset{i_k}{\to}\overline{T}_k\to\overline{C}_k\to0$ such that $i_k: X_k\to\overline{T}_k$ is a 
$\mathcal{T}$-preenvelope and $\overline{T}_k\in\add(T'')$. Now, by Lemma \ref{Sa12, 1.10} (b), we get the $F$-exact sequence 
$\eta:=\oplus_{k=1}^n\eta_k: 0\to X\overset{i}{\to}\overline{T}\to\overline{C}\to0$, where $\overline{T}:=\oplus_{k=1}^n\overline{T}_k,$ $\overline{C}:=\oplus_{k=1}^n\overline{C}_k$ and $i:=\oplus_{k=1}^ni_k$ is a $\mathcal{T}$-preenvelope. We assert that  
$T:=\overline{T}\oplus\overline{C}$ is $F$-tilting and $\mathcal{T}=\mathrm{gen}_F(T).$ Indeed, we will split the proof of this assertion into  the following parts to verify.

(i) $\mathcal{T}=\mathrm{gen}_F(T)$:

Indeed, to prove (i), it is enough to show that $\mathrm{gen}_F(T)=\mathrm{gen}_F(T'').$ Since $\overline{T}\in\add(T''),$ we have by 
Lemma \ref{Sa12, 1.10} (c) that 
$\mathrm{gen}_F(\overline{T})\subseteq\mathrm{gen}_F(\add(T''))=\mathrm{gen}_F(T'').$ Now, by Lemma \ref{FP-FI} (a,b),   there is an 
$F$-exact sequence $\epsilon: 0\to K\to X^m\overset{g}{\to}T''\to0.$ Furthermore, from the $F$-exact sequence $\eta$ and  Lemma \ref{Sa12, 1.10} (b), we obtain the $F$-exact sequence $\eta^m: 0\to X^m\overset{i^m}{\to}\overline{T}^m\to\overline{C}^m\to0.$ Then,  by Lemma \ref{Sa12, 1.10} (d),  we get the following $F$-exact and commutative diagram in $\modu(\Lambda)$
$$\xymatrix{&&0\ar[d]&0\ar[d]\\
&&K\ar[d]\ar@{=}[r]&K\ar[d]\\
\eta^m:&0\ar[r]&X^m\ar[r]^{i^m}\ar[d]_g&\bar{T}^m\ar[r]\ar[d]^{g'}&\bar{C}^m\ar@{=}[d]\ar[r]&0\\
\lambda:&0\ar[r]&T''\ar[r]_j\ar[d]&E\ar[d]\ar[r]&\bar{C}^m\ar[r]&0\\
&&0&0.}$$

Since $i^m: X^m\to\bar{T}^m$ is a $\mathcal{T}$-preenvelope and $T''\in\mathcal{T}$, we have that $g$ can be factored through $i^m$ and therefore $\lambda$ splits. Thus, there exists $j': E\to T''$ such that $j'j=1_{T''}.$ In particular, by Lemma \ref{Sa12, 1.10} (f1),  $j'$ is $F$-epic. Then, by  Lemma \ref{Sa12, 1.10} (f2),  it follows that $j'g': \overline{T}^m\to T''$ is  $F$-epic and thus 
$\mathrm{gen}_F(T'')\subseteq\mathrm{gen}_F(\overline{T}).$

(ii) $T\in{}^{_F\perp_1}\mathcal{T}$:

Since $T''$ is $\gen_F$-minimal and $\T=\gen_F(T''),$ it follows from Proposition \ref{NAS81, 5.1} that $\overline{T}\in{}^{_F\perp_1}\mathcal{T}$. We show now that $\overline{C}\in{}^{_F\perp_1}\mathcal{T}$. Indeed, by applying $\mathrm{Hom}_\Lambda(-,Y)$, with $Y\in\mathcal{T}$, to the $F$-exact sequence $\eta$ and since $\mathrm{Ext^1_F}(\overline{T},Y)=0,$ we get the exact sequence 
$$\mathrm{Hom}_\Lambda(\overline{T},Y)\xrightarrow{\mathrm{Hom}_\Lambda(i,Y)}\mathrm{Hom}_\Lambda(X,Y)\to\mathrm{Ext^1_F}(\overline{C},Y)\to 0.$$
From the above exact sequence and  since $i$ is a $\mathcal{T}$-preenvelope, it follows that $\mathrm{Ext^1_F}(\overline{C},Y)=0.$

(iii) $T\in{}^{_F\perp_1}T$: 

It follows from (ii), since $T\in\mathcal{T}.$

(iv) For any $Q\in\mathcal{P}(F),$  there is an $F$-exact sequence $0\to Q\to T_0\to T_1\to 0$ with $T_0,T_1\in\add(T).$ 

Indeed, by Lemma \ref{FP-FI} (b), we know that $\mathcal{P}(F)=\add(X).$  Let $Q\in\add(X).$ Since $\{\eta_k\}_{k=1}^n$ is a family of 
$F$-exact sequences and each $X_k$ is indecomposable, by Lemma \ref{Sa12, 1.10} (b), there is an $F$-exact sequence 
$0\to Q\to T_0\to T_1\to0,$ where $T_0\in\mathrm{add}(\overline{T})\subseteq\mathrm{add}(T)$ and $T_1\in\mathrm{add}(\overline{C})\subseteq\mathrm{add}(T)$. 

(v) $\mathrm{pd}_F(T)\leq1$: 

By Lemma \ref{FP-FI} (d),  it is enough to show that $\Ext^2_F(T,-)=0$. Let $M\in\modu(\Lambda)$. By Lemma \ref{FP-FI} (a),  there is an 
$F$-exact sequence $\theta:\;0\to M\to I\to L\to0$, with $I\in\mathcal{I}(F)\subseteq\mathcal{T}.$ Since $\mathcal{T}$ is $F$-torsion, we have that $L\in\mathcal{T}.$ Then, by applying the functor $\mathrm{Hom}_\Lambda(T,-)$ to the $F$-exact sequence $\theta,$ we obtain the following exact sequence $\mathrm{Ext^1_F}(T,L)\to\mathrm{Ext^2_F}(T,M)\to\mathrm{Ext^2_F}(T,I).$ Then, by (ii) we know that $\mathrm{Ext^1_F}(T,L)=0$ and since $\mathrm{Ext^2_F}(T,I)=0,$ we conclude that $\mathrm{Ext^2_F}(T,M)=0$ for any $M\in\modu(\Lambda).$

(b) $\Rightarrow$ (a) Let $T\in\modu(\Lambda)$ be an $F$-tilting such that $\mathcal{T}=\mathrm{gen}_F(T).$ We prove firstly that  $\mathrm{gen}_F(T)$ is an $F$-torsion class. In order to do that, from  Lemma \ref{NAS81, 5.10}, it is enough to show that $\mathcal{T}\subseteq{}^{_F\perp_1}\mathrm{gen}_F(T).$ Let $M\in\mathrm{gen}_F(T)$. Then there is an $F$-exact sequence $0\to K\to T^m\to M\to0,$ and thus, we obtain the exact sequence $0=\mathrm{Ext^1_F}(T,T^m)\to\mathrm{Ext^1_F}(T,M)\to\mathrm{Ext^2_F}(T,K)=0$ since $\mathrm{pd}_F(T)\leq1;$ proving that $\mathrm{gen}_F(T)$ is a $F$-torsion class. 

Now, to show that $\T$ is $F$-preenveloping, by Proposition \ref{NAS80, 4.6} (b), it is enough to prove that $\mathcal{I}(F)\subseteq\mathcal{T}$. Let $I\in\mathcal{I}(F).$ Then, by Lemma \ref{FP-FI} (a,b), there is an $F$-epic $p: X'\to I,$ with $X'\in\add(X)=\mathcal{P}(F).$ Now, since $T$ is $F$-tilting, there is an $F$-exact sequence $\eta: 0\to X'\overset{i}{\to}T'\to T''\to 0,$ with $T',$ $T''\in\add(T).$ Consider the following exact commutative diagram in $\modu(\Lambda)$
$$\xymatrix{\eta:&0\ar[r]&X'\ar[r]^i\ar[d]_p&T'\ar[r]\ar[d]^h&T''\ar@{=}[d]\ar[r]&0\\
\epsilon:&0\ar[r]&I\ar[r]_j&E\ar[r]&T''\ar[r]&0.}$$
Then, by Lemma \ref{Sa12, 1.10} (d),  we get that $h$ is  $F$-epic. On the other hand, by using that $\eta$ is $F$-exact and 
$I\in\mathcal{I}(F),$ we have that $p$ can be factored through $i$, and thus $\epsilon$  splits. Therefore, there exists $j': E\to I$ such that $j'j=1_I.$ Then, by Lemma \ref{Sa12, 1.10} (f1) $j': E\to I$ is $F$-epic. Hence by Lemma \ref{Sa12, 1.10} (f2), we get the 
$F$-epic $j'h:T'\to I.$ Thus  $I\in\mathcal{T}$ since $\mathcal{T}$ is $F$-torsion.
\

Let us show the bijective correspondence between basic  $F$-tilting $\Lambda$-modules and the classes $\mathcal{T}\subseteq\modu(\Lambda)$ which are $F$-torsion and $F$-preenveloping. By the equivalence between (a) and (b) proved above, 
 the assignment $T\mapsto\mathrm{gen}_F(T)$ is surjective. 
\

Consider basic $F$-tilting $\Lambda$-modules   $T$ and $S$  such that $\gen_F(T)=\gen_F(S).$ In particular, there is an $F$-epic 
$p: T^m\to S$ and an $\add(T)$-precover $f: T'\to S$. Hence,  there is some $h: T^m\to T'$ such that $p=fh.$ Since $p$ is  $F$-epic, it follows from Lemma \ref{Sa12, 1.10} (f1) that $f$ is  $F$-epic and thus $\eta: 0\to K\to T'\overset{f}{\to}S\to0$ is an $F$-exact sequence. By  applying  $\mathrm{Hom}_\Lambda(T,-)$ to $\eta,$ and since $\mathrm{Ext^1_F}(T,T')=0,$  we get the exact sequence  
$$\mathrm{Hom}_\Lambda(T,T')\xrightarrow{\mathrm{Hom}_\Lambda(T,f)}\mathrm{Hom}_\Lambda(T,S)\to\mathrm{Ext^1_F}(T,K)\to 0.$$
Therefore  $K\in T^{_F\perp_1}$ since $f$ is an $\add(T)$-precover. Moreover,  by Lemma \ref{l1, NHo83}, we have that $K\in T^{_F\perp_1}=\mathrm{gen}_F(T)=\mathrm{gen}_F(S)=S^{_F\perp_1}$ and thus $\eta$ splits. Therefore $S\in\mathrm{add}(T).$ Similarly, 
it can be shown that $T\in\mathrm{add}(S).$ Then $S\simeq T.$
\end{proof}

\begin{cor}\label{CP1}  Let $\Lambda$ be of finite representation type, $X$ be an additive generator in $\modu(\Lambda),$  $F:=F_{\mathrm{add}(X)}$ and $\mathcal{T}$ be an $F$-torsion class. Then, there is an $F$-tilting $T\in\modu(\Lambda)$ such that 
$\mathcal{T}=\gen_F(T)$ if, and only if, $\tau(X)\oplus D_{\Lambda^{op}}(\Lambda^{op})\in\mathcal{T}.$
\end{cor}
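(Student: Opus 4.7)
The plan is to reduce the statement to the main Theorem \ref{NHo83}, which asserts that $\mathcal{T}=\gen_F(T)$ for some $F$-tilting $T$ exactly when $\mathcal{T}$ is both $F$-torsion and $F$-preenveloping. Since $\mathcal{T}$ is assumed $F$-torsion, the content of the corollary is the equivalence between $\mathcal{T}$ being $F$-preenveloping and the concrete containment $\tau(X)\oplus D_{\Lambda^{op}}(\Lambda^{op})\in\mathcal{T}$. I will peel the statement apart in three short steps.

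First, I would identify $\mathcal{I}(F)$ in a convenient form. By Lemma \ref{FP-FI} (b), $\mathcal{I}(F)=\add(\tau(X))\oplus\inj(\Lambda)$, and since $D_{\Lambda^{op}}(\Lambda^{op})$ is an additive generator of $\inj(\Lambda)$, this gives $\mathcal{I}(F)=\add(\tau(X)\oplus D_{\Lambda^{op}}(\Lambda^{op}))$. From Lemma \ref{T=addT} I have $\mathcal{T}=\add(\mathcal{T})$, so the inclusion $\mathcal{I}(F)\subseteq\mathcal{T}$ is equivalent to the single membership $\tau(X)\oplus D_{\Lambda^{op}}(\Lambda^{op})\in\mathcal{T}$.

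Second, I would invoke finite representation type to take care of the preenveloping part for free. Since there are only finitely many indecomposables in $\modu(\Lambda)$ and $\mathcal{T}=\add(\mathcal{T})$, we may write $\mathcal{T}=\add(Y)$ for some $Y\in\modu(\Lambda)$. For any $M\in\modu(\Lambda)$ the $R$-module $\Hom_\Lambda(M,Y)$ is finitely generated, and taking a finite generating set $f_1,\dots,f_n$ yields a $\mathcal{T}$-preenvelope $(f_1,\dots,f_n)^{t}\colon M\to Y^n$. Hence $\mathcal{T}$ is automatically preenveloping.

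Finally, by Lemma \ref{FP-FI} (c), the class $\mathcal{T}=\smd(\mathcal{T})$ is $F$-preenveloping iff it is preenveloping and $\mathcal{I}(F)\subseteq\mathcal{T}$. By the two previous steps this collapses to the single condition $\tau(X)\oplus D_{\Lambda^{op}}(\Lambda^{op})\in\mathcal{T}$. Combining this with Theorem \ref{NHo83} applied to the $F$-torsion class $\mathcal{T}$ produces both directions of the corollary. I do not foresee any technical obstacle: the whole argument is a bookkeeping exercise chaining together Lemma \ref{T=addT}, Lemma \ref{FP-FI} (b,c), the finite representation type hypothesis, and Theorem \ref{NHo83}.
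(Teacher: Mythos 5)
Your proposal is correct and follows essentially the same route as the paper: Lemma \ref{FP-FI} (b) to identify $\mathcal{I}(F)$, Lemma \ref{T=addT} to get $\mathcal{T}=\add(\mathcal{T})$, finite representation type to write $\mathcal{T}=\add(Y)$ and conclude it is preenveloping, and then Lemma \ref{FP-FI} (c) together with Theorem \ref{NHo83}. The only difference is that you spell out the standard argument that $\add(Y)$ is preenveloping, which the paper leaves implicit.
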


\begin{proof} By Lemma \ref{FP-FI} (b), we have that $\mathcal{I}(F)=\add(\tau(X))\oplus\inj(\Lambda).$ On the other hand, since $\T$ is $F$-torsion, we get by Lemma \ref{T=addT} that $\mathcal{T}=\mathrm{add}(\mathcal{T}).$

$(\Rightarrow)$ It follows from Theorem \ref{NHo83} and Lemma \ref{FP-FI} (c).

$(\Leftarrow)$ By Theorem \ref{NHo83} and Lemma \ref{FP-FI} (c), it is enough to show that $\mathcal{T}$ is preenveloping. Now, since  
$\mathcal{T}=\mathrm{add}(\mathcal{T})$ and $\Lambda$ is of finite representation type, there exists $M\in\modu(\Lambda)$ such that 
$\mathcal{T}=\add(M).$ In particular, $\mathcal{T}$ is preenveloping.
\end{proof}

\section{$F$-silting modules}

Let $\Lambda$ be an Artin $R$-algebra and $X$ be an additive generator in $\modu(\Lambda)$. The main objective of this section is to introduce the $F$-presilting $\Lambda$-modules and to study their relationship with the $\tau$-rigid $\Gamma$-modules, where $\Gamma:=\mathrm{End}_\Lambda(X)^{op}$.

\begin{lem}\label{l,RAIR, 3.4} Let $X$ be an additive generator in $\modu(\Lambda),$ $\Gamma:=\mathrm{End}_\Lambda(X)^{op},$  $F:=F_{\mathrm{add}(X)}$ and $M\in\modu(\Lambda)$. If $\Hom_\Lambda(X,M)$ is a $\tau$-rigid $\Gamma$-module, then 
$\gen_F(M)$ is an $F$-torsion class.
\end{lem}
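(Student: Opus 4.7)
The plan is to reduce the problem to the sufficient criterion provided by Lemma \ref{NAS81, 5.10}, which says that if $M\in{}^{_F\perp_1}\gen_F(M)$ then $\gen_F(M)$ is closed under $F$-extensions and $F$-quotients. So the task reduces to proving the single vanishing $\Ext^1_F(M,N)=0$ for every $N\in\gen_F(M)$.

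First, I would translate this $F$-Ext vanishing into ordinary $\Ext^1_\Gamma$ over $\Gamma=\End_\Lambda(X)^{op}$ via Lemma \ref{l2, NAS81, 5.1} (b), which gives the natural isomorphism
\[
\Ext^1_F(M,N)\;\cong\;\Ext^1_\Gamma(\Hom_\Lambda(X,M),\Hom_\Lambda(X,N)).
\]
Then I would show that $\Hom_\Lambda(X,N)\in\gen(\Hom_\Lambda(X,M))$. Indeed, since $N\in\gen_F(M)$, there is an $F$-exact sequence $0\to K\to M^n\to N\to 0$; applying $\Hom_\Lambda(X,-)$ and using that $X\in\mathcal{P}(F)$ by Lemma \ref{FP-FI} (b) (so $\Hom_\Lambda(X,-)$ is exact on $F$-exact sequences), I obtain a short exact sequence $0\to\Hom_\Lambda(X,K)\to\Hom_\Lambda(X,M)^n\to\Hom_\Lambda(X,N)\to 0$ in $\modu(\Gamma)$, witnessing that $\Hom_\Lambda(X,N)$ is generated by $\Hom_\Lambda(X,M)$.

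Next, I would invoke the standard Auslander--Smal{\o} characterization of $\tau$-rigid modules: a module $Y\in\modu(\Gamma)$ is $\tau$-rigid if and only if $\Ext^1_\Gamma(Y,L)=0$ for every $L\in\gen(Y)$ (see \cite[Proposition 5.8]{AS81} or \cite[Proposition 2.4]{AIR14}). Applied to $Y=\Hom_\Lambda(X,M)$, which is $\tau$-rigid by hypothesis, and to $L=\Hom_\Lambda(X,N)\in\gen(\Hom_\Lambda(X,M))$ from the previous paragraph, this yields $\Ext^1_\Gamma(\Hom_\Lambda(X,M),\Hom_\Lambda(X,N))=0$. Combining with the isomorphism from Lemma \ref{l2, NAS81, 5.1} (b), I conclude $\Ext^1_F(M,N)=0$ for every $N\in\gen_F(M)$, i.e., $M\in{}^{_F\perp_1}\gen_F(M)$. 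An application of Lemma \ref{NAS81, 5.10} then finishes the proof.

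The only potentially delicate step is verifying that $\Hom_\Lambda(X,-)$ sends $F$-exact sequences to short exact sequences in $\modu(\Gamma)$; this is however immediate from $\mathcal{P}(F)=\add(X)$ (Lemma \ref{FP-FI} (b)) together with the characterization of $F$-exactness via $\Hom_\Lambda(P,-)$ for $P\in\mathcal{P}(F)$ (Lemma \ref{Sa12, 1.10} (e)). Everything else is a direct assembly of results already proved in the paper.
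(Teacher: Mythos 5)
Your proof is correct and follows essentially the same route as the paper: reduce to $M\in{}^{_F\perp_1}\gen_F(M)$ via Lemma \ref{NAS81, 5.10}, transport the Ext-vanishing to $\modu(\Gamma)$ via Lemma \ref{l2, NAS81, 5.1} (b), use the inclusion $\Hom_\Lambda(X,\gen_F(M))\subseteq\gen(\Hom_\Lambda(X,M))$, and conclude with the Auslander--Smal\o{} characterization of $\tau$-rigidity from \cite[Proposition 5.8]{AS81}. You even supply the verification of the inclusion that the paper leaves implicit, so nothing is missing.
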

\begin{proof} By Lemma \ref{NAS81, 5.10},  it is enough to show that $\Ext^1_F(M,\mathrm{gen}_F(M))=0.$ Since $\mathrm{Hom}_\Lambda(X,M)$ is a $\tau$-rigid $\Gamma$-module, we get from \cite[Proposition 5.8]{AS81} that 
$$\Ext^1_\Gamma(\Hom_\Lambda(X,M), \gen(\mathrm{Hom}_\Lambda(X,M))=0.$$ 
Then, by the inclusion $\mathrm{Hom}_\Lambda(X,\mathrm{gen}_F(M))\subseteq\mathrm{gen}(\mathrm{Hom}_\Lambda(X,M))$ and
Lemma \ref{l2, NAS81, 5.1} (b), we get that  $\Ext^1_F(M,\mathrm{gen}_F(M))=0.$
\end{proof}

\begin{lem}\label{RAIR, 3.4} Let $X$ be an additive generator in $\modu(\Lambda),$ $F:=F_{\mathrm{add}(X)}$ and let\\
$\mathrm{P^{-1}_F}(M)\overset{\pi^{-1}_F}{\to}\mathrm{P^0_F}(M)\overset{\pi^0_F}{\to}M\to0$ be the minimal $F$-projective presentation of $M\in\modu(\Lambda).$ Then, for any $N\in\mathrm{mod}(\Lambda),$ the following statements are equivalent.
\begin{itemize}
\item[(a)] $\mathrm{Hom_\Lambda}(\pi^{-1}_F,N): \mathrm{Hom_\Lambda}(\mathrm{P^0_F}(M),N)\to\mathrm{Hom_\Lambda}(\mathrm{P^{-1}_F}(M),N)$ is an epimorphism.
\item[(b)] $\mathrm{P^\bullet_{F\geq-1}}(N)[1]\in\mathrm{P^\bullet_{F\geq-1}}(M)^{\perp_0}$ in $\mathsf{K}^b(\mathrm{add}(X))$.\end{itemize}
\end{lem}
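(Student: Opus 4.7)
The plan is to unwind the definition of the Hom group in $\mathsf{K}^b(\add(X))$ explicitly and reduce the equivalence to standard lifting arguments using $F$-projectivity of the modules in $\add(X)$. Throughout, I abbreviate $Q^\bullet_M := P^\bullet_{F\geq -1}(M)$ and similarly for $N$, and I write $\pi_M := \pi^{-1}_{F,M}$, $\pi_N := \pi^{-1}_{F,N}$, $\varepsilon_N := \pi^0_{F,N}$.

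\textbf{Step 1: chain-maps and homotopies in the two-term case.} Since $Q^\bullet_M$ is concentrated in degrees $-1,0$ and $Q^\bullet_N[1]$ is concentrated in degrees $-2,-1$, a chain map $g^\bullet\colon Q^\bullet_M \to Q^\bullet_N[1]$ is determined by its single nonzero component $g^{-1}\colon P^{-1}_F(M) \to P^0_F(N)$, with the compatibility conditions with the differentials being automatic (the target in degree $0$ and the source in degree $-2$ both vanish). A homotopy in this setting reduces to two morphisms $h^0\colon P^0_F(M)\to P^0_F(N)$ and $h^{-1}\colon P^{-1}_F(M)\to P^{-1}_F(N)$, and $g^\bullet$ is null-homotopic iff
\[
g^{-1} \;=\; h^0\circ \pi_M \;-\; \pi_N\circ h^{-1}.
\]
Hence (b) is equivalent to the statement that every morphism $g^{-1}\colon P^{-1}_F(M)\to P^0_F(N)$ admits such a factorisation.

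\textbf{Step 2: (a)$\Rightarrow$(b).} Given $g^{-1}\colon P^{-1}_F(M)\to P^0_F(N)$, I would first compose with the $F$-epic $\varepsilon_N\colon P^0_F(N)\to N$ and apply the hypothesis (a) to obtain some $f\colon P^0_F(M)\to N$ with $\varepsilon_N\,g^{-1} = f\,\pi_M$. Since $P^0_F(M)\in\add(X)=\mathcal{P}(F)$ and $\varepsilon_N$ is $F$-epic, $f$ lifts to $h^0\colon P^0_F(M)\to P^0_F(N)$ with $\varepsilon_N\,h^0 = f$. By construction, $\varepsilon_N\,(h^0\pi_M - g^{-1})=0$, so $h^0\pi_M - g^{-1}$ factors through $\Ker\varepsilon_N = \Ima\pi_N$. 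Here I use the key fact that a minimal $F$-projective resolution is a long $F$-exact sequence, hence the induced map $P^{-1}_F(N)\twoheadrightarrow \Ima\pi_N$ is itself $F$-epic (it is the deflation in the $F$-exact sequence $0\to\Ker\pi_N\to P^{-1}_F(N)\to\Ima\pi_N\to 0$). Combined with the $F$-projectivity of $P^{-1}_F(M)$, this lets me lift $h^0\pi_M - g^{-1}$ to some $h^{-1}\colon P^{-1}_F(M)\to P^{-1}_F(N)$ satisfying $\pi_N\,h^{-1} = h^0\pi_M - g^{-1}$, which is precisely the required homotopy relation.

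\textbf{Step 3: (b)$\Rightarrow$(a).} Given $g\colon P^{-1}_F(M)\to N$, use the $F$-projectivity of $P^{-1}_F(M)$ together with the $F$-epic $\varepsilon_N\colon P^0_F(N)\to N$ to lift $g$ to $\tilde g\colon P^{-1}_F(M)\to P^0_F(N)$ with $\varepsilon_N\,\tilde g = g$. By Step 1, $\tilde g$ determines a chain map $Q^\bullet_M\to Q^\bullet_N[1]$, so by (b) there exist $h^0,h^{-1}$ with $\tilde g = h^0\pi_M - \pi_N h^{-1}$. Applying $\varepsilon_N$ on the left annihilates the second summand (because $\varepsilon_N\pi_N=0$) and yields $g = (\varepsilon_N h^0)\,\pi_M$, giving the desired factorisation of $g$ through $\pi_M$.

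\textbf{Main obstacle.} The routine sign/shift bookkeeping is unenlightening but must be written carefully; the only genuine content is the observation in Step 2 that the truncated map $P^{-1}_F(N)\to\Ima\pi_N$ is $F$-epic. This follows because $F$ has enough projectives (Lemma \ref{FP-FI}(a)) so a minimal $F$-projective resolution is by definition $F$-exact in each place, but it is the step one most easily overlooks.
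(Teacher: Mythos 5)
Your argument is correct and follows essentially the same route as the paper's proof: identifying a chain map $P^\bullet_{F\geq -1}(M)\to P^\bullet_{F\geq -1}(N)[1]$ with a single morphism $P^{-1}_F(M)\to P^0_F(N)$, constructing the two homotopy components via $F$-projectivity of $P^0_F(M)$ and $P^{-1}_F(M)$, and in the converse direction composing the homotopy identity with $\pi^0_{F,N}$. Your highlighted "main obstacle" — that $P^{-1}_F(N)\twoheadrightarrow\mathrm{Im}(\pi^{-1}_{F,N})$ is $F$-epic — is exactly the factorization through the image that the paper also uses at the corresponding step.
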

\begin{proof} Let $\mathrm{P^{-1}_F}(N)\overset{\pi'^{-1}_F}{\to}\mathrm{P^0_F}(N)\overset{\pi'^0_F}{\to}N\to0$ be the minimal $F$-projective presentation of $N.$
\

(a) $\Rightarrow$ (b) Let $f^\bullet: \mathrm{P^\bullet_{F\geq-1}}(M)\to\mathrm{P^\bullet_{F\geq-1}}(N)[1]$ be a morphism of chain complexes. Then, $f^\bullet$ comes from a morphism $f: \mathrm{P^{-1}_F}(M)\to\mathrm{P^0_F}(N)$  in $\modu(\Lambda)$ as in the following diagram
$$\xymatrix{\cdots\ar[r]&0\ar[d]\ar[r]&\mathrm{P^{-1}_F}(M)\ar[d]^{f}\ar[r]^{\pi^{-1}_F}&\mathrm{P^0_F}(M)\ar[r]\ar[d]&0\ar[d]\ar[r]&\cdots\\
\cdots\ar[r]&\mathrm{P^{-1}_F}(N)\ar[r]_{\pi'^{-1}_F}&\mathrm{P^0_F}(N)\ar[r]&0\ar[r]&0\ar[r]&\cdots.}$$
By hypothesis, there is $g: \mathrm{P^0_F}(M)\to N$ such that $g\pi^{-1}_F=\pi'^0_Ff.$ Now since $\mathrm{P^0_F}(M)$ is $F$-projective, we have the following commutative diagram
$$\xymatrix{&\mathrm{P^0_F}(M)\ar[d]^{g}\ar@{-->}[dl]_{\exists\; h_0}\\
\mathrm{P^0_F}(N)\ar[r]_{\pi'^0_F}&N.}$$
Let $\mathrm{P^{-1}_F}(N)\overset{p}{\to}\mathrm{Im}(\pi'^{-1}_F)\overset{i}{\to}\mathrm{P^0_F}(N)$ be the factorization of $\pi'^{-1}_F$ through its image. Since $\pi'^0_F(f-h_0\pi^{-1}_F)=\pi'^0_Ff-g\pi^{-1}_F=0,$ there exists $k: \mathrm{P^{-1}_F}(M)\to\mathrm{Im}(\pi'^{-1}_F)$ such that $ik=f-h_0\pi_1.$ Using that $\mathrm{P^{-1}_F}(M)$ is $F$-projective and $p$ is $F$-epic, we have the following commutative diagram
$$\xymatrix{&\mathrm{P^{-1}_F}(M)\ar[d]^{k}\ar@{-->}[dl]_{\exists\; h_1}\\
\mathrm{P^{-1}_F}(N)\ar[r]_{p}&\mathrm{Im}(\pi'^{-1}_F).}$$
Then $\pi'^{-1}_Fh_1=ik=f-h_0\pi^{-1}_F$ and thus $f^\bullet\sim0;$ proving  (b).

(b) $\Rightarrow$ (a) Let $f: \mathrm{P^{-1}_F}(M)\to N.$ Since $\mathrm{P^{-1}_F}(M)$ is $F$-projective and $\pi'^0_F$ is $F$-epic, we get the following commutative diagram
$$\xymatrix{&\mathrm{P^{-1}_F}(M)\ar@{-->}[dl]_{\exists\;g}\ar[d]^{f}\\
\mathrm{P^0_F}(N)\ar[r]_{\pi'^0_F}&N.}$$
By hypothesis, there are $h_0: \mathrm{P^0_F}(M)\to\mathrm{P^0_F}(N)$ and $h_1: \mathrm{P^{-1}_F}(M)\to\mathrm{P^{-1}_F}(N)$ such that $\pi'^{-1}_Fh_1+h_0\pi^{-1}_F=g.$ Then  $f=\pi'^0_Fg=\pi'^0_F(\pi'^{-1}_Fh_1+h_0\pi^{-1}_F)=\pi'^0_Fh_0\pi^{-1}_F.$
\end{proof}

Now, we give the proof of {\bf Proposition \ref{RAIR, 3.5}}:
\begin{proof}
Let  $\cdots\to\mathrm{P^{-2}_F}(M)\xrightarrow{\pi^{-2}_F}\mathrm{P^{-1}_F}(M) \xrightarrow{\pi^{-1}_F}\mathrm{P^0_F}(M)\xrightarrow{\pi^0_F}M\to0$ be the minimal $F$-projective resolution  of $M\in\modu(\Lambda).$  From this resolution,  we get the chain complex $(\mathrm{P^{\bullet}_F}(M),M)$ which is as follows
$$(*)\quad 0\to (\mathrm{P^0_F}(M),M)\xrightarrow{(\pi^{-1}_F,M)}(\mathrm{P^{-1}_F}(M),M)\xrightarrow{(\pi^{-2}_F,M)}(\mathrm{P^{-2}_F}(M),M)\to\cdots.$$
(a) $\Rightarrow$ (b) By Lemma \ref{RAIR, 3.4}, we know that $(\pi^{-1}_F,M)$ is an epimorphism and thus 
$\Ext^1_F(M,M)=H^1((\mathrm{P^{\bullet}_F}(M),M))=0.$ Moreover, for any $\theta\in (\mathrm{P^{-1}_F}(M),M)$ there is some $\theta'\in (\mathrm{P^0_F}(M),M)$ such that $\theta=\theta'\pi^{-1}_F.$ Therefore $(\pi^{-2}_F,M)(\theta)=\theta\pi^{-2}_F=\theta'\pi^{-1}_F\pi^{-2}_F=0.$
\

(b) $\Rightarrow$ (a)  From the complex $(*),$ we get that $\Ima\, (\pi^{-1}_F,M)=\Ker\,(\pi^{-2}_F,M)=(\mathrm{P^{-1}_F}(M),M)$ since $\Hom_\Lambda(\pi^{-2}_F, M)=0$ and $\Ext^1_F(M,M)=0.$ Thus, by Lemma \ref{RAIR, 3.4}, we get (a).
\

(b) $\Rightarrow$ (c) In particular, by the proved above, we get that $M$ is $F$-presilting. Let us show that $M\in{}^{_F\perp_1}\gen_F(M).$ Consider $N\in\mathrm{gen}_F(M).$ Hence, there is an $F$-epic $p: M^n\to N.$  
In order to show that $\mathrm{Ext^1_F}(M,N)=0,$ by using the $F$-projective resolution of $M,$  it is enough to prove that 
$\mathrm{Hom}_\Lambda(\pi^{-1}_F,N): \mathrm{Hom}_\Lambda(\mathrm{P^0_F}(M),N)\to\mathrm{Hom}_\Lambda(\mathrm{P^{-1}_F}(M),N)$ is an epimorphism. Let $f: \mathrm{P^{-1}_F}(M)\to N$. Now, since $p: M^n\to N$ is  $F$-epic, there exists $g: \mathrm{P^{-1}_F}(M)\to M^n$ such that $pg=f.$ Then, by Lemma \ref{RAIR, 3.4} there exists $h: \mathrm{P^0_F}(M)\to M^n$ such that $g=h\pi^{-1}_F.$ Hence $\mathrm{Hom}_\Lambda(\pi^{-1}_F,N)$ is an epimorphism since $(ph)\pi^{-1}_F=pg=f.$
\

(a) $\Leftrightarrow$ (d) Let  $\mathrm{P^{-1}_F}(M)\to\mathrm{P^0_F}(M)\to M\to0$ be the minimal $F$-projective presentation of $M.$ Then, by Lemma \ref{FP-FI} (b), Proposition \ref{ASS06, VI.3.1} (b) and Proposition \ref{p1, NAS81, 5.1} (a), it follows that  
{\small $\mathrm{Hom}_\Lambda(X,\mathrm{P^{-1}_F}(M))\to\mathrm{Hom}_\Lambda(X,\mathrm{P^0_F}(M))\to\mathrm{Hom}_\Lambda(X,M)\to0$}
 is a minimal projective presentation in $\mathrm{mod}(\Gamma).$ Moreover, from Proposition \ref{ASS06, VI.3.1}, it can be shown  that 
 $\mathrm{P^\bullet_{F\geq-1}}(M)$ is presilting in $\mathsf{K}^b(\mathrm{add}(X))$ if, and only if, $\mathrm{P^\bullet_{\geq-1}}(\mathrm{Hom}_\Lambda(X,M))$ is presilting in $\mathsf{K}^b(\mathrm{proj}(\Gamma)).$ Finaly,  by \cite[Lemma 3.4]{AIR14}, the latter is equivalent to $\mathrm{Hom}_\Lambda(X,M)$ being $\tau$-rigid.
\end{proof}

In what follows, we give an example of an Artin algebra $\Lambda$ and an additive generator $X\in\modu(\Lambda)$ such that $\Lambda$ is not $F$-admissible, for 
$F:=F_{\mathrm{add}(X)}.$

\begin{ex}\label{Ej-noFadm} Let $k$ be a field, $\xymatrix{Q:=\circ^1\ar[r]^\alpha&\circ^2\ar@(ul,ur)^{\beta}}$ and $\Lambda:=kQ/\langle\beta^2\rangle.$ The Auslander-Reiten quiver of $\mathrm{mod}(\Lambda)$ is the following one
$$\xymatrix{& K\ar[rd]^{f_1}&&\mathrm{S(2)}\ar[rd]^{f_2}\\
\mathrm{S(2)}\ar[ru]^{f_2}\ar[rd]^{f_3}&& L\ar[ru]^{f_4}\ar[rd]^{f_5}&&K\\
&\mathrm{P(2)}\ar[ru]^{f_6}\ar[rd]^{f_7}&&\mathrm{I(2)}\ar[ru]^{f_8}\ar[rd]^{f_9}\\
&&\mathrm{P(1)}\ar[ru]^{f_{10}}&&\mathrm{S(1),}}$$
where $K:=P(1)/S(2)$ and $L$ is the unique indecomposable with dimension vector $(1,2)$ and $L/\mathrm{rad}(L)=\mathrm{S(1)\oplus S(2)}.$ 

Consider $X:=\Lambda\oplus K,$  $F:=F_{\mathrm{add}(X)}$ and  $M:=I(2).$ Note that $\Ext^1_F(M,M)=0$ since $M$ is injective. 
We assert that $M$ is not $F$-presilting.  Indeed, the 
$(\mathcal{P}(F),F)$-cover of $\mathrm{I(2)}$ is given by the  $F$-exact sequence 
$$\eta: 0\to\mathrm{S(2)}\xrightarrow{\begin{pmatrix} f_7f_3\\ -f_2\end{pmatrix}}\mathrm{P(1)}\oplus K\xrightarrow{\begin{pmatrix}f_{10}   & f_5f_1\end{pmatrix}}\mathrm{I(2)}\to 0,$$
and the $(\mathcal{P}(F),F)$-cover of $\mathrm{S(2)}$ is given by the $F$-exact sequence 
$$0\to\mathrm{S(2)}\xrightarrow{f_3}\mathrm{P(2)}\xrightarrow{f_4f_6}\mathrm{S(2)}\to0.$$ Then,  the minimal $F$-projective presentation of $\mathrm{I(2)}$ is 
$$\mathrm{P(2)}\xrightarrow{\pi^{-1}_F}\mathrm{P(1)}\oplus K\xrightarrow{\pi^0_F}\mathrm{I(2)}\to0,$$
where $\pi^0_F=\begin{pmatrix}f_{10} & f_5f_1\end{pmatrix}$ and $\pi^{-1}_F=\begin{pmatrix} f_7f_3f_4f_6   \\  -f_2f_4f_6\end{pmatrix}.$

Suppose that $M=\mathrm{I(2)}$ is $F$-presilting.  Consider the irreducible morphism 
$f_7: \mathrm{P(2)}\to\mathrm{P(1)}$ and the map 
$f^\bullet: \mathrm{P^\bullet_{F\geq-1}}(\mathrm{I(2)})\to\mathrm{P^\bullet_{F\geq-1}}(\mathrm{I(2)})[1]$ given by the following  morphism of chain complexes
$$\xymatrix{0\ar[r]\ar[d]_0&\mathrm{P(2)}\ar[r]^{\pi^{-1}_F}\ar[d]^{\begin{pmatrix}f_7\\0\end{pmatrix}}&\mathrm{P(1)}\oplus K\ar[d]_0\\
\mathrm{P(2)}\ar[r]_{\pi^{-1}_F}&\mathrm{P(1)}\oplus K\ar[r]&0.}$$
Since  $\Hom_{K^b(\add(X))}(\mathrm{P^{\bullet}_{F\geq -1}}(\mathrm{I(2)}), \mathrm{P^{\bullet}_{F\geq -1}}(\mathrm{I(2)})[1])=0,$   
there are morphisms $s': \mathrm{P(1)}\oplus K\to\mathrm{P(1)}\oplus K$ and $s: \mathrm{P(2)}\to\mathrm{P(2)}$ such that $\begin{pmatrix}f_7\\0\end{pmatrix}= s'\pi^{-1}_F+\pi^{-1}_Fs.$ Moreover, by using that\\  $\pi^{-1}_F=\begin{pmatrix}f_7f_3f_4f_6\\-f_2f_4f_6\end{pmatrix}\in\mathrm{rad}^2_\Lambda(\mathrm{P(2)},\mathrm{P(1)}\oplus K)$ and $\mathrm{rad}^2_\Lambda$ is an ideal, we obtain that 
$\begin{pmatrix}f_7\\0\end{pmatrix}=s'\pi^{-1}_F+\pi^{-1}_Fs\in\mathrm{rad}^2_\Lambda(\mathrm{P(2)},\mathrm{P(1)}\oplus K)$ and thus  
$f_7\in\mathrm{rad}^2_\Lambda(\mathrm{P(2)},\mathrm{P(1)}),$ which is a contradiction with the fact that $f_7$ is irreducible. 
Therefore $\mathrm{I(2)}$ is not $F$-presilting. Finally, it can be shown  that 
$\gen_F(\mathrm{I(2)}))=\add (\mathrm{I(2)}\oplus\mathrm{S(1)}).$ 
\end{ex}

\begin{lem}\label{l1, RAIR, 3.6}  $X[0]$ is a silting complex in $\mathsf{K}^b(\mathrm{add}(X)),$ for any $X\in\modu(\Lambda).$
\end{lem}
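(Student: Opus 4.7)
The plan is to verify the two defining conditions for silting: that $X[0]$ is presilting (no positive self-extensions in $\mathsf{K}^b(\mathrm{add}(X))$), and that the smallest triangulated subcategory of $\mathsf{K}^b(\mathrm{add}(X))$ containing $\mathrm{add}(X[0])$ is all of $\mathsf{K}^b(\mathrm{add}(X))$.

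For the presilting condition, I would just observe that for any $i>0$ the complex $X[0]$ lives in cohomological degree $0$ while $X[0][i]$ lives in cohomological degree $-i \neq 0$. Hence any cochain map $f^\bullet: X[0]\to X[0][i]$ satisfies $f^k=0$ for every $k$ (the source vanishes in degree $-i$ and the target vanishes in degree $0$), so already at the level of cochain complexes $\Hom_{\mathsf{C}(\mathrm{add}(X))}(X[0],X[0][i])=0$, and a fortiori $\Hom_{\mathsf{K}^b(\mathrm{add}(X))}(X[0],X[0][i])=0$. This gives $X[0]\in (X[0])^{\perp_{>0}}$.

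For the silting condition, let $\T$ denote the smallest triangulated subcategory of $\mathsf{K}^b(\mathrm{add}(X))$ containing $\mathrm{add}(X[0])$. Since $\T$ is closed under the shift functor and its inverse, every stalk complex $Y[j]$ with $Y\in\mathrm{add}(X)$ and $j\in\mathbb{Z}$ lies in $\T$. Given an arbitrary object $Y^\bullet\in\mathsf{K}^b(\mathrm{add}(X))$, let $a\le b$ be integers with $Y^k=0$ for $k\notin [a,b]$, and argue by induction on the width $w(Y^\bullet):=b-a+1$. For $w(Y^\bullet)=1$ the complex is a shifted stalk, so $Y^\bullet\in\T$ by the previous observation. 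For $w(Y^\bullet)>1$ I would use the stupid truncations $\sigma^{\geq b}Y^\bullet$ and $\sigma^{\leq b-1}Y^\bullet$: the short exact sequence
$$0\to \sigma^{\geq b}Y^\bullet\to Y^\bullet\to \sigma^{\leq b-1}Y^\bullet\to 0$$
is degree-wise split, hence yields a distinguished triangle
$$\sigma^{\geq b}Y^\bullet\to Y^\bullet\to \sigma^{\leq b-1}Y^\bullet\to \sigma^{\geq b}Y^\bullet[1]$$
in $\mathsf{K}^b(\mathrm{add}(X))$. Both outer terms have width strictly less than $w(Y^\bullet)$, so by the inductive hypothesis they lie in $\T$, and then so does $Y^\bullet$ by closure under cones.

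There is no real obstacle here; the only points requiring care are the standard facts that stupid truncations of complexes of additive objects produce degree-wise split short exact sequences (and thus triangles in the homotopy category) and that a triangulated subcategory is automatically closed under shifts, both of which are purely formal. The statement is essentially a tautology once one unpacks the definition of silting for a one-term complex.
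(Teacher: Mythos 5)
Your proposal is correct and follows essentially the same route as the paper: the presilting part is immediate from the degree separation (the paper simply calls it clear), and the generation part is the same induction on the number of nonzero terms, realizing the complex as the cone of a map between a shifted stalk complex and a shorter truncation. The only cosmetic difference is that you peel off the top-degree term via the stupid truncation triangle, whereas the paper peels off the bottom-degree term.
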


\begin{proof} Let $X\in\modu(\Lambda).$ It is clear that $X[0]$ is presilting in $\mathsf{K}^b(\mathrm{add}(X)).$ Then, it is enough to show 
 that $Z^\bullet\in\mathrm{thick}(X[0])$ $\forall$ $Z^\bullet\in\mathsf{K}^b(\mathrm{add}(X)).$ We do the proof of that by induction on the 
 length $\ell(Z^\bullet)$ of the complex $Z^\bullet\in\mathrm{thick}(X[0])$. It can be assumed that $Z^\bullet\neq0$ and it is concentrated in degree $0.$ If $\ell(Z^\bullet)=0$, then there is nothing to prove. 
\

Let $n:=\ell(Z^\bullet)\geq1.$ Then $Z^\bullet$ has the following shape 
$$\cdots\to 0\to Z^{-(n+1)}\xrightarrow{d^{-(n+1)}}Z^{-n}\to\cdots\to Z^0\to0\to\cdots.$$
Consider the map $f^\bullet: Z^{-(n+1)}[n]\to Z'^\bullet$ in $\mathsf{K}^b(\mathrm{add}(X))$ which is induced by $d^{-n-1}$ as in the following diagram
$$\xymatrix{\cdots\ar[r]&0\ar[d]\ar[r]& Z^{-(n+1)}\ar[r]\ar[d]_{d^{-(n+1)}}&0\ar[r]\ar[d]&\cdots\ar[r]& 0\ar[d]\ar[r]&0\cdots\\
\cdots\ar[r]&0\ar[r]& Z^{-n}\ar[r]& Z^{-(n-1)}\ar[r]&\cdots\ar[r]& Z^0\ar[r]&0\cdots.}$$
Since $Z^{-(n+1)}[n]\in\mathrm{thick}(X[0]),$ by the inductive hypothesis we get  $Z'^\bullet\in\mathrm{thick}(X[0]).$ Finally,  we 
conclude that $Z^\bullet\in\mathrm{thick}(X[0])$  since $Z^\bullet=C(f^\bullet).$ 
\end{proof}

 In what follows, we use the following notation. For the case of a cochain complex of $\Lambda$ modules which is concentrate in degrees $0$ and $-1,$ take for example $M^\bullet:=\cdots\to0\to M^{-1}\to M^0\to0\to\cdots,$  for simplicity we write $(M^{-1}\to M^0)$ to denote 
the entire complex $M^\bullet.$

\begin{lem} Let $X$ be an additive generator in $\modu(\Lambda),$  $F:=F_{\mathrm{add}(X)}$, $M\in\mathrm{ind}(\Lambda)$ and $\mathrm{P^{-1}_F}(M)\xrightarrow{\pi^{-1}_F}\mathrm{P^0_F}(M)\xrightarrow{\pi^0_F}M\to0$ be the minimal $F$-projective presentation of $M.$ Then $(\mathrm{P^{-1}_F}(M)\xrightarrow{\pi^{-1}_F}\mathrm{P^0_F}(M))\in\mathrm{ind}(\mathsf{K}^b(\mathrm{add}(X))$. In particular, if $N\in\mathrm{mod}(\Lambda)$ then $\mathrm{rk}(\mathrm{P^{-1}_F}(N)\xrightarrow{\pi^{-1}_F}\mathrm{P^0_F}(N))=\mathrm{rk}(N).$
\end{lem}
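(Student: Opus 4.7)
The plan is to reduce the problem to an analogous statement in $\mathsf{K}^b(\proj(\Gamma))$ via the functor $e_X:=\Hom_\Lambda(X,-)$, and then to exploit the minimality of the $F$-projective presentation. By Proposition \ref{ASS06, VI.3.1} (b), the restriction $e_X|_{\add(X)}$ is an equivalence of categories onto $\proj(\Gamma)$, and hence extends termwise to a triangle equivalence $\bar{e}_X\colon \mathsf{K}^b(\add(X))\to\mathsf{K}^b(\proj(\Gamma))$. Under $\bar{e}_X$, the two-term complex $P^\bullet:=(\mathrm{P^{-1}_F}(M)\xrightarrow{\pi^{-1}_F}\mathrm{P^0_F}(M))$ is sent to the two-term truncation $Q^\bullet:=(Q^{-1}\xrightarrow{d}Q^0)$ of a projective presentation of $N:=e_X(M)=\Hom_\Lambda(X,M)$ in $\modu(\Gamma)$: exactness of $e_X$ on $F$-exact sequences (as used in Lemma \ref{l2, NAS81, 5.1}) guarantees that we really obtain a projective presentation, and minimality survives, since by Proposition \ref{p1, NAS81, 5.1} (b) both $e_X(\pi^0_F)$ and $e_X(\pi^{-1}_F)$ are epic onto $N$ and $\Ker(e_X(\pi^0_F))$ respectively, while by Proposition \ref{p1, NAS81, 5.1} (a) they are left minimal, that is, projective covers.

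Next, I would observe that $N$ is indecomposable in $\modu(\Gamma)$: by Lemma \ref{l2, NAS81, 5.1} (a) the functor $e_X$ is fully faithful, so $\End_\Gamma(N)\cong\End_\Lambda(M)$, which is local since $M\in\ind(\Lambda)$. Taking $H^0$ yields a ring homomorphism $\phi\colon\End_{\mathsf{K}^b(\proj(\Gamma))}(Q^\bullet)\to\End_\Gamma(N)$. Since $\End_\Gamma(N)$ is local, for any idempotent $e^\bullet=(e^{-1},e^0)$ of $Q^\bullet$ we have $\phi(e^\bullet)\in\{0,1\}$, and it suffices to prove that $\phi(e^\bullet)=0$ forces $e^\bullet=0$ in $\mathsf{K}^b(\proj(\Gamma))$ (the case $\phi(e^\bullet)=1$ reduces to it by passing to $\mathrm{id}-e^\bullet$).

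The heart of the argument is this case. From $H^0(e^\bullet)=0$ one obtains a factorization $e^0=dh$ for some $h\colon Q^0\to Q^{-1}$; using $h$ as a partial homotopy, $e^\bullet$ becomes homotopic to $(g,0)$ with $g:=e^{-1}-hd$ satisfying $dg=0$. Minimality of the presentation forces $\Ker(d)\subseteq\rad(Q^{-1})$, so $g$ factors through $\rad(Q^{-1})$ and therefore lies in the Jacobson radical of $\End_\Gamma(Q^{-1})$, which is nilpotent because $\Gamma$ is an Artin algebra. Hence $g^n=0$ for some $n\geq 1$, and consequently $(e^\bullet)^n=0$ in $\mathsf{K}^b(\proj(\Gamma))$; since $e^\bullet$ is idempotent, $e^\bullet=0$. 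Therefore $\End_{\mathsf{K}^b(\proj(\Gamma))}(Q^\bullet)$ admits only trivial idempotents, so $Q^\bullet$—equivalently $P^\bullet$—is indecomposable in $\mathsf{K}^b(\add(X))$.

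For the rank statement, given an arbitrary $N\in\modu(\Lambda)$ with Krull-Schmidt decomposition $N=\oplus_{i=1}^r\,N_i^{a_i}$ into pairwise non-isomorphic indecomposables, the minimal $F$-projective presentation of $N$ is the direct sum of those of each $N_i$ with multiplicity $a_i$ (by uniqueness of minimal projective covers under direct sums). By the indecomposability established above each $(\mathrm{P^{-1}_F}(N_i)\to\mathrm{P^0_F}(N_i))$ is indecomposable, and complexes associated with distinct $N_i$ are pairwise non-isomorphic because $H^0$ recovers the $N_i$. Hence the number of distinct indecomposable summands equals $r=\mathrm{rk}(N)$. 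The main obstacle is the idempotent-nilpotence step in the third paragraph, where the Artin hypothesis on $\Gamma$ (equivalently, on $\Lambda$), through the nilpotence of the Jacobson radical of $\End_\Gamma(Q^{-1})$, is essential to convert the merely cohomological condition $H^0(e^\bullet)=0$ into an actual vanishing of $e^\bullet$ in the homotopy category.
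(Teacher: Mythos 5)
Your proof is correct, but it takes a genuinely different route from the paper's. The paper stays in $\modu(\Lambda)$: it writes $(\mathrm{P^{-1}_F}(M)\to\mathrm{P^0_F}(M))=(P^{-1}\xrightarrow{d^{-1}}P^0)\oplus(P'^{-1}\xrightarrow{d'^{-1}}P'^0)$, notes that $M=\mathrm{Coker}(d^{-1})\oplus\mathrm{Coker}(d'^{-1})$ forces one cokernel to vanish, verifies that the surviving summand is again an $F$-projective presentation of $M$, and then uses right minimality of $\pi^0_F$ and $\pi^{-1}_F$ to see that the comparison maps into the minimal presentation are isomorphisms, so the other summand is zero. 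You instead transport the complex to $\mathsf{K}^b(\proj(\Gamma))$ via the equivalence $\Hom_\Lambda(X,-)|_{\add(X)}$, where it becomes the minimal projective presentation of the indecomposable $\Gamma$-module $\Hom_\Lambda(X,M)$ (exactly as the paper itself records in the proof of Proposition \ref{RAIR, 3.5}), and you show the homotopy-endomorphism ring has only trivial idempotents: $H^0$ lands in the local ring $\End_\Gamma(N)$, and when $H^0(e^\bullet)=0$ the homotopy adjustment produces $g$ with image in $\Ker(d)\subseteq\rad(Q^{-1})$, hence nilpotent, killing the idempotent. Both arguments are sound. Yours buys explicit control of idempotents in the homotopy category --- the paper's proof tacitly realizes the decomposition at the level of complexes, which is precisely the point your radical-nilpotence step handles --- and it makes the analogy with the corresponding fact in $\tau$-tilting theory transparent; the paper's version is shorter and stays entirely in the relative setting. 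Your handling of the ``in particular'' clause (additivity of minimal presentations, indecomposability of each summand, $H^0$ separating the $N_i$, and Krull--Schmidt in $\mathsf{K}^b(\add(X))$) is also fine, and more detailed than the paper, which leaves that part implicit.
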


\begin{proof} Let $(\mathrm{P^{-1}_F}(M)\xrightarrow{\pi^{-1}_F}\mathrm{P^0_F}(M))=(P^{-1}\xrightarrow{d^{-1}}P^0)\oplus(P'^{-1}\xrightarrow{d'^{-1}}P'^0).$ Consider $d^{-1}=i\bar{d}^{-1}$ and $d'^{-1}=j\bar{d'}^{-1}$ which are, respectively, the factorizations through its image of $d^{-1}$ and $d'^{-1}.$ Note that $M=\mathrm{Coker}(d^{-1})\oplus\mathrm{Coker}(d'^{-1})$ and since $M\in\mathrm{ind}(\Lambda)$, without loss of generality it can be assumed that $\mathrm{Coker}(d'^{-1})=0.$
\

 Let us show that $P^{-1}\xrightarrow{d^{-1}}P^0\xrightarrow{p}M\to0$ is an $F$-projective presentation of $M.$ By Lemma \ref{FP-FI} (a,b) and Lemma \ref{Sa12, 1.10} (e),  it is enough to prove that $\mathrm{Hom}(P,\bar{d}^{-1})$ and $\mathrm{Hom}(P,p)$ are epimorphisms 
 $\forall$ $P\in\add(X).$ Let $P\in\add(X)$, $f: P\to M$ and $g: P\to\mathrm{Im}(d^{-1})$ in $\modu(\Lambda).$ Since 
 {\small $$\pi^{-1}_F=\begin{pmatrix}d^{-1}&0\\0&d'^{-1}\end{pmatrix}:P^{-1}\oplus P'^{-1}\to P^0\oplus P'^0\,\text{ and }\;
 \pi^0_F=\begin{pmatrix}p&0\end{pmatrix} :P^0\oplus P'^0\to M,$$}  there are $f': P\to P^0$, $f'': P\to P'^0$, $g': P\to P^{-1}$ and $g'': P\to P'^{-1}$ such that $\begin{pmatrix}p&0\end{pmatrix}\begin{pmatrix}f'\\f''\end{pmatrix}=f$ and $\begin{pmatrix}\bar{d}^{-1}&0\\0&\bar{d'}^{-1}\end{pmatrix}\begin{pmatrix}g'\\g''\end{pmatrix}=\begin{pmatrix}g\\0\end{pmatrix}$. Hence $P^{-1}\overset{d^{-1}}{\to}P^0\overset{p}{\to}M\to0$ is an $F$-projective presentation of $M$ since $f=pf'$ and $g=\bar{d}^{-1}g'.$ Furthermore, we have the following $F$-exact  and commutative diagram 
$$\xymatrix{\mathrm{P^{-1}_F}(M)\ar_{l}[d]\ar^{\pi^{-1}_F}[rr]&&\mathrm{P^0_F}(M)\ar^{k}[d]\ar^{\pi^0_F}[rr]&&M\ar@{=}[d]\ar[rr]&&0\\
P^{-1}\ar_{l'}[d]\ar^{d^{-1}}[rr]&&P^0\ar^{k'}[d]\ar^{p}[rr]&&M\ar@{=}[d]\ar[rr]&&0\\
\mathrm{P^{-1}_F}(M)\ar_{\pi^{-1}_F}[rr]&&\mathrm{P^0_F}(M)\ar_{\pi^0_F}[rr]&&M\ar[rr]&&0.}$$
From the right minimality of $\pi^0_F$ and $\pi^{-1}_F,$ it follows that $k$ and $l$ are split-monomorphisms. Moreover, since $P^{-1}\;|\;\mathrm{P^{-1}_F}(M)$ and $P^0\;|\;\mathrm{P^0_F}(M),$ we conclude that $k$ and $l$ are isomorphisms. Hence $P^{-1}\oplus P'^{-1}=\mathrm{P^{-1}_F}(M)=P^{-1}$ and $P^0\oplus P'^0=\mathrm{P^0_F}(M)=P^0,$ and thus $(P'^{-1}\overset{d'^{-1}}{\to}P'^0)=(0\to0)$.\end{proof}

\begin{defi} Let $X$  be an additive generator in $\modu(\Lambda)$ and $F:=F_{\mathrm{add}(X)}.$ For a pair  $(M,X'),$  with $M\in\modu(\Lambda)$ and $X'\in\add(X),$ we say that:
\begin{itemize}
\item[$\mathrm{(a)}$]  $(M,X')$ is $\mathbf{F}$-$\mathbf{presilting}$ if $M$ is $F$-presilting and $\mathrm{Hom}_\Lambda(X',M)=0;$
\item[$\mathrm{(b)}$]  $(M,X')$ is $\mathbf{support}$ $F$-$\mathbf{silting}$ if $(M,X')$ is $F$-presilting and 
$$\mathrm{rk}(X)=\mathrm{rk}(M)+\mathrm{rk}(X').$$
\end{itemize}
\end{defi}

\begin{cor}\label{c1, RAIR, 3.6} For an additive generator $X$  in $\modu(\Lambda),$ $F:=F_{\mathrm{add}(X)}$ and $\Gamma:=\mathrm{End}_\Lambda(X)^{op},$ the following statements hold true.
\begin{itemize}
\item[$\mathrm{(a)}$]  Let $(M,X')$ be an $F$-presilting $(support$ $F$-silting$)$ pair in $\modu(\Lambda).$ Then the pair 
{\small $(\mathrm{Hom}_\Lambda(X,M),\mathrm{Hom}_\Lambda(X,X'))$} is $\tau$-rigid $(support$ $\tau$-$tilting)$ in $\modu(\Gamma).$
\item[$\mathrm{(b)}$] The map $(M,X')\mapsto(\mathrm{Hom}_\Lambda(X,M),\mathrm{Hom}_\Lambda(X,X')),$ between the set of $F$-presilting pairs in $\modu(\Lambda)$ and the $\tau$-rigid pairs in $\modu(\Gamma),$ induces an injective map between their respective iso-classes.
\end{itemize}
\end{cor}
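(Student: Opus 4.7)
The plan is to push each structural datum across $e_X:=\Hom_\Lambda(X,-)$, using the results already in place. For part (a), I would first invoke the equivalence (a)$\Leftrightarrow$(d) of Proposition \ref{RAIR, 3.5}, which immediately gives that $N:=\Hom_\Lambda(X,M)$ is $\tau$-rigid in $\modu(\Gamma)$. For the second coordinate, $Q:=\Hom_\Lambda(X,X')$ is projective by Proposition \ref{ASS06, VI.3.1} (b) since $X'\in\add(X)$, and Proposition \ref{ASS06, VI.3.1} (a) yields $\Hom_\Gamma(Q,N)\simeq\Hom_\Lambda(X',M)=0$; so $(N,Q)$ is a $\tau$-rigid pair.

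For the support case, I then need to transport the rank identity $\mathrm{rk}(X)=\mathrm{rk}(M)+\mathrm{rk}(X')$ to $\mathrm{rk}(\Gamma)=\mathrm{rk}(N)+\mathrm{rk}(Q)$. Since $e_X$ restricts to an equivalence $\add(X)\to\proj(\Gamma)$ by Proposition \ref{ASS06, VI.3.1} (b), indecomposables and iso-classes are preserved there, yielding both $\mathrm{rk}(X)=\mathrm{rk}(\Gamma)$ and $\mathrm{rk}(X')=\mathrm{rk}(Q)$ at once. The remaining identity $\mathrm{rk}(M)=\mathrm{rk}(N)$ uses the full-faithfulness of $e_X$ on all of $\modu(\Lambda)$ (Lemma \ref{l2, NAS81, 5.1} (a)): for a decomposition $M=\oplus_{i=1}^r M_i$ into pairwise non-isomorphic indecomposables, the isomorphism $\End_\Gamma(e_X(M_i))\simeq\End_\Lambda(M_i)$ preserves locality, so each $e_X(M_i)$ is indecomposable, and faithfulness then reflects the non-isomorphism of these summands.

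For part (b), the fact that the assignment descends to an injection between iso-classes is the general principle that a full and faithful functor reflects isomorphisms, applied to $e_X$ on $\modu(\Lambda)$ (via Lemma \ref{l2, NAS81, 5.1} (a)) and to its restriction to $\add(X)$ (via Proposition \ref{ASS06, VI.3.1} (b)): from $\Hom_\Lambda(X,M)\simeq\Hom_\Lambda(X,M')$ and $\Hom_\Lambda(X,X')\simeq\Hom_\Lambda(X,X'')$ one lifts the isomorphisms to $M\simeq M'$ and $X'\simeq X''$ by fullness and checks that the lifts are mutually inverse by faithfulness.

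The only mildly non-routine step is the transfer of rank from $M$ to $\Hom_\Lambda(X,M)$, and even this reduces, by full-faithfulness, to the well-known fact that an additive full and faithful functor preserves indecomposability via local endomorphism rings. Every other piece of the argument is a one-line consequence of the previously established lemmas and propositions.
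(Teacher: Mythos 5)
Your proof is correct and follows essentially the same route as the paper: both parts rest on Proposition \ref{RAIR, 3.5} for $\tau$-rigidity of $\Hom_\Lambda(X,M)$, on the equivalence $\add(X)\simeq\proj(\Gamma)$ and the full faithfulness of $\Hom_\Lambda(X,-)$ for the rank count and the $\Hom$-vanishing, and on the principle that a fully faithful functor reflects isomorphisms for injectivity in (b). The only difference is that you spell out the preservation of indecomposability via local endomorphism rings, which the paper leaves implicit.
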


\begin{proof} Let $(M,X')$ be an $F$-presilting pair in $\modu(\Lambda)$ and let
$$\Phi(M,X'):=(\mathrm{Hom}_\Lambda(X,M),\mathrm{Hom}_\Lambda(X,X')).$$
Then, by Proposition \ref{RAIR, 3.5} and Lemma \ref{l2, NAS81, 5.1} (a), it follows that the pair $\Phi(M,X')$ is $\tau$-rigid  in 
$\modu(\Gamma).$ Now, assume that $(M,X')$ is support $F$-tilting. Then, by Lemma \ref{l2, NAS81, 5.1} (a) and  Proposition \ref{ASS06, VI.3.1} (b),  we get the equalities
$$\mathrm{rk}(\mathrm{Hom}_\Lambda(X,M))+\mathrm{rk}(\mathrm{Hom}_\Lambda(X,X'))=\mathrm{rk}(M)+\mathrm{rk}(X')=\mathrm{rk}(X)=\mathrm{rk}(\Gamma).$$ Therefore the pair $\Phi(M,X')$ is support $\tau$-tilting in $\modu(\Gamma);$ proving (a).

In order to prove (b), let $(M',X'')$ be an $F$-presilting pair in $\modu(\Lambda).$ Note that if $(M,X')\cong(M',X'')$ then 
$\Phi(M,X')\cong\Phi(M',X'')$. Hence $\Phi$ induces a well-defined map between the respective iso-classes. We assert that this map is 
injective. Indeed, suppose that $\Phi(M,X')\cong\Phi(M',X'')$.  In particular
$$(*)\quad\mathrm{Hom}_\Lambda(X,M)\cong\mathrm{Hom}_\Lambda(X,M')\;\text{ and }\;\mathrm{Hom}_\Lambda(X,X')\cong\mathrm{Hom}_\Lambda(X,X'').$$
On the other hand, from Lemma \ref{l2, NAS81, 5.1} (a), we have that 
$$\begin{matrix}(**)&\mathrm{Hom}_\Lambda(M,M')\cong\mathrm{Hom}_\Gamma(\mathrm{Hom}_\Lambda(X,M),\mathrm{Hom}_\Lambda(X,M'))&and\\
&\mathrm{Hom}_\Lambda(X',X'')\cong\mathrm{Hom}_\Gamma(\mathrm{Hom}_\Lambda(X,X'),\mathrm{Hom}_\Lambda(X,X''))
\end{matrix}$$
Therefore, by applying $(*)$ to the pair of isomorphisms in $(**),$ we conclude that $M\cong M'$ and $X'\cong X''$ and thus 
$(M,X')\cong(M',X'').$
\end{proof}

\begin{defi} Let $0\neq M\in \modu(\Lambda)$ and $M=\oplus_{i=1}^n M^{m_i}_i$ be its decomposition into pairwise non isomorphic indecomposable 
$\Lambda$-modules $M_1,M_2,\cdots, M_n.$  The basic part of $M$ is the $\Lambda$-module $b(M):=\oplus_{i=1}^n M_i.$
\end{defi}

\begin{lem}\label{basicM} For an additive generator $X$  in $\modu(\Lambda)$ and  $F:=F_{\mathrm{add}(X)},$ the following statements hold true.
\begin{itemize}
\item[$\mathrm{(a)}$] If $M\in\modu(\Lambda)$ is $\gen_F$-minimal, then $b(M)$ is $\gen_F$-minimal.
\item[$\mathrm{(b)}$] $\gen_F(M)=\gen_F(b(M))$ $\forall\,M\in\modu(\Lambda).$
\item[$\mathrm{(c)}$] Let $M,N\in\modu(\Lambda)$ and  $\Gamma:=\End_\Lambda(X)^{op}.$ If $\gen_F(M)=\gen_F(N),$  then $\gen(\Hom_\Lambda(X,M))=\gen(\Hom_\Lambda(X,N))$ in $\modu(\Gamma).$
\end{itemize}
\end{lem}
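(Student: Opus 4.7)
My plan is to treat the three parts separately, since they rely on different techniques. Part (b) is essentially a bookkeeping remark: since the iso-classes of indecomposable summands of $M$ and of $b(M)$ coincide, $\add(M)=\add(b(M))$, and an application of Lemma \ref{Sa12, 1.10} (c) gives $\gen_F(M)=\gen_F(\add(M))=\gen_F(\add(b(M)))=\gen_F(b(M))$.

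For part (a), I would argue as follows. Write $M=\oplus_{i=1}^n M_i^{m_i}$ with the $M_i$ pairwise non-isomorphic indecomposables, so that $b(M)=\oplus_{i=1}^n M_i$, which is nonzero since $M\neq 0$. Take any $N\mid b(M)$ with $N\in\gen_F(b(M)/N)$; the goal is to show $N=0$. Necessarily $N=\oplus_{i\in S}M_i$ for some $S\subseteq\{1,\ldots,n\}$ and $b(M)/N\cong\oplus_{i\notin S}M_i$. Since each $m_i\geq 1$, I can extract one copy of $M_i$ for each $i\in S$ out of $M$ to obtain a decomposition $M\cong N\oplus K$ with $K=(\oplus_{i\in S}M_i^{m_i-1})\oplus(\oplus_{i\notin S}M_i^{m_i})$. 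The key observation is that $\add(b(M)/N)\subseteq\add(K)=\add(M/N)$, so by Lemma \ref{Sa12, 1.10} (c) we obtain $\gen_F(b(M)/N)\subseteq\gen_F(M/N)$, and therefore $N\in\gen_F(M/N)$. The $\gen_F$-minimality of $M$ then forces $N=0$.

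For part (c), I would use symmetry: it suffices to show $\gen(\Hom_\Lambda(X,M))\subseteq\gen(\Hom_\Lambda(X,N))$. From $\gen_F(M)=\gen_F(N)$ I extract $M\in\gen_F(N)$, hence an $F$-exact sequence $0\to K\to N^k\to M\to 0$. Applying $\Hom_\Lambda(X,-)$ and invoking Proposition \ref{p1, NAS81, 5.1} (b), the induced map $\Hom_\Lambda(X,N)^k\to\Hom_\Lambda(X,M)$ is an epimorphism in $\modu(\Gamma)$, so $\Hom_\Lambda(X,M)\in\gen(\Hom_\Lambda(X,N))$; taking $\gen$ of both sides yields the inclusion. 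None of the three parts is technically deep, but the most delicate step is the choice of complement in (a): one must single out $N$ inside $M$ rather than inside $b(M)$, so that the hypothesis $N\in\gen_F(b(M)/N)$ can be transferred to $N\in\gen_F(M/N)$ and the $\gen_F$-minimality of $M$ can actually be invoked.
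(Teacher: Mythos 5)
Your proposal is correct and follows essentially the same route as the paper's proof: part (b) via $\add(M)=\add(b(M))$ and Lemma \ref{Sa12, 1.10} (c), part (a) by observing that $b(M)/N$ is (up to $\add$) a direct summand of $M/N$ so that $N\in\gen_F(M/N)$ and the $\gen_F$-minimality of $M$ applies, and part (c) by transporting an $F$-epic $N^k\to M$ through $\Hom_\Lambda(X,-)$ using Proposition \ref{p1, NAS81, 5.1} (b). The only cosmetic difference is that in (c) you show $\Hom_\Lambda(X,M)\in\gen(\Hom_\Lambda(X,N))$ and invoke transitivity of $\gen$, whereas the paper composes epimorphisms for an arbitrary element of the class; the content is identical.
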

\begin{proof} (a) Assume that $M$ is $\gen_F$-minimal.  Let $N\mid b(M)$ be such that\\ $N\in\gen_F(b(M)/N).$ We must show that $N=0.$  Indeed, since $b(M)/N$ is a 
direct summand of $M/N,$ we get from Lemma \ref{Sa12, 1.10} (b) that $N\in\gen_F(b(M)/N)\subseteq\gen_F(M/N),$  and thus 
$N=0$ since $M$ is $\gen_F$-minimal.
\

(b) It follows from Lemma \ref{Sa12, 1.10} (c) since $\add(b(M))=\add(M).$
\

(c) Let $H\in \gen(\Hom_\Lambda(X,N)).$ Then, there is an epimorphism $$p: \Hom_\Lambda(X,N^n)\to H$$ in $\modu(\Gamma).$ Since $\mathrm{gen}_F(M)=\mathrm{gen}_F(N),$ there is an $F$-epic $$q: M^m\to N^n.$$ Thus, by Proposition \ref{p1, NAS81, 5.1},  it follows that 
$$\mathrm{Hom}_\Lambda(X,q): \mathrm{Hom}_\Lambda(X,M^m)\to\mathrm{Hom}_\Lambda(X,N^n)$$ is an epimorphism in $\modu(\Gamma)$ and so 
$p\mathrm{Hom}_\Lambda(X,q): \mathrm{Hom}_\Lambda(X,M)^m\to H$ is also
 an epimorphism, proving that  $\mathrm{gen}(\mathrm{Hom}_\Lambda(X,N))\subseteq\mathrm{gen}(\mathrm{Hom}_\Lambda(X,M)).$ In a similar way, it can be shown that  $\mathrm{gen}(\mathrm{Hom}_\Lambda(X,M))\subseteq\mathrm{gen}(\mathrm{Hom}_\Lambda(X,N)).$
\end{proof}

\begin{pro} \label{AuxP}  Let $X$  be an additive generator in $\modu(\Lambda),$  $F:=F_{\mathrm{add}(X)}$ and $\T\subseteq\modu(\Lambda)$ be a non zero, preenveloping and $F$-torsion class. Then, there is some $M\in\modu(\Lambda)$ which is basic, 
$\T=\gen_F(M)$ and $M\in{}^{{}_F\perp_1}\T.$
\end{pro}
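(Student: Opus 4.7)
The plan is to assemble the result by chaining together the earlier preparatory lemmas (Proposition \ref{NAS80, 4.6}(a), Proposition \ref{p1.1, NAS81, 5.1}, Lemma \ref{basicM}, and Proposition \ref{NAS81, 5.1}). The idea is: start from any module whose $\gen_F$ class equals $\T$, shrink it to be $\gen_F$-minimal, pass to its basic part, and then use minimality to get the $F$-orthogonality.

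First, since $\T$ is $F$-torsion and preenveloping, Proposition \ref{NAS80, 4.6}(a) produces some $T\in\T$ with $\T=\gen_F(T)$; explicitly, $T$ is obtained from a $\T$-preenvelope $f:X\to T$. I would check at the outset that $T\neq 0$, which follows from $\T\neq 0$: if $Y\in\T$ is nonzero, then because $X$ is an additive generator we get an epimorphism $X^m\to Y$ and this epimorphism factors through $f^m:X^m\to T^m$, forcing $T\neq 0$. Second, apply Proposition \ref{p1.1, NAS81, 5.1} to $T$: this yields a direct summand $T'\mid T$ which is $\gen_F$-minimal and satisfies $\gen_F(T')=\gen_F(T)=\T$.

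Third, set $M:=b(T')$, the basic part of $T'$. By Lemma \ref{basicM}(a) the $\gen_F$-minimality of $T'$ is inherited by $M$, and by Lemma \ref{basicM}(b) one has $\gen_F(M)=\gen_F(T')=\T$. By construction $M$ is basic and nonzero (since $T'\neq 0$). Finally, since $\gen_F(M)=\T$ is an $F$-torsion class and $M$ is $\gen_F$-minimal, Proposition \ref{NAS81, 5.1} gives $M\in{}^{{}_F\perp_1}\gen_F(M)={}^{{}_F\perp_1}\T$, as required.

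I do not anticipate a serious obstacle here; the work has all been done in the preceding lemmas. The only subtle point worth writing out carefully is the verification that the representative $T$ coming from Proposition \ref{NAS80, 4.6}(a) is nonzero, so that the reduction to a $\gen_F$-minimal summand via Proposition \ref{p1.1, NAS81, 5.1} (whose statement explicitly requires $M\neq 0$) is applicable; everything else is a clean concatenation of the cited results.
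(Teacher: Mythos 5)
Your proposal is correct and follows exactly the paper's own argument: Proposition \ref{NAS80, 4.6}(a) to realize $\T=\gen_F(T)$, Proposition \ref{p1.1, NAS81, 5.1} together with Lemma \ref{basicM} to replace $T$ by a basic $\gen_F$-minimal $M$ with $\gen_F(M)=\T$, and Proposition \ref{NAS81, 5.1} to conclude $M\in{}^{{}_F\perp_1}\T$. Your explicit verification that $T\neq 0$ is a welcome elaboration of a step the paper merely asserts.
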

\begin{proof} By Proposition \ref{NAS80, 4.6} (a), there is some $T\in\modu(\Lambda)$ such that $\T=\gen_F(T).$ Note that $T\neq 0$ since $\T\neq 0.$ Then, by Proposition \ref{p1.1, NAS81, 5.1} and Lemma \ref{basicM}, there is some $M\in\modu(\Lambda)$ which is basic,  
$\gen_F$-minimal and $\gen_F(M)=\gen_F(T)=\T.$ Finally, from Proposition \ref{NAS81, 5.1}, we get that $M\in{}^{{}_F\perp_1}\T.$
\end{proof}

The following is an example of an $F$-presilting $\Lambda$-module $M$ such that $\mathrm{gen}_F(M)$ is not preenveloping. Note that, for a 
$\tau$-rigid $\Lambda$-module $N,$ it is known that $\mathrm{gen}(N)$ is a torsion class which is functorially finite \cite[Theorem 5.10]{AS81}.  In the example below, we use the description of the Kronecker algebra category $\modu(\Lambda)$ given in \cite[Section VIII.7]{ARS95}. 

\begin{ex}\label{Kronecker} Let $k$ be a field, $\xymatrix{Q:=\circ^1 \ar@<1ex>[r]^\alpha \ar@<-1ex>[r]_\beta& \circ^2}$ and $\Lambda=kQ.$ Consider the following representations on $Q:$ $\xymatrix{J_n:=k^{n+1}\ar@<1ex>[rrr]^{(1_{k^n},0)}\ar@<-1ex>[rrr]_{(0,1_{k^n})}&&&k^n}$ $n\in\mathbb{N}^+;$
$$\xymatrix{R_{p,n}:=k^n\ar@<1ex>[rrr]^{1_{k^n}}\ar@<-1ex>[rrr]_{J_{n,p}}&&&k^n\quad n\in\mathbb{N}^+,\;p\in\mathbb{P}^1(k);}$$
$X:=\Lambda\oplus R_{(1,1),1}$ and $F:=F_{\mathrm{add}(X)}.$ For each $n\in\mathbb{N},$ we have the following exact sequence 
$0\to\mathrm{S(2)}\to R_{(1,1),1}^{n+1}\overset{g_n}{\to}J_n\to0$  in $\modu(\Lambda).$ Since $g_n$ is an $\mathrm{add}(R_{(1,1),1})$-precover, this sequence is $F$-exact. Thus $$\mathcal{Y}:=\mathrm{gen}_F(R_{(1,1),1})=\widetilde{\mathrm{inj}}(\Lambda)\oplus\mathrm{add}(R_{(1,1),1}),$$
where $\widetilde{\mathrm{inj}}(\Lambda)$ is the pre-injective component of $\modu(\Lambda).$
\

We assert that $R_{(1,0),1}$ does not have an $\mathcal{Y}$-preenvelope. Suppose there exists an $\mathcal{Y}$-preenvelope $f: R_{(1,0),1}\to Y$. Since $\mathrm{Hom}_\Lambda(R_{(1,0),1},R_{(1,1),1})=0,$ it follows that $Y\in\widetilde{\mathrm{inj}}(\Lambda).$ Thus,  there exists $r\in\mathbb{N}^+$ such that $\mathrm{Hom}_\Lambda(Y, J_r)=0.$ Consider the following exact sequence $0\to\mathrm{S(2)}\to R_{(1,0),1}^{r+1}\overset{p}{\to}J_r\to0$ in $\modu(\Lambda).$ Since $\oplus_{i=1}^{r+1}f: R_{(1,0),1}^{r+1}\to Y^{r+1}$ is an $\mathcal{Y}$-preenvelope, there exists $g: Y^{r+1}\to J_r$ such that $p=g(\oplus_{i=1}^{r+1}f)=0,$ which is a contradiction. Therefore $\mathcal{Y}$ is not a preenveloping class.
\end{ex}

Now, we are ready to give a proof of {\bf Theorem \ref{teo-princip2}}:

\begin{proof}  Let $M\in\modu(\Lambda)$ be a special $F$-presilting  such that $\gen_F(M)$ is preenveloping. Then, by Proposition \ref{RAIR, 3.5} and Lemma 
\ref{l,RAIR, 3.4}, we get that $\gen_F(M)$ is $F$-torsion and thus the map 
$$\mathrm{F}\text{-}\mathrm{presilt}\text{-}\mathrm{esp}(\Lambda)\to \mathrm{F}\text{-}\mathrm{tor}\text{-}\mathrm{preen}(\Lambda),\; [M]\mapsto\mathrm{gen}_F(M)$$
is well defined. Let us show that this map is injective. Indeed, let $M$ and $N$ be special $F$-presilting $\Lambda$-modules such that 
$\mathrm{gen}_F(M)=\mathrm{gen}_F(N).$ Then, by Lemma \ref{basicM} (c), we have that $\mathbb{P}(\mathrm{gen}(\mathrm{Hom}_\Lambda(X,M)))=\mathbb{P}(\mathrm{gen}(\mathrm{Hom}_\Lambda(X,N))).$ Hence by \cite[Theorem 5.10]{AS81}, it follows that 
$\Hom_\Lambda(X,M)\;|\;\mathbb{P}(\mathrm{gen}(\mathrm{Hom}_\Lambda(X,N))).$ Let $M'|M$ be indecomposable. Then $\Hom_\Lambda(X,N\oplus M')\;|\;\mathbb{P}(\mathrm{gen}(\mathrm{Hom}_\Lambda(X,N))),$ and since $N$ is special $F$-presilting, it follows that $M'\in\mathrm{add}(N).$ Thus 
$\add(M)\subseteq \add(N).$ Analogously, it can be shown that $\add(N)\subseteq \add(M)$ and then $M\simeq N$ since $M$ and $N$ are basic.
\

Let us show now that the above correspondence is surjective. Consider $\mathcal{T}\in\mathrm{F}$-$\mathrm{tor}$-$\mathrm{preen}(\Lambda)$ and $\Gamma:=\End_\Lambda(X)^{op}.$ Then, by Proposition \ref{AuxP} and Proposition \ref{RAIR, 3.5}, there is some basic $F$-presilting $T\in\T$ such that $\gen_F(T)=\T$ and  $\Hom_\Lambda(X,T)$ is $\tau$-rigid in 
$\modu(\Gamma).$  Hence, from \cite[Theorem 5.10]{AS81},  it follows that $\mathrm{gen}(\mathrm{Hom}_\Lambda(X,T))$ is a functorially finite torsion class in 
$\modu(\Gamma)$ and 
$$(i):\quad \Hom_\Lambda(X,T)\mid \mathbb{P}(\gen(\mathrm{Hom}_\Lambda(X,T))).$$
Furthermore, by  \cite[Theorem 2.7]{AIR14}, we get that 
\begin{center}
$(ii):\quad \mathbb{P}(\gen(\mathrm{Hom}_\Lambda(X,T)))$ is $\tau$-rigid  in $\modu(\Gamma);$ and   
\end{center}
\begin{center}
$(iii):\quad \mathrm{gen}(\mathbb{P}(\mathrm{gen}(\mathrm{Hom}_\Lambda(X,T))))=\mathrm{gen}(\mathrm{Hom}_\Lambda(X,T)).$
\end{center}
By $(i),$ there exists  some $N\in\modu(\Lambda)$ which is zero or basic and satisfies the following two conditions:
\begin{itemize}
\item[(C1)] $\add(T)\cap\add(N)=0$ and 
$\mathrm{Hom}_\Lambda(X,T\oplus N)\;|\;\mathbb{P}(\gen(\mathrm{Hom}_\Lambda(X,T))),$
\item[(C2)] $\forall$ $K\in\modu(\Lambda)-\{0\}$  with $\add(K)\cap\add(T\oplus N)=0$, we have that 
 \begin{center}
 $\mathrm{Hom}_\Lambda(X,T\oplus N\oplus K)\notin\mathrm{add}(\mathbb{P}(\gen(\mathrm{Hom}_\Lambda(X,T))).$
 \end{center}
\end{itemize}
 
\noindent Consider $M:=T\oplus N.$ Note that $M$ is basic. Moreover, by (C1) and $(ii),$ we get that $\Hom_\Lambda(X,M)$ is $\tau$-rigid in $\modu(\Gamma).$ Hence, by Proposition \ref{RAIR, 3.5}, $M$ is $F$-presilting. 
 \
 
 Let us show that $\mathcal{T}=\gen_F(M).$

$(\subseteq)$ Since $M=T\oplus N$ and $\mathrm{gen}_F(T)=\mathcal{T},$ it follows that $\mathcal{T}\subseteq\mathrm{gen}_F(M).$

$(\supseteq)$ Let $p: M^n\to L$ be an $F$-epic. By Proposition \ref{p1, NAS81, 5.1} (b),  $\mathrm{Hom}_\Lambda(X,p):\Hom_\Lambda(X,M)^n\to\Hom_\Lambda(X,L)$ is an epimorphism in 
$\modu(\Gamma)$ and then, by (C1) and $(iii),$ we get 
$$\Hom_\Lambda(X,L)\in\gen(\mathbb{P}(\mathrm{gen}(\mathrm{Hom}_\Lambda(X,T))))=\mathrm{gen}(\mathrm{Hom}_\Lambda(X,T)).$$
In particular, there is an epimorphism $\Hom_\Lambda(X,T^m)\to \Hom_\Lambda(X,L).$ Therefore, by Lemma \ref{l2, NAS81, 5.1} (a) and Proposition \ref{p1, NAS81, 5.1} (b), there is an $F$-epic $q: T^m\to L;$ proving that $L\in\mathrm{gen}_F(T)=\mathcal{T}.$

Once we have that $\gen_F(M)=\T=\gen_F(T),$ by Lemma \ref{basicM} (c), we obtain that $\gen(\Hom_\Lambda(X,M))=\gen(\Hom_\Lambda(X,T)).$ Then, from (C2), we conclude that $M$ is $F$-special; proving the result.
\end{proof}

In what follows, we give an example where we can apply Theorem \ref{teo-princip2} but not the Theorem \ref{NHo83}.

\begin{ex}\label{Ejem4} Let $k$ a field, $\xymatrix{Q:=\circ^1\ar[r]^\alpha&\circ^2\ar[r]^\beta&\circ^3}$, $\Lambda=kQ/R^2$ and\\ $X:=\Lambda\oplus\mathrm{S(2)}.$ Then,  the Auslander-Reiten quiver of $\modu(\Lambda)$ is as follows
$$\xymatrix{&&&\mathrm{P(1)}\ar[dr]\\
\mathrm{S(3)}\ar[dr]&&\mathrm{S(2)}\ar[ru]&&\mathrm{S(1)}\\
&\mathrm{P(2)}\ar[ur]}$$
Consider the class $\T:=\gen(P(1)).$ Let us show the following:
\begin{enumerate}
\item $\gldim_F(\Lambda)=1.$ In particular, $\Lambda$ is $F$-admissible.\\
{\rm Inded, since $\pd_F(X)=0$ and $\modu(\Lambda)=\add(X\oplus S(1)),$ it is enough to show that $\pd_F(S(1))=1.$ But, this follows from the Auslander-Reiten quiver since $0\to S(2)\to P(1)\to S(1)\to 0$ is an $F$-exact sequence.}

\item $\T$ is $F$-torsion and preenveloping but not $F$-preenveloping. In particular, from Theorem \ref{NHo83}, $\T\neq \gen_F(T)$ for any $F$-tilting $T\in\modu(\Lambda).$ However, by Theorem \ref{teo-princip2}, there is some $M\in\mathrm{F}\text{-}\mathrm{presilt}\text{-}\mathrm{esp}(\Lambda)$ with $\T=\gen_F(M).$\\
{\rm Indeed, note firstly that $\T=\add(P(1)\oplus S(1))=\gen_F(P(1))$ and thus $\T$ is functorially finite. Moreover, since   $P(1)$ is projective, by Lemma \ref{NAS81, 5.10}, we get that $\T$ is $F$-torsion. On the other hand, by Lemma \ref{FP-FI} (b), 
$\mathcal{I}(F)=\add(S(3))\oplus\inj(\Lambda)\not\subseteq \gen_F(P(1)).$ Thus, by Lemma \ref{FP-FI} (c), we get that $\T=\gen_F(P(1))$ is not $F$-preenveloping.}

\item $\T\neq \gen_F(M)$ for any support $F$-silting pair $(M, X').$\\
{\rm   Indeed, the only basic $F$-presilting pairs $(M,X')$ such that $\mathrm{gen}_F(M)=\mathrm{gen}_F(\mathrm{P(1)})$ are the following:
\begin{center}
$(\mathrm{P(1)},\mathrm{S(3)})$, $(\mathrm{P(1)\oplus S(1)},\mathrm{S(3)})$, $(\mathrm{P(1)},0)$ and $(\mathrm{P(1)\oplus S(1)},0).$
\end{center}
 Note that none of the above pairs is support $F$-silting
since $\mathrm{rk}(M)+\mathrm{rk}(X')\leq 3$ and $\mathrm{rk}(X)=4.$}
\end{enumerate}
\end{ex}

\bigskip

\footnotesize

\vskip3mm \noindent Luis Mart\'inez Gonz\'alez\\ Instituto de Matem\'aticas\\ Universidad Nacional Aut\'onoma de M\'exico.\\ 
Circuito Exterior, Ciudad Universitaria\\
C.P. 04510, M\'exico, D.F. MEXICO.\\ {\tt luis.martinez@matem.unam.mx}

\vskip3mm \noindent Octavio Mendoza Hern\'andez\\ Instituto de Matem\'aticas\\ Universidad Nacional Aut\'onoma de M\'exico.\\ 
Circuito Exterior, Ciudad Universitaria\\
C.P. 04510, M\'exico, D.F. MEXICO.\\ {\tt omendoza@matem.unam.mx}

\begin{thebibliography}{20}
\bibitem[AIR14]{AIR14} T. Adachi, O. Iyama and I. Reiten. \emph{$\tau$-tilting theory}. Composotio Math. 150 (2014), 415-452. http://doi.org/10.1112/S0010437X13007422. 

\bibitem[AI12]{AI12} T. Aihara and O. Iyama. \emph{Silting mutation in triangulated categories}. J. Lond. Math. Soc. 85 (2012), 633-668.  https://doi.org/10.1112/jlms/jdr055.

\bibitem[AMV16]{AMV16} L. Angeleri H\''ugel, F. Marks, J. Vit\'oria. \emph{Silting Modules}.  
International Mathematics Research Notices, Volume 2016, Issue 4, 2016, Pages 1251-1284, https://doi.org/10.1093/imrn/rnv191.


\bibitem[ABM98]{ABM98} I. Assem, A. Beligiannis, N. Marmaridis. \emph{Rigt Triangulated Categories with Right-Equivalences.} Can. Math. Soc. Conf. Proc. 24 (1989), 17-37.

\bibitem[ASS06]{ASS06} I. Assem, D. Simson and A. Skowronski. \emph{Elements of the representation theory of associative algebras.} Vol. 65 (Cambridge University Press, Cambridge, 2006). https://doi.org/10.1017/CBO9780511614309.


\bibitem[AR75]{AR75} M. Auslander and I. Reiten. \emph{Representation theory of Artin algebras III: almost split sequences.} Comm. Algebra 3 (1975), 239-294. https://doi.org/10.1080/00927877508822046.

\bibitem[ARS95]{ARS95} M. Auslander, I. Reiten and S. O. Smal\o. \emph{Representation theory of Artin algebras.} Cambridge Studies in Advanced Mathematics, vol. 36 (Cambridge University Press, Cambridge, 1995). https://doi.org/10.1017/CBO9780511623608.

\bibitem[AS80]{AS80} M. Auslander and S. O. Smal\o. \emph{Preprojective modules over artin algebras.} J. Algebra 66 (1980), 61-122. https://doi.org/10.1016/0021-8693(80)90113-1.

\bibitem[AS81]{AS81} M. Auslander and S. O. Smal\o, \emph{Almost split sequences in subcategories.} J. Algebra 69 (1981), 426-454. https://doi.org/10.1016/0021-8693(81)90214-3.

\bibitem[AS93]{AS93}M. Auslander, \O. Solberg. \emph{Relative Homology and Representation Theory I, Relative Homology and Homologically Finite Subcategories}. Comm. Alg. 21 (9), 2995-3031, 1993. https://doi.org/10.1080/00927879308824717.

\bibitem[ASII93]{AS93II}M. Auslander, \O. Solberg. \emph{Relative Homology and Representation Theory II, Relative Cotilting Theory}. Comm. Alg. 21 (9), 3033-3079, 1993. https://doi.org/10.1080/00927879308824718.

\bibitem[ASIII93]{AS93III}M. Auslander, \O. Solberg. \emph{Relative Homology and Representation Theory III, Cotilting Modules and Wedderburn Correspondence}. Comm. Alg. 21 (9), 3081-3097, 1993. https://doi.org/10.1080/00927879308824719.

\bibitem[BB80]{BB80} S. Brenner, M. C. R Butler. \emph{Generalizations of the Bernstein-Gelfand-Ponomarev reflection functors}. In: Dlab V., Gabriel P. (eds) Repr. Theo. II. Lect. Not. in Math, vol 832. Springer, Berlin, Heidelberg. https://doi.org/10.1007/BFb0088461.

\bibitem[Bo81]{Bo81} K. Bongartz. \emph{Tilted algebras.} In Proc. ICRA III (Puebla 1980), Lecture Notes in Mathematics, vol. 903 (Springer, New York, 1981), 26-38. https://doi.org/10.1007/BFb0092982.

\bibitem[Ha88]{Ha88} D. Happel. \emph{Triangulated categories in the Representation Theory of Finite Dimensional Algebras}. LMS, 119, Cambridge University Press, 208 pages, (1988).

\bibitem[Ho82]{Ho82} M. Hoshino. \emph{Tilting modules and torsion theories.}  Bull. London Math. Soc. 14 (1982), 334–336.

\bibitem[Ke64]{Ke64} G. M. Kelly. \emph{On the radical of a category.} J. Austral. Math. Soc., 4 (1962), 299-307. https://doi.org/10.1017/S1446788700024071.

\bibitem[LM17]{LM17} M. Lanzilotta, O. Mendoza. \emph{Relative Igusa-Todorov functions and relative homological algebra}. Algebras and Representation Theory, vol. 20, number 3, (2017), 765-802.


\bibitem[Mo58]{Mo58} K. Morita.  \emph{Duality for Modules and its Applications to the Theory of Rings with Minimum Condition}. Sci. Rep. of the Tokyo Kyoiku Daigaku, Section A, vol. 6, No. 150, pp. 83-142. https://www.jstor.org/stable/43698445.

\bibitem[S19]{S19} V. Santiago. \emph{Stratifying systems for exact categories}. Glasgow Math. J. 61 (2019), 501-521. 

\bibitem[Sm84]{Sm84} S. O. Smal\o. \emph{Torsion theory and tilting modules.} Bull. Lond. Math. Soc. 16 (1984), 518-522.  https://doi.org/10.1112/blms/16.5.518.

\bibitem[Ve96]{Verdier} J. L. Verdier. \emph{Des categories Derivees, Des categories Abeliennes.} Societe Mathematique de France, Asterisque, 239, 256 pages, (1996).\end{thebibliography}
\end{document}